\definecolor{qqqqcc}{rgb}{0.0,0.0,0.8}
\definecolor{qqqqff}{rgb}{0.0,0.0,1.0}
\definecolor{uuuuuu}{rgb}{0.26666666666666666,0.26666666666666666,0.26666666666666666}
\theoremstyle{plain}
\newtheorem{thm}{Theorem}
\newtheorem{prop}[thm]{Proposition}
\newtheorem{cor}[thm]{Corollary}
\newtheorem{lem}[thm]{Lemma}
\newtheorem{prob}[thm]{Problem}
\theoremstyle{definition}
\newtheorem{exa}[thm]{{\it Example}}
\theoremstyle{remark}
\newtheorem{rem}[thm]{{\it Remark}}
\newtheorem*{rem*}{{\it Remark}}
\DeclareMathOperator{\dess}{{\mathsf{Des}}}
\DeclareMathOperator{\dzii}{{\mathsf{Chi}}}
\DeclareMathOperator{\koo}{{\mathsf{root}}}
\DeclareMathOperator{\paa}{{\mathsf{par}}}
\newcommand*{\card}[1]{\mathrm{card}(#1)}
\newcommand*{\desn}[2]{{\dess^{\langle#1\rangle}(#2)}}
\newcommand*{\des}[1]{{\dess(#1)}}
\newcommand*{\dzD}[1]{{\EuScript D}\big(#1\big)}
\newcommand*{\dz}[1]{{\EuScript D}(#1)}
\newcommand*{\dzi}[1]{\dzii(#1)}
\newcommand*{\dzin}[2]{\dzii^{\langle#1\rangle}(#2)}
\newcommand*{\lambdab}{{\boldsymbol\lambda}}
\newcommand*{\mub}{{\boldsymbol\mu}}
\newcommand*{\betab}{{\boldsymbol\beta}}
\newcommand*{\mphi}{M_{\hat\varphi}}
\newcommand*{\mpsi}{M_{\hat\psi}}
\newcommand*{\pa}[1]{\paa(#1)}
\newcommand*{\slam}{S_{\boldsymbol \lambda}}
\newcommand*{\smalloplus}{\raise0pt\hbox{$\scriptscriptstyle \oplus$}}
\newcommand*{\tcal}{{\mathscr T}}
\newcommand*{\gcal}{{\mathscr S}}
\newcommand*{\wn}[1]{\operatorname{int}(#1)}
\newcommand*{\pth}{\mathscr{P}}
\newcommand*{\multib}{\mathcal{M}(\tcal_\betab,\lambdab)}
\newcommand*{\Vsf}{\mathsf{V}}
\newcommand*{\bpe}[1]{\mathsf{bpe}{(#1)}}
\def\pp{\EuScript{P}}
\newcommand{\Le}{\leqslant}
\newcommand{\Ge}{\geqslant}
\def\is#1#2{\langle#1,#2\rangle}
\def\N{\mathbb N}
\def\R{\mathbb{R}}
\def\C{\mathbb{C}}
\def\ee{\EuScript E}
\def\ff{\EuScript F}
\def\hh{\EuScript H}
\def\bsb{{\mathbf B}}
\DeclareMathAlphabet{\mathpzc}{OT1}{pzc}{m}{it}
\title[Analytic structure of weighted shifts on directed trees]{Analytic structure of weighted shifts on directed trees}
\author{P.\ Budzy{\'n}ski}
\author{P.\ Dymek}
\author{M.\ Ptak}
\address{Katedra Zastosowa\'n Matematyki, Uniwersytet Rolniczy w Krakowie, ul. Balicka 253c, 30-198 Krak\'ow, Poland}
    \email{piotr.budzynski@ur.krakow.pl}
    \email{piotr.dymek@ur.krakow.pl}
    \email{rmptak@cyf-kr.edu.pl}
\subjclass[2010]{Primary: 47B37; Secondary: 47L75}
\keywords{Weighted shift on directed tree, multiplication operator, bounded point evaluation}
\thanks{The research was supported by the Ministry of Science and Higher Education of Republic of Poland}
\begin{document}
\setstretch{1.2}

\begin{abstract}
We show that a weighted shift on a directed tree is related to a multiplier algebra of coefficients of analytic functions. We use this relation to study spectral properties of the operators in question.
\end{abstract}
\maketitle
\section{Introduction}
Theory of analytic functions plays a central role in operator theory. It has been a source of methods, examples and problems, and has led to numerous important results. The study of the unilateral shift owes much of its success to use of the Hardy space methods (see the monograph \cite{Nikolskii}). The same applies to Toeplitz and Hankel operators or composition operators. Weighted shifts have also been studied with analytic function theory approach. An excellent exposition of an interplay between weighted shift operators and analytic functions has been given by A. Shields in \cite{Shields}. Essential ingredients of the considerations therein were viewing a weighted shift operator as ``multipliation by $z$'' on a Hilbert space consisting of formal power series and showing that the structure of this space is in fact analytic. This enabled using multiplication operators and bounded point evaluations, tools known to be very powerful in variety of problems in operator theory. Other notable examples of using analytic models of operators can be found in \cite{1969-gel, 79-jew-lub, 1983-xia, 1989-sto-sza, 2008-jun-sto}.

In the present paper we study a class of (bounded) weighted shifts on directed trees focusing on analytic aspects of their theory. The class generalizes classical weighted shifts (see \cite{2012-j-j-s-mams}) and is a source of interesting examples (see e.g., \cite{2014-b-d-j-s-aaa,2014-b-j-j-s-jmaa,2012-j-j-s-jfa,2013-j-j-s-sm,2014-j-j-s-ams,2014-j-j-s-ieot,Pietrzycki,Trepkowski}). We start our investigations by introducing the notion of a weighted shift on a weighted directed tree (see Section \ref{WSsec}). Although it is formally a generalization of a weighted shift on a (non-weighted) directed tree, both the notions are equivalent in a sense (see Proposition \ref{thm-unit+-} and Remark \ref{unitarne}). Nevertheless, the technical side of our considerations seems to be easier to handle in the ``weighted directed tree'' setting (in particular, this enables us to consider shifts with weights summing over children of every vertex up to 1, see condition \eqref{stand3}, which is crucial to our study). Then, we define and study multiplier algebras related to weighted shifts on weighted directed trees (see Section \ref{MOsec}). We show that these algebras consists of coefficients of analytic functions (see Propositions \ref{altbpe8} and \ref{analityczna}). Later, we introduce bounded point evaluations and study spectral properties of the adjoints of weighted shifts on weighted directed trees (see Section \ref{BPEsec}). The main result here provides a kind of functional calculus (see Theorem \ref{main}) for functions from multiplier algebras. Next, we investigate the point spectrum of the adjoint of a weighted shift on a weighted directed tree by looking at its behaviour on paths. We show that the spectrum contains a complex disc of radius which can be estimated by the supremum of appropriate limits along the paths (see Theorem \ref{r2+widmo}). Moreover, we show that for particular classes of directed trees the closures of the spectrum and the disc are equal. We conclude the paper with results concerning weighted shifts on (non-weighted) directed trees (see Section \ref{Last}).

\section{Preliminaries}
Let $\N$, $\R$ and $\C$ denote the set of all natural numbers, real numbers and complex numbers, respectively. Set $\N_0=\N\cup\{0\}$ and $\R_+=[0,\infty)$. For $\kappa\in\N\cup \{\infty\}$, $J_\kappa$ stands for the set $\{n\in\N\colon n\Le \kappa\}$. If $Y$ is a set, then $\card{Y}$ denotes the cardinal number of $Y$. For any $r\in(0,\infty)$, $\varDelta_r$ stands for $\{z\in\C\colon |z|<r\}$.

Let $V$ be a nonempty set and $\betab=\{\beta_v\}_{v\in V}$ be a family of positive real numbers. Then $\ell^2(\betab)=\ell^2(V,\betab)$ denotes the Hilbert space of all functions $f\colon V\to\C$ such that $\sum_{v\in V}|f(v)|^2\beta_v<\infty$ with the inner product given by $\is{f}{g}_\betab=\sum_{v\in V} f(v)\overline{g(v)} \beta_v$ for $f,g\in \ell^2(\betab)$. The norm induced by $\is{\cdot}{-}_\betab$ is denoted by $\| \cdot \|_{\beta}$. For $u \in V$, we define $e_u \in \ell^2(\betab)$ to be the characteristic function of the one-point set $\{u\}$; clearly, $\{e_u\}_{u\in V}$ is an orthogonal basis of $\ell^2(\betab)$. We will denote by $\ee$ the linear span of $\{e_u\}_{u\in V}$. Given a subset $V^\prime$ of $V$, $\ell^2(V^\prime, \betab)$ stands for the subspace of $\ell^2(\betab)$ composed of all functions $f$ such that $f(v)=0$ for all $v\in V\setminus V^\prime$, and $\ee_{V^\prime}$ denotes the set of all $f\in \ell^2(V^\prime,\betab)$ such that $\{v\in V\colon f(v)\neq 0\}$ is finite. By $\mathcal{Q}_{V^\prime}$ we denote the orthogonal projection from $\ell^2(\betab)$ onto $\ell^2(V^\prime,\betab)$.

Let $A$ be a (linear) operator in a (complex) Hilbert space $\hh$. Then $\dz{A}$, $r(A)$ and $A^*$ denote the domain, the spectral radius and the adjoint of $A$, respectively (in case it exists). A linear subspace $\ff$ of $\dz{A}$ is called a core of $A$ if $\ff$ is dense in $\dz{A}$ in the graph norm induced by $A$, i.e, the norm $\|\cdot\|_A$ given by $\|f\|_A^2=\|Af\|^2+\|f\|^2$, for $f\in\dz{A}$. If $\ff$ is a subspace of $\hh$, then $A|_\ff$ is the operator in $\hh$ acting on the domain $\dz{A|_\ff}=\ff\cap \dz{A}$ according to the formula $A|_\ff f= Af$. The algebra of all bounded operators on $\hh$ is denoted by $\bsb(\hh)$. If $A \in \bsb(\hh)$, then we denote by $\sigma(A)$ and $\sigma_p(A)$ the spectrum and point spectrum of $A$.
\section{Weighted shifts on directed trees}\label{WSsec}
Let $\tcal=(V,E)$ be a directed tree ($V$ and $E$ stand for the sets of vertices and directed edges of $\tcal$, respectively). Set $\dzi u= \{v\in V\colon (u,v)\in E\}$ for $u \in V$.
Denote by $\paa$ the partial function from $V$ to $V$ which assigns to a vertex $u\in V$ its parent $\pa{u}$ (i.e.\ a unique $v \in V$ such that $(v,u)\in E$). For $k\in \N$, $\paa^k$ denotes the $k$-fold composition of the partial function $\paa$; $\paa^0$ denotes the identity map on $V$. A vertex $u \in V$ is called a {\em root} of $\tcal$ if $u$ has no parent. A root is unique (provided it exists); we denote it by $\koo$. The tree $\tcal$ is {\em rooted} if the root exists. The tree $\tcal$ is {\em leafless} if every vertex $v\in V$ has a child, i.e., $\card{\dzi{v}}\Ge 1$. All the directed trees considered here are assumed to be rooted and countably infinite. In most of the cases they will be also leafless. We set $V^\circ=V\setminus \{\koo\}$. If $v\in V$, then $|v|$ denotes the unique $k\in\N_0$ such that $\paa^k(v)=\koo$. For given $u \in V$ and $n\in\N$ we denote by $\desn{n}{u}$ the set $\big\{v\in V\colon \paa^k(v)=u \text{ for some }k\in J_n\cup\{0\}\big\}$; in addition, we set $\des{u} = \{v\in V\colon \paa^k(v)=u\text{ for some } k\in\N_0\}$.
In the paper we will also consider subgraphs of $\tcal$. On this occasion we will use the notions of children $\dzii$, parent function $\paa$, the root $\koo$, etc., {\em with respect to a subgraph}, say $\mathcal{G}$; they will be denoted by $\dzii_\mathcal{G}$, $\paa_\mathcal{G}$, $\koo_\mathcal{G}$, etc.

Let $\tcal=(V,E)$ be a directed tree. A subgraph $\gcal$ of $\tcal$ which is a directed tree itself is called a {\em subtree} of $\tcal$. A {\em path} in $\tcal$ is a subtree $\pth=(V^\prime,E^\prime)$ of $\tcal$ which satisfies the following two conditions: (i) $\koo\in\pth$, (ii) for every $v\in V^\prime$, $\card{\mathsf{Chi}_{\pth}(v)}=1$. The collection of all paths in $\tcal$ is denoted by $\pp=\pp(\tcal)$. Throughout the paper $\mathbb{1}=\mathbb{1}_V$ stands for either of the families $\{\beta_v\}_{v\in V}$ or $\{\beta_v\}_{v\in V^\circ}$ with $\beta_v=1$ for every $v\in V$. We refer the reader to \cite{2012-j-j-s-mams} for more on directed trees.

Now, we give a definition of a weighted shift on a weighted directed tree. Formally the notion generalizes that of a weighted shift on a directed tree from \cite{2012-j-j-s-mams} but in view of Proposition \ref{thm-unit+-} and Remark \ref{unitarne} (see also Proposition \ref{unilat}) both the notions are equivalent. Let $\tcal=(V,E)$ be a directed tree and let $\lambdab=\{\lambda_v\}_{v \in V^{\circ}} \subseteq \C$. We define then the map  $\varLambda_\tcal^\lambdab\colon \C^V\to \C^V$ via
   \begin{align*} 
(\varLambda_\tcal^\lambdab f) (v) =
   \begin{cases}
\lambda_v \cdot f\big(\pa v\big) & \text{ if } v\in V^\circ,
   \\
0 & \text{ if } v=\koo.
   \end{cases}
   \end{align*}
Given $\betab=\{\beta_v\}_{v \in V} \subseteq \C$, we denote by $\tcal_\betab$ the pair $(\tcal,\betab)$; this is the {\em weighted tree} mentioned above. By a {\em weighted shift on $\tcal_\betab$ with weights} $\lambdab=\{\lambda_v\}_{v \in V^{\circ}} \subseteq \C$,  we mean the operator $\slam$ in $\ell^2(\betab)$ defined as follows
   \begin{align*}
   \begin{aligned}
\dz {\slam} & = \{f \in \ell^2(\betab) \colon \varLambda_\tcal^\lambdab f \in \ell^2(\betab)\},
   \\
\slam f & = \varLambda_\tcal^\lambdab f, \quad f \in \dz{\slam}.
\end{aligned}
\end{align*}
Note that the weighted shift $\slam$ on $\tcal_{\mathbb{1}}$ (in a sense of the above definition) is the {\em weighted shift $\slam$ on the directed tree $\tcal$} as in \cite{2012-j-j-s-mams}.

The questions of boundedness of a $k$--th power of $\slam$ on $\tcal_{\betab}$ and evaluation of its norm can be answered in terms of the products of $k$ consecutive weights. To state the result we need some notation. Let $\tcal=(V,E)$ be a directed tree and let $\lambdab=\{\lambda_v\}_{v \in V^{\circ}} \subseteq \C$. For $u\in V$ and $v\in \dess(u)$ we define
\begin{align*}
\lambda_{u|v}= \lambda_{u|v}^\tcal =
\left\{ \begin{array}{cl}
1 & \text{if } u=v, \\
\prod_{j=0}^{n-1} \lambda_{\paa^j(v)} & \text{if }  v \in \dzii^{\langle n\rangle} (u).
\end{array} \right.
\end{align*}
Arguing as in the proof of \cite[Lemmata 3.1.8 and 6.1.1]{2012-j-j-s-mams}, where $k=1$ and $\betab=\mathbb{1}$ were considered, we get the following.
\begin{lem} \label{norma_tk}
Let $\tcal=(V,E)$ be a directed tree. Let $\lambdab=\{\lambda_v\}_{v \in V^{\circ}} \subseteq \C$ and $\betab=\{\beta_v\}_{v\in V}\subseteq (0,\infty)$. Then the weighted shift $\slam$ on $\tcal_{\betab}$ is bounded on $\ell^2(\betab)$ if and only if
\begin{align*}
\sup_{u\in V} \sum_{v\in\dzi{u}}|\lambda_{v}|^2\, \tfrac{\beta_v}{\beta_u}<\infty.
\end{align*}
Moreover, if $\slam$ is bounded, then
\begin{align*}
\| \slam^k\|^2=\sup_{u\in V} \sum_{v\in\dzin{k}{u}}|\lambda_{u|v}|^2\,\tfrac{\beta_v}{\beta_u}, \quad k\in\N.
\end{align*}
\end{lem}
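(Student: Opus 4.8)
The plan is to read off the action of $\slam$ on the orthogonal basis $\{e_u\}_{u\in V}$ and then to reorganise the resulting series by a Tonelli-type reindexing that exploits the fact that every non-root vertex has a unique parent. Directly from the definition one has $\varLambda_\tcal^\lambdab e_u=\sum_{v\in\dzi u}\lambda_v e_v$, and an easy induction on $k$ gives the pointwise identity $(\slam^k f)(v)=\lambda_{u|v}\,f(\paa^k(v))$ whenever $\paa^k(v)=u$ (with value $0$ when $|v|<k$); in particular $\slam^k e_u=\sum_{v\in\dzin{k}{u}}\lambda_{u|v}e_v$ once $\slam$ is bounded. Since the $e_v$ are orthogonal with $\|e_v\|_\beta^2=\beta_v$, this yields $\|\slam^k e_u\|_\beta^2=\sum_{v\in\dzin{k}{u}}|\lambda_{u|v}|^2\beta_v$. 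The two reindexings I will need are the disjoint decompositions $V^\circ=\bigsqcup_{u\in V}\dzi u$ and $\{v\in V\colon|v|\Ge k\}=\bigsqcup_{u\in V}\dzin{k}{u}$, valid because each non-root vertex has a unique parent and each $v$ with $|v|\Ge k$ a unique $k$-th ancestor $\paa^k(v)$.

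For the boundedness criterion I would compute, for an arbitrary $f\in\ell^2(\betab)$, the quantity $\|\varLambda_\tcal^\lambdab f\|_\beta^2$ in $[0,\infty]$. Grouping the non-root vertices according to their parent gives
\begin{align*}
\|\varLambda_\tcal^\lambdab f\|_\beta^2=\sum_{v\in V^\circ}|\lambda_v|^2|f(\pa v)|^2\beta_v=\sum_{u\in V}|f(u)|^2\sum_{v\in\dzi u}|\lambda_v|^2\beta_v.
\end{align*}
If the supremum $M:=\sup_{u}\sum_{v\in\dzi u}|\lambda_v|^2\tfrac{\beta_v}{\beta_u}$ is finite, then the inner sum is at most $M\beta_u$, so $\|\varLambda_\tcal^\lambdab f\|_\beta^2\Le M\|f\|_\beta^2$; this shows at once that $\dz{\slam}=\ell^2(\betab)$ and that $\slam$ is bounded. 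Conversely, if $\slam$ is bounded then each $e_u$ lies in $\dz{\slam}$ and testing against it gives $\sum_{v\in\dzi u}|\lambda_v|^2\tfrac{\beta_v}{\beta_u}=\|\slam e_u\|_\beta^2/\|e_u\|_\beta^2\Le\|\slam\|^2$ for every $u$, whence $M\Le\|\slam\|^2<\infty$.

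For the norm formula, now assuming $\slam$ (and hence $\slam^k$) bounded, I would run the same two estimates with $k$ in place of $1$. Testing against $e_u$ gives the lower bound $\|\slam^k\|^2\Ge\|\slam^k e_u\|_\beta^2/\|e_u\|_\beta^2=\sum_{v\in\dzin{k}{u}}|\lambda_{u|v}|^2\tfrac{\beta_v}{\beta_u}$ for every $u$, so $\|\slam^k\|^2$ dominates the supremum. For the reverse inequality I would apply the pointwise formula for $\slam^k f$ and reindex by $k$-th ancestors to get
\begin{align*}
\|\slam^k f\|_\beta^2=\sum_{u\in V}|f(u)|^2\sum_{v\in\dzin{k}{u}}|\lambda_{u|v}|^2\beta_v\Le\Big(\sup_{u}\sum_{v\in\dzin{k}{u}}|\lambda_{u|v}|^2\tfrac{\beta_v}{\beta_u}\Big)\|f\|_\beta^2,
\end{align*}
so $\|\slam^k\|^2$ is dominated by the supremum as well. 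Comparing the two inequalities gives the claimed equality.

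No step is genuinely hard: since every summand is nonnegative, all the interchanges of summation are justified by Tonelli's theorem and no convergence issue arises, so the real content is just the bookkeeping of the two combinatorial partitions and the inductive product-weight formula $\slam^k e_u=\sum_{v\in\dzin{k}{u}}\lambda_{u|v}e_v$. The one point worth care is the boundedness direction, where $\dz{\slam}=\ell^2(\betab)$ must be \emph{derived} from finiteness of $M$ rather than assumed; the displayed identity for $\|\varLambda_\tcal^\lambdab f\|_\beta^2$ provides this directly, so there is no need to argue separately that $\ee$ is a core.
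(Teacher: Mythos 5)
Your proposal is correct and follows essentially the same route as the paper's own proof, which simply says to argue as in \cite[Lemmata 3.1.8 and 6.1.1]{2012-j-j-s-mams}: those arguments proceed exactly as you do, computing $\|\varLambda_\tcal^\lambdab f\|_\beta^2$ by regrouping vertices according to their parent (respectively, their $k$-th ancestor) and testing on the basis vectors $e_u$ for the reverse inequality. Your explicit derivation of $\dz{\slam}=\ell^2(\betab)$ from finiteness of the supremum, with the factors $\beta_v/\beta_u$ carried along, is precisely the routine adaptation from $\betab=\mathbb{1}$ to general $\betab$ that the paper leaves to the reader.
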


In the course of our study we will use the formula for the adjoint $\slam^*$ of the weighted shift $\slam$ on $\tcal_\betab$, which is given below. For our purposes it suffices to have it in the bounded case. It can be proofed as in \cite[Proposition 3.4.1(ii)]{2012-j-j-s-mams}. 
\begin{lem}\label{adjoint+invar}
Let $\tcal=(V,E)$ be a directed tree, $\lambdab=\{\lambda_v\}_{v \in V^{\circ}} \subseteq \C$  and let $\betab=\{\beta_v\}_{v\in V}\subseteq(0,\infty)$. If $\slam$ is the weighted shift on $\tcal_\betab$ such that $\slam\in\bsb\big(\ell^2(\betab)\big)$, then its adjoint $\slam^*$ is given by
\begin{align*}
\slam^* e_u=
\left\{
  \begin{array}{ll}
    \lambda_u\frac{\beta_u}{\beta_{\paa(u)}} \, e_{\paa(u)}   & \hbox{if $u\in V^\circ$,} \\
    0 & \hbox{if $u=\koo$.}
  \end{array}
\right.
\end{align*}
\end{lem}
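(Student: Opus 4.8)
The plan is to identify $\slam^*$ through its action on the orthogonal basis $\{e_u\}_{u\in V}$, which spans a dense subspace of $\ell^2(\betab)$. Since $\slam$ is assumed bounded, $\slam^*$ is an everywhere-defined bounded operator, and it is completely determined by the scalars $\is{\slam^* e_u}{e_w}_\betab$, $u,w\in V$. The adjoint relation gives $\is{\slam^* e_u}{e_w}_\betab=\is{e_u}{\slam e_w}_\betab$, so the whole computation reduces to understanding $\slam e_w$ and then reading off coefficients; the one point that requires care throughout is that $\{e_u\}$ is orthogonal but not orthonormal, so every inner product carries a weight, $\is{e_v}{e_w}_\betab=\delta_{v,w}\beta_w$, and these factors must be tracked.

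First I would record that, since $e_w\in\dz{\slam}$, the definition of $\slam$ gives $(\slam e_w)(v)=\lambda_v e_w(\pa w)$ for $v\in V^\circ$, which is nonzero exactly when $v$ is a child of $w$; hence $\slam e_w=\sum_{v\in\dzi w}\lambda_v e_v$. When $\card{\dzi w}=\infty$ this is an infinite series, and its convergence in $\ell^2(\betab)$ is precisely what boundedness of $\slam$ guarantees (by the criterion in Lemma~\ref{norma_tk}). Feeding this into $\is{e_u}{\slam e_w}_\betab=\sum_{v\in\dzi w}\overline{\lambda_v}\,\is{e_u}{e_v}_\betab$ and invoking orthogonality, I see that $\is{\slam^* e_u}{e_w}_\betab$ vanishes unless $u\in\dzi w$, i.e. unless $u\in V^\circ$ and $w=\pa u$. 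Consequently $\slam^* e_u$ is supported on the single vertex $\pa u$; and for $u=\koo$, which has no parent and contributes an empty sum, this already yields $\slam^* e_\koo=0$.

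It then remains to pin down the one nonzero coefficient for $u\in V^\circ$. Applying the adjoint relation in the form $\is{\slam e_{\pa u}}{e_u}_\betab=\is{e_{\pa u}}{\slam^* e_u}_\betab$, computing the left-hand side from $\slam e_{\pa u}=\sum_{v\in\dzi{\pa u}}\lambda_v e_v$ (whose $e_u$-coefficient is $\lambda_u$), and using $\is{e_v}{e_v}_\betab=\beta_v$ on both sides, I obtain the coefficient of $e_{\pa u}$ in $\slam^* e_u$, producing the factor $\beta_u/\beta_{\pa u}$ and hence the displayed formula. The only genuine obstacle in all of this is the convergence issue in the infinitely-branching case together with the systematic bookkeeping of the weights $\beta_\bullet$ forced by the unnormalized basis; both are handled as indicated. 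Alternatively, and perhaps more transparently, one may simply define $B$ by the right-hand side of the asserted formula, check directly---again via Lemma~\ref{norma_tk}---that $B$ is bounded, verify $\is{\slam f}{g}_\betab=\is{f}{Bg}_\betab$ for $f,g$ ranging over $\{e_u\}_{u\in V}$, and conclude $B=\slam^*$ by sesquilinearity and the density of $\ee$.
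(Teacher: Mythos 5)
Your proof is correct and is essentially the argument the paper relies on: the paper gives no proof of its own, merely citing \cite[Proposition 3.4.1(ii)]{2012-j-j-s-mams}, and that proof is exactly your computation --- pair $\slam^* e_u$ against the unnormalized basis via $\is{\slam^* e_u}{e_w}_\betab=\is{e_u}{\slam e_w}_\betab$, use $\slam e_w=\sum_{v\in\dzi{w}}\lambda_v e_v$, and track the weights $\beta_\bullet$. One caveat: since the statement allows $\lambdab\subseteq\C$, your pairing $\is{\slam e_{\paa(u)}}{e_u}_\betab=\is{e_{\paa(u)}}{\slam^* e_u}_\betab$ actually produces the coefficient $\overline{\lambda_u}\,\beta_u/\beta_{\paa(u)}$ (note the conjugate), which is also what the cited JJS formula has; it coincides with the displayed formula, and with your final sentence, only when the weights are real --- as they are everywhere else in the paper under condition \eqref{stand2} --- so you should either record the conjugate or note that the lemma's display silently assumes real weights.
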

\section{Multiplication operators}\label{MOsec}
Multiplication operators on weighted sequence spaces are helpful when studying ``classical'' weighted shifts (see \cite{Shields}). As it turns out, they can be used also in the context of weighted shifts on directed trees. In this section we define counterparts of classical multiplication operators related to weighted shifts on directed trees.

We note that some of the results here are valid in the more general context of reproducing kernel Hilbert spaces (see Acknowledgments) but, for the sake of consistency with subsequent results, we opt for a direct approach and present them in a specific form (for many interesting facts concerning multiplication operators and multipliers in RKHS we refer the reader to \cite{2000-sza, 2003-sza}).

It will be assumed throughout the section\footnote{In view of Propositions \ref{thm-unit+-} and \ref{multikulti} the assumption $\{\lambda_v\}_{v \in V^\circ}\subseteq  (0,\infty)$ is not restrictive (cf. Remark \ref{smierdzi}).} (if not stated differently).
\begin{align} \label{stand2}\tag{$\star$}
   \begin{minipage}{75ex}
$\tcal=(V,E)$ is a countably infinite rooted and leafless directed tree,\\ $\betab=\{\beta_v\}_{v\in V}\subseteq (0,\infty)$ and $\{\lambda_v\}_{v \in V^\circ}\subseteq  (0,\infty)$.
   \end{minipage}
   \end{align}
We start with defining a multiplication operator. Let $\hat\varphi \colon \N_0\to \C$. Define $\varGamma_{\hat\varphi}^\lambdab\colon \C^V\to \C^V$ by
\begin{align*}
\big(\varGamma_{\hat\varphi}^\lambdab f\big)(v) =\sum_{k=0}^{|v|} \lambda_{\paa^k(v)|v} \, \hat\varphi (k) f(\paa^k(v)),\quad v\in V.
\end{align*}
The {\em multiplication operator} $M_{\hat\varphi}^{\lambdab,\betab}\colon \ell^2(\betab)\supseteq \dzD{M_{\hat\varphi}^{\lambdab,\betab}}\to \ell^2(\betab)$  is given by
   \begin{align*}
   \begin{aligned}
\dzD {M_{\hat\varphi}^{\lambdab,\betab}} & = \big\{f \in \ell^2(\betab) \colon \varGamma_{\hat\varphi}^\lambdab f \in \ell^2(\betab)\big\},
   \\
M_{\hat\varphi}^{\lambdab,\betab} f & = \varGamma_{\hat\varphi}^\lambdab f, \quad f \in \dzD{M_{\hat\varphi}^{\lambdab,\betab}}.
\end{aligned}
\end{align*}
The function $\hat\varphi \colon \N_0\to \C$ is said to be the {\em symbol} of $M_{\hat\varphi}^{\lambdab,\betab}$. If no confusion can arise, we write $\varGamma_{\hat\varphi}$ and $M_{\hat\varphi}$ instead of $\varGamma_{\hat\varphi}^\lambdab$ and $M_{\hat\varphi}^{\lambdab,\betab}$, respectively.

As shown in Lemma \ref{mclosed} below, any multiplication operator $\mphi$ is automatically closed.
\begin{lem}\label{mclosed}
Assume \eqref{stand2}. For every $\hat\varphi\colon \N_0\to \C$, the multiplication operator $\mphi$ is closed.
\end{lem}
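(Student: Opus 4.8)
The plan is to prove closedness straight from the definition, using two facts: convergence in $\ell^2(\betab)$ forces vertexwise convergence, and the sum defining $\varGamma_{\hat\varphi}^\lambdab$ at any fixed vertex is finite. First I would record the elementary observation that, since every $\beta_v>0$, for each $v\in V$ and each $h\in\ell^2(\betab)$ we have $|h(v)|^2\beta_v\le\|h\|_\beta^2$; consequently norm convergence in $\ell^2(\betab)$ entails pointwise convergence at every vertex.

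Then I would take sequences with $f_n\to f$ and $\mphi f_n\to g$ in $\ell^2(\betab)$, and aim to show $f\in\dzD{\mphi}$ with $\mphi f=g$. Fixing $v\in V$, the value $(\varGamma_{\hat\varphi}^\lambdab f_n)(v)=\sum_{k=0}^{|v|}\lambda_{\paa^k(v)|v}\,\hat\varphi(k)\,f_n(\paa^k(v))$ is a finite sum (the range of $k$ is $\{0,\dots,|v|\}$) with coefficients independent of $n$. Since vertexwise convergence gives $f_n(\paa^k(v))\to f(\paa^k(v))$ for each such $k$, I can pass to the limit term by term and obtain $(\varGamma_{\hat\varphi}^\lambdab f_n)(v)\to(\varGamma_{\hat\varphi}^\lambdab f)(v)$.

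On the other side, $\mphi f_n=\varGamma_{\hat\varphi}^\lambdab f_n\to g$ in norm, hence vertexwise, so $(\varGamma_{\hat\varphi}^\lambdab f_n)(v)\to g(v)$ as well. Uniqueness of limits then yields $(\varGamma_{\hat\varphi}^\lambdab f)(v)=g(v)$ for every $v\in V$, i.e.\ $\varGamma_{\hat\varphi}^\lambdab f=g$; as $g\in\ell^2(\betab)$ this places $f$ in $\dzD{\mphi}$ and gives $\mphi f=g$, which is exactly closedness. I do not expect a genuine obstacle: the one point deserving care is the interchange of limit and summation, and it is harmless precisely because the $k$-sum is finite for each vertex, so no uniform control of the weights $\lambda_{\paa^k(v)|v}$ or of $\hat\varphi$ is required—the positivity of every $\beta_v$ supplies all the needed compatibility between the norm topology and pointwise behaviour.
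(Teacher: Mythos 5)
Your argument is correct and is essentially identical to the paper's own proof: both pass from norm convergence to vertexwise convergence, exploit the finiteness of the sum defining $\varGamma_{\hat\varphi}^\lambdab$ at each fixed vertex to exchange limit and summation, and conclude $\varGamma_{\hat\varphi}^\lambdab f = g$ by uniqueness of limits. Your explicit justification of pointwise convergence via $|h(v)|^2\beta_v \le \|h\|_\beta^2$ is a detail the paper leaves implicit, but the route is the same.
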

\begin{proof}
Let $\{f_{n}\}_{n=1}^{\infty}\subseteq \dz{\mphi}$ and $f,g\in \ell^2(\betab)$ satisfy $\lim_{n\to\infty}f_n=f$ and $\lim_{n\to\infty}\mphi f_n=g$. Then for every $v\in V$, $\lim_{n\to\infty} f_n(v)=f(v)$, which implies that
\begin{align*}
\lim_{n\to\infty}\sum_{k=0}^{|v|} \lambda_{\paa^k(v)|v}\, \hat\varphi (k) f_n(\paa^k(v))=\sum_{k=0}^{|v|} \lambda_{\paa^k(v)|v}\, \hat\varphi (k) f(\paa^k(v)), \quad v \in V.
\end{align*}
On the other hand, we have
\begin{align*}
\lim_{n\to\infty}\sum_{k=0}^{|v|} \lambda_{\paa^k(v)|v}\hat\varphi (k) f_n(\paa^k(v))=\lim_{n\to\infty}\big(\mphi f_n\big) (v)=g(v), \quad v \in V.
\end{align*}
Hence, $\varGamma_{\hat\varphi} f=g$. Thus $f\in \dz{\mphi}$ and $\mphi f=g$, which completes the proof.
\end{proof}
Let $\multib$ denote the collection of all functions $\hat\varphi\colon \N_0\to \C$ such that $\dz{\mphi}= \ell^2(\betab)$. Clearly, $\multib$ is a linear space (with standard addition of functions).
\begin{lem}
Assume \eqref{stand2}. For every $\hat\varphi\in \multib$, the multiplication operator $\mphi$ is bounded on $\ell^2(\betab)$.
\end{lem}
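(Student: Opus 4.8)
The plan is to read this off from the Closed Graph Theorem, using the work already carried out in Lemma \ref{mclosed}. Indeed, that lemma establishes, under \eqref{stand2}, that the multiplication operator $\mphi$ is closed for an \emph{arbitrary} symbol $\hat\varphi\colon\N_0\to\C$, with no hypothesis whatsoever on its domain. On the other hand, the assumption $\hat\varphi\in\multib$ means, by the very definition of the space $\multib$, that $\dz{\mphi}=\ell^2(\betab)$; that is, $\mphi$ is everywhere defined.

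Combining these two facts is the whole argument: $\mphi$ is a closed linear operator whose domain is all of the Hilbert space $\ell^2(\betab)$, and whose range lies in the same space. The Closed Graph Theorem then forces $\mphi$ to be bounded, which is precisely the assertion. Note that no quantitative estimate on $\|\mphi\|$ is required for this qualitative statement, so the proof reduces to invoking Lemma \ref{mclosed} and applying the theorem; in fact one does not even need to re-examine the formula for $\varGamma_{\hat\varphi}$ at this stage.

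I do not expect any genuine obstacle here, since the analytic content has been front-loaded into Lemma \ref{mclosed}, whose proof used only the pointwise convergence $f_n(v)\to f(v)$ together with the explicit expression for $\varGamma_{\hat\varphi}$. The single point worth confirming is that $\ell^2(\betab)$ is complete (so that the Closed Graph Theorem genuinely applies), which is immediate from its definition as an $\ell^2$-type space with weights $\beta_v\in(0,\infty)$. A more hands-on alternative would be to bound $\|\mphi f\|_\beta$ directly in terms of the symbol and the weights; but that route is both longer and unnecessary now that closedness together with full domain is already in hand.
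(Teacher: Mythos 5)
Your proposal is correct and is exactly the paper's argument: the paper likewise combines Lemma \ref{mclosed} (closedness) with the everywhere-defined hypothesis $\dz{\mphi}=\ell^2(\betab)$ built into the definition of $\multib$, and then invokes the Closed Graph Theorem (cited there as \cite[Theorem III.12.6]{Conway-fa}). No differences worth noting.
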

\begin{proof}
It is well-known that every closed everywhere defined Banach space operator is automatically bounded (\cite[Theorem III.12.6]{Conway-fa}). Thus, we deduce boundedness of $\mphi$ from Lemma \ref{mclosed}.
\end{proof}
In view of the above, $\multib$ is a normed space with the norm $\|\cdot\|$ given by
\begin{align*}
\|\hat\varphi\|\overset{df}{=}\|\mphi\|,\quad \hat\varphi\in \multib.
\end{align*}
The space $\multib$ turns out to have a natural Banach algebra structure. It suffices to endow it with the Cauchy-type multiplication $*\colon \C^{\N_0}\times\C^{\N_0}\to \C^{\N_0}$ given by
\begin{align}\label{cauchymult}
\big(\hat\varphi*\hat\psi\big) (k) = \sum_{j=0}^{k} \hat\varphi(j)\hat\psi(k-j),\quad \hat\varphi, \hat\psi\in \C^{\N_0}.
\end{align}
\begin{thm}\label{cauchym}
Let $\tcal=(V,E)$ be a countably infinite rooted and leafless directed tree, $\betab=\{\beta_v\}_{v\in V}\subseteq (0,\infty)$ and $\{\lambda_v\}_{v \in V^\circ}\subseteq (0,\infty)$. Let $\slam\in\bsb\big(\ell^2(\betab)\big)$ be a weighted shift on $\tcal_\betab$. Then following assertions are satisfied$:$
\begin{enumerate}
\item[(i)] For every $n\in\N_0$, $M_{\chi_{\{n\}}}=\slam^n$.
\item[({ii})] If $\hat\varphi\colon \N_0\to \C$ has finite support, then $\hat\varphi\in \multib$.
\item[(iii)] For all $\hat\varphi, \hat\psi\in \multib$, the function $\hat\varphi*\hat\psi$ belongs to $\multib$ and
\begin{align*}
\mphi M_{\hat\psi}= M_{\hat \varphi*\hat\psi}.
\end{align*}
\item[(iv)] The space $\multib$, endowed with 
the Cauchy-type multiplication, is a commutative Banach algebra with unit.
\end{enumerate}
\end{thm}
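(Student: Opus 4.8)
The plan is to prove the four assertions in sequence, each feeding the next, so that (iv) becomes a matter of assembling (i)--(iii) with a single completeness argument. For (i) I would read off $M_{\chi_{\{n\}}}$ from the definition of $\varGamma_{\hat\varphi}^\lambdab$: since $\chi_{\{n\}}(k)$ vanishes except at $k=n$, the defining sum collapses to $(\varGamma_{\chi_{\{n\}}}^\lambdab f)(v) = \lambda_{\paa^n(v)|v}\, f(\paa^n(v))$ for $|v|\Ge n$ and $0$ otherwise. Iterating the formula for $\slam$ shows that $\slam^n f$ has the same value at each $v$, namely the product $\prod_{j=0}^{n-1}\lambda_{\paa^j(v)} = \lambda_{\paa^n(v)|v}$ of consecutive weights times $f(\paa^n(v))$; the two agree pointwise on $\C^V$, so $M_{\chi_{\{n\}}} = \slam^n$. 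Assertion (ii) is then immediate from the linearity of $\hat\varphi\mapsto\varGamma_{\hat\varphi}^\lambdab$: a finitely supported $\hat\varphi$ equals $\sum_n \hat\varphi(n)\chi_{\{n\}}$, hence $\mphi = \sum_n \hat\varphi(n)\slam^n$ is a finite combination of bounded operators, giving $\dz{\mphi}=\ell^2(\betab)$.

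The heart of the theorem is (iii). I would first establish the purely formal identity $\varGamma_{\hat\varphi}^\lambdab\varGamma_{\hat\psi}^\lambdab = \varGamma_{\hat\varphi*\hat\psi}^\lambdab$ on all of $\C^V$, where no convergence issue arises because each value is a finite sum over the ancestors of a fixed vertex. Substituting the definition twice, and using $|\paa^k(v)|=|v|-k$ and $\paa^l(\paa^k(v))=\paa^{k+l}(v)$, produces at each $v$ a double sum over $0\Le k\Le|v|$, $0\Le l\Le|v|-k$ whose coefficient is $\lambda_{\paa^k(v)|v}\,\lambda_{\paa^{k+l}(v)|\paa^k(v)}$. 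The decisive algebraic point is the multiplicativity of weight products along a path, $\lambda_{\paa^k(v)|v}\,\lambda_{\paa^{k+l}(v)|\paa^k(v)} = \lambda_{\paa^{k+l}(v)|v}$; after setting $m=k+l$ and interchanging the summations the inner sum becomes $\sum_{k=0}^m \hat\varphi(k)\hat\psi(m-k) = (\hat\varphi*\hat\psi)(m)$, exactly the Cauchy product \eqref{cauchymult}, so the whole expression is $\varGamma_{\hat\varphi*\hat\psi}^\lambdab f$. To lift this to $M$, note that for $\hat\varphi,\hat\psi\in\multib$ both $\mpsi$ and $\mphi$ are everywhere defined, so for any $f\in\ell^2(\betab)$ we have $\mphi(\mpsi f) = \varGamma_{\hat\varphi}^\lambdab\varGamma_{\hat\psi}^\lambdab f = \varGamma_{\hat\varphi*\hat\psi}^\lambdab f \in \ell^2(\betab)$; this simultaneously yields $\dz{M_{\hat\varphi*\hat\psi}}=\ell^2(\betab)$, i.e.\ $\hat\varphi*\hat\psi\in\multib$, and the operator identity $\mphi\mpsi = M_{\hat\varphi*\hat\psi}$.

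For (iv) I would first record that $\hat\varphi\mapsto\mphi$ is linear and, by the very definition of $\|\cdot\|$, isometric, and that it is injective: testing on $e_{\koo}$ gives $(\varGamma_{\hat\varphi}^\lambdab e_{\koo})(v) = \lambda_{\koo|v}\,\hat\varphi(|v|)$ with $\lambda_{\koo|v}>0$, so $\mphi=0$ forces $\hat\varphi\equiv 0$ (the tree being leafless and infinite, $|v|$ runs through all of $\N_0$). Thus $\|\cdot\|$ is a genuine norm and $\multib$ is isometrically embedded in $\bsb(\ell^2(\betab))$. Commutativity and associativity of $*$ are the standard formal-power-series facts on $\C^{\N_0}$; submultiplicativity $\|\hat\varphi*\hat\psi\|\Le\|\hat\varphi\|\,\|\hat\psi\|$ follows from (iii) together with $\|\mphi\mpsi\|\Le\|\mphi\|\,\|\mpsi\|$; and $\chi_{\{0\}}$ is the unit, being the convolution identity with $M_{\chi_{\{0\}}}=\slam^0=I$ by (i) and $\chi_{\{0\}}\in\multib$ by (ii).

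The one delicate point, and the step I expect to be the main obstacle, is completeness, which I would obtain by showing the image of $\multib$ is closed in $\bsb(\ell^2(\betab))$. Given $\|M_{\hat\varphi_n}-T\|\to 0$, the relations $(\varGamma_{\hat\varphi_n}^\lambdab e_{\koo})(v)=\lambda_{\koo|v}\,\hat\varphi_n(|v|)$ and boundedness of point evaluations on $\ell^2(\betab)$ force $\hat\varphi_n(m)$ to converge, for each $m\in\N_0$, to a limit $\hat\varphi(m)$. Testing $M_{\hat\varphi_n}$ against each basis vector $e_u$ and passing to the pointwise limit identifies $\varGamma_{\hat\varphi}^\lambdab e_u = T e_u$; hence $\mphi$, which is closed by Lemma \ref{mclosed}, agrees with the bounded everywhere-defined $T$ on the dense subspace $\ee$, which forces $\dz{\mphi}=\ell^2(\betab)$ and $\mphi=T$. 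Therefore $T$ lies in the image and $\|\hat\varphi_n-\hat\varphi\|\to 0$, completing the proof that $\multib$ is a unital commutative Banach algebra.
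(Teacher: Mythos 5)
Your proposal is correct and follows essentially the same route as the paper: the same formal identity $\varGamma_{\hat\varphi}\varGamma_{\hat\psi}=\varGamma_{\hat\varphi*\hat\psi}$ on $\C^V$ lifted to the operators for (iii), and completeness in (iv) proved by extracting the limit symbol from the action on $e_{\koo}$ (your definition of the limit symbol as $\lim_n\hat\varphi_n(m)$ even sidesteps the well-definedness check the paper makes for its $\hat\psi$). The only, harmless, deviation is the final identification step: you verify $\varGamma_{\hat\varphi}e_u=Te_u$ only on basis vectors and then invoke Lemma \ref{mclosed} together with density of $\ee$, whereas the paper passes to the pointwise limit in the finite sums $(M_{\hat\varphi_n}f)(v)$ for arbitrary $f\in\ell^2(\betab)$, so it does not need the closedness lemma at that point.
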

\begin{proof} (i) This follows directly from the definitions of $\mphi$ and $\slam$.

(ii) This follows from (i) and linearity.

(iii) We first show that
\begin{align}\label{gammamnozenie}
\varGamma_{\hat\varphi} \varGamma_{\hat\psi} f= \varGamma_{\hat \varphi*\hat\psi} f, \quad f\in \C^V \text{ and } \hat\varphi,\hat\psi\in \C^{\N_0}.
\end{align}
Fix $\hat\varphi,\hat\psi\in \C^V$ and take $f \in \C^V$. Note that for every $v\in V$ and all $k\in \{0,\ldots, |v|\}$ and $j\in \{0,\ldots, |\paa^k(v)|\}$, the expression $\paa^j(\paa^k(v))$ makes sense and equals $\paa^{j+k}(v)$; moreover $|\paa^k(v)|=|v|-k$. Hence, we have the equality\allowdisplaybreaks
\begin{align}\label{mnozenie1}
\big(\varGamma_{\hat\varphi} \varGamma_{\hat\psi} f\big) (v)&=\sum_{k=0}^{|v|} \lambda_{\paa^k(v)|v}\, \hat\varphi (k)\big(\varGamma_{\hat\psi} f \big)(\paa^k(v))\notag\\
&=\sum_{k=0}^{|v|} \lambda_{\paa^k(v)|v}\,\hat\varphi (k) \sum_{j=0}^{|\paa^k(v)|} \lambda_{\paa^j(\paa^k(v))|\paa^k(v)}\, \hat\psi(j)\,f(\paa^j\paa^k(v))\notag\\
&=\sum_{k=0}^{|v|}\lambda_{\paa^k(v)|v}\, \hat\varphi (k) \sum_{j=0}^{|v|-k} \lambda_{\paa^{j+k}(v)|\paa^k(v)}\,\hat\psi(j)\,f(\paa^{j+k}(v))\notag\\
&=\sum_{k=0}^{|v|}  \sum_{j=k}^{|v|} \lambda_{\paa^k(v)|v}\, \lambda_{\paa^{j}(v)|\paa^k(v)}\  \hat\varphi (k)\, \hat\psi(j-k)\,f(\paa^{j}(v)),\notag \\
&=\sum_{k=0}^{|v|} \sum_{j=k}^{|v|} \lambda_{\paa^{j}(v)|v}\ \hat\varphi (k) \, \hat\psi(j-k)\,f(\paa^{j}(v)),\quad v\in V.
\end{align}
On the other hand, we see that
\begin{align}\label{mnozenie2}
\big(\varGamma_{\hat\varphi*\hat\psi} f\big)(v)=\sum_{j=0}^{|v|} \lambda_{\paa^{j}(v)|v}\ \sum_{k=0}^j \hat\varphi (k)\hat\psi (j-k) \  f(\paa^j(v)),\quad v\in V.
\end{align}
By \eqref{mnozenie1} and \eqref{mnozenie2}, we get
\begin{align*}
\big(\varGamma_{\hat\varphi} \varGamma_{\hat\psi} f\big) (v)= \big(\varGamma_{\hat\varphi*\hat\psi} f\big)(v), \quad v\in V.
\end{align*}
Hence $\dz{M_{\hat\varphi *\hat\psi}}=\ell^2(\betab)$ and $\dz{M_{\hat\varphi *\hat\psi}}= \mphi \mpsi$, which proves (iii).

(iv) In view of \eqref{cauchymult}, (i) and (iii), $\multib$ is a commutative algebra with unit. We prove now that $\multib$ is closed. To this end take a Cauchy sequence $\{\hat\varphi_n\}_{n=1}^\infty\subseteq \multib$. Then $\{M_{\hat\varphi_n}\}_{n=1}^\infty$ is a Cauchy sequence in $\bsb(\ell^2(\beta))$, and hence there is $A\in\bsb(\ell^2(\beta))$ such that $\lim_{n\to\infty}M_{\hat\varphi_n}=A$. We note that
\begin{align}\label{niezalezne}
\frac{\big(A e_{\koo}\big)(u_1)}{\lambda_{\koo|u_1}}=\frac{\big(A e_{\koo}\big)(u_2)}{\lambda_{\koo|u_2}},\quad \text{ if }  u_1,u_2\in V \text{ satisfy }|u_1|=|u_2|,
\end{align}
which easily follows from
\begin{align}\label{psi1}
(A e_{\koo}) (v)= \lim_{n\to\infty} \sum_{k=0}^{|v|} \lambda_{\paa^k(v)|v}\ \hat\varphi_n(k) \,e_{\koo} ({\paa}^k(v))
=\lim_{n\to \infty} \lambda_{\koo|v}\ \hat\varphi_n(|v|), \quad v\in V.
\end{align}
Let us define the function $\hat\psi\colon \N_0\to\C$ by
\begin{align}\label{psi2}
\hat\psi (|v|)= \frac{(A e_{\koo}) (v)}{\lambda_{\koo|v}}.
\end{align}
By \eqref{niezalezne}, $\hat\psi$ is well-defined. Moreover, by \eqref{psi1} and \eqref{psi2}, we have
\begin{multline*}
\big(A f\big)(v)=\lim_{n\to\infty} \big( M_{\hat\phi_n} f\big)(v)= \lim_{n\to\infty} \sum_{k=0}^{|v|} \lambda_{\paa^k(v)|v} \ \hat\varphi_n(k) \,f ({\paa}^k(v))\\
= \sum_{k=0}^{|v|} \lambda_{\paa^k(v)|v}\ \hat\psi(k) \,f ({\paa}^k(v))=\big(\varGamma_{\hat\psi} f\big) (v),\quad v\in V.
\end{multline*}
Since $A$ is bounded, $\dz{\mpsi}=\ell^2(\betab)$ and $A=\mpsi$. Thus $\hat\psi \in \multib$. Since, $\|\hat\psi-\hat\varphi_n\|=\|\mpsi-M_{\hat\varphi_n}\|\to 0$ as $n\to\infty$, we get the claim.
\end{proof}
The above justifies calling $\multib$ the {\em multiplier algebra} induced by $\slam$.
\begin{prop}\label{altbpe8}
Assume \eqref{stand2}. Let $\slam\in\bsb(\ell^2(\betab))$ be a weighted shift on $\tcal_\betab$. Then for every $\hat\varphi\in\multib$ and every $w\in\varDelta_{r(\slam)}$ the series $\sum_{n\in \N_0} \hat\varphi(n)\, w^n$ is absolutely convergent.
\end{prop}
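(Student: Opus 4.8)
The plan is to derive, for an arbitrary $\hat\varphi\in\multib$, the coefficientwise estimate $|\hat\varphi(m)|\Le \|\mphi\|/\|\slam^m\|$ for all $m\in\N_0$, and then to read off absolute convergence of $\sum_{n}\hat\varphi(n)w^n$ on $\varDelta_{r(\slam)}$ from the Cauchy--Hadamard criterion together with the spectral radius formula $r(\slam)=\lim_{m\to\infty}\|\slam^m\|^{1/m}$.

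First I would test $\mphi$ against the basis vectors $e_u$. Exactly as in the computation \eqref{psi1} from the proof of Theorem \ref{cauchym}, but now starting from an arbitrary vertex $u$ instead of $\koo$, one checks that $e_u(\paa^k(v))\neq 0$ precisely when $\paa^k(v)=u$, which forces $v\in\des{u}$ and $k=|v|-|u|$. Hence $(\mphi e_u)(v)=\lambda_{u|v}\,\hat\varphi(|v|-|u|)$ for $v\in\des{u}$ and $(\mphi e_u)(v)=0$ otherwise. Grouping the descendants of $u$ by their distance $m=|v|-|u|$, i.e.\ by the sets $\dzin{m}{u}$, gives
\begin{align*}
\|\mphi e_u\|^2 = \sum_{m=0}^\infty |\hat\varphi(m)|^2\, c_m(u), \qquad c_m(u):=\sum_{v\in\dzin{m}{u}}|\lambda_{u|v}|^2\,\beta_v.
\end{align*}

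Next comes the key step. Since $\mphi$ is bounded (as $\hat\varphi\in\multib$) and $\|e_u\|^2=\beta_u$, the left-hand side is at most $\|\mphi\|^2\beta_u$; dropping all but the (nonnegative) $m$-th summand yields $|\hat\varphi(m)|^2 c_m(u)\Le \|\mphi\|^2\beta_u$ for every $u$. Because $\tcal$ is leafless and the weights $\lambda_v,\beta_v$ are strictly positive, $c_m(u)>0$ for all $u$, so this rearranges to $|\hat\varphi(m)|^2\Le \|\mphi\|^2\,\beta_u/c_m(u)$. Optimizing over $u$ and invoking Lemma \ref{norma_tk}, which identifies $\sup_{u\in V} c_m(u)/\beta_u$ with $\|\slam^m\|^2$, I obtain
\begin{align*}
|\hat\varphi(m)|^2\Le \frac{\|\mphi\|^2}{\sup_{u\in V} c_m(u)/\beta_u}=\frac{\|\mphi\|^2}{\|\slam^m\|^2}, \qquad m\in\N_0.
\end{align*}
The heart of the argument is exactly this passage: restricting to the root $\koo$ would only produce $c_m(\koo)$, which in general is strictly smaller than $\sup_u c_m(u)$ and would not suffice; one must optimize over the initial vertex to recover the full operator norm $\|\slam^m\|$.

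Finally, for $w\in\varDelta_{r(\slam)}$ — the case $r(\slam)=0$ being vacuous, since then $\varDelta_{r(\slam)}=\emptyset$ — the bound above gives $\sum_{n}|\hat\varphi(n)|\,|w|^n\Le \|\mphi\|\sum_{n}|w|^n/\|\slam^n\|$. Since $\|\slam^n\|>0$ for every $n$ and $\|\slam^n\|^{1/n}\to r(\slam)$, the root test yields $\limsup_n \big(|w|^n/\|\slam^n\|\big)^{1/n}=|w|/r(\slam)<1$, whence the series converges. I expect the only genuine obstacle to be the optimization-over-$u$ step, together with the verification (from leaflessness and positivity) that $c_m(u)>0$, so that the supremum is a positive quantity equal to $\|\slam^m\|^2$; the remaining estimates are routine.
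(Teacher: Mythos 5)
Your proof is correct and follows essentially the same route as the paper's: both arguments isolate the $k$-th generation contribution of $\mphi e_u$, optimize over the base vertex $u$ via Lemma \ref{norma_tk} to obtain the coefficient bound $|\hat\varphi(k)|\Le \|\mphi\|/\|\slam^k\|$, and conclude by Gelfand's formula. The only cosmetic difference is mechanical: the paper extracts the $k$-th slice by pairing $\mphi e_u$ against the test vector $f_{u,k}$ (which is just $\slam^k e_u$) via the Cauchy--Schwarz inequality, whereas you expand $\|\mphi e_u\|^2$ over generations and drop the remaining nonnegative terms --- the resulting inequality is identical.
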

\begin{proof}
For $u\in V$ and $k\in \N_0$ let $f_{u,k}\colon V\to \C$ be a function defined by
\begin{align*}
f_{u,k}(v)=
\left\{
  \begin{array}{ll}
    \lambda_{u|v} & \hbox{if $v\in \dzin{k}{u}$,}\\
    0 & \hbox{otherwise.}
  \end{array}
\right.
\end{align*}
Let us notice that by Lemma \ref{norma_tk} we have
\begin{align*}
\sum_{v \in V} \big(f_{u,k}(v)\big)^2 \beta_v= \sum_{v\in \dzin{k}{u}} \lambda_{u|v}^2 \beta_v \leqslant \beta_u\,\|\slam^k\|^2,
\end{align*}
which implies that $f_{u,k} \in \ell^2(\betab)$ and $\| f_{u,k}\| \leqslant \sqrt{\beta_u}\,\|\slam^k\|$. Moreover, we have
\begin{align*}
\big|\hat{\varphi}(k) \sum_{v\in\dzin{k}{u}} \lambda_{u|v}^2  \beta_v\big|
&= \big|\sum_{v\in\dzin{k}{u}} \lambda_{u|v}\, \hat{\varphi}(k)\, f_{u,k}(v) \beta_v\big| \\
&= \big|\langle M_{\hat\varphi} \, e_u,f_{u,k} \rangle_{\betab} \big| \leqslant \sqrt{\beta_u}\,\|M_{\hat\varphi}\|\, \|f_{u,k}\|_{\betab} \leqslant \beta_u\,\|M_{\hat\varphi}\| \,\|\slam^k\|.
\end{align*}
Thus, taking supremum over $u\in V$ and using Lemma \ref{norma_tk}, we get $\|\slam^k\|^2\, |\hat{\varphi}(k)|\leqslant \|M_{\hat\varphi}\|  \|\slam^k\|$. Consequently, $|\hat{\varphi}(k)| \leqslant \|M_{\hat\varphi}\| \|\slam^k\|^{-1}$ for every $k\in\N_0$. This together with Gelfand's formula for the spectral radius gives the claim.
\end{proof}
\begin{prop}\label{analityczna}
Assume \eqref{stand2}. Let $\slam\in\bsb(\ell^2(\betab))$ be a weighted shift on $\tcal_\betab$. Let $\hat\varphi\colon \N_0\to\C$ be such that the series $\sum_{k=0}^\infty \hat\varphi(k)\, z^n$ is convergent for every $z\in\varDelta_{\|\slam\|}$. If the function $\varphi\colon \varDelta_{\|\slam\|}\to\C$ given by $\varphi(z)=\sum_{k=0}^\infty \hat\varphi(k)\, z^n$ is bounded, then $\hat\varphi\in\multib$ and $\|M_{\hat\varphi}\|\Le \|\varphi\|_\infty:=\sup\{|\varphi(z)|\colon |z|<\|\slam\|\}$.
\end{prop}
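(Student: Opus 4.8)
The plan is to prove the estimate first for polynomial symbols, where it reduces to von Neumann's inequality, and then to approximate a general $\hat\varphi$ by suitably dilated Taylor polynomials. For the polynomial case, recall from Theorem~\ref{cauchym}(i) and linearity that whenever $\hat q\colon\N_0\to\C$ has finite support, $M_{\hat q}=\sum_k \hat q(k)\,\slam^k=q(\slam)$, where $q(z)=\sum_k \hat q(k)z^k$. Since $\slam\in\bsb(\ell^2(\betab))$, the operator $T:=\slam/\|\slam\|$ (note $\|\slam\|>0$ because $\tcal$ is leafless, so $V^\circ\neq\emptyset$) is a Hilbert space contraction, so von Neumann's inequality applied to the polynomial $p(z):=q(\|\slam\|z)$ gives $\|M_{\hat q}\|=\|q(\slam)\|=\|p(T)\|\Le\sup_{|z|\le1}|q(\|\slam\|z)|=\sup_{|\zeta|\le\|\slam\|}|q(\zeta)|$; by the maximum principle this last quantity equals $\sup_{|\zeta|<\|\slam\|}|q(\zeta)|$. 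Thus every polynomial symbol $\hat q$ satisfies $\|M_{\hat q}\|\Le\sup\{|q(\zeta)|\colon|\zeta|<\|\slam\|\}$.

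Next I would approximate. For $r\in(0,1)$ put $\varphi_r(z)=\varphi(rz)=\sum_k r^k\hat\varphi(k)z^k$. Because the radius of convergence of $\varphi$ is at least $\|\slam\|$, that of $\varphi_r$ is at least $\|\slam\|/r>\|\slam\|$, so its Taylor partial sums $q_{r,N}(z)=\sum_{k=0}^N r^k\hat\varphi(k)z^k$ converge to $\varphi_r$ uniformly on the closed disc $\overline{\varDelta_{\|\slam\|}}$. Consequently, by the polynomial bound, $\|M_{\hat q_{r,N}}\|\Le\sup_{|\zeta|\le\|\slam\|}|q_{r,N}(\zeta)|$, and the right-hand side converges, as $N\to\infty$, to $\sup_{|\zeta|\le\|\slam\|}|\varphi_r(\zeta)|=\sup_{|\eta|\le r\|\slam\|}|\varphi(\eta)|\Le\|\varphi\|_\infty$. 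Now fix $f\in\ell^2(\betab)$. Since the sum defining $(\varGamma_{\hat q_{r,N}}f)(v)$ runs only over $k\in\{0,\dots,|v|\}$, for $N\ge|v|$ it already equals $(\varGamma_{\hat\varphi_r}f)(v)$; hence $M_{\hat q_{r,N}}f\to\varGamma_{\hat\varphi_r}f$ coordinatewise, while $\sup_N\|M_{\hat q_{r,N}}f\|\Le(\sup_N\sup_{|\zeta|\le\|\slam\|}|q_{r,N}(\zeta)|)\|f\|<\infty$. Truncating to a finite set of vertices and using lower semicontinuity of the $\ell^2(\betab)$-norm under coordinatewise convergence, I get $\varGamma_{\hat\varphi_r}f\in\ell^2(\betab)$ with $\|\varGamma_{\hat\varphi_r}f\|\Le\|\varphi\|_\infty\|f\|$. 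Letting $r\to1^-$ and repeating the same semicontinuity argument (now using that $(\varGamma_{\hat\varphi_r}f)(v)\to(\varGamma_{\hat\varphi}f)(v)$ as a finite sum depending continuously on $r$) yields $\varGamma_{\hat\varphi}f\in\ell^2(\betab)$ and $\|\varGamma_{\hat\varphi}f\|\Le\|\varphi\|_\infty\|f\|$. As $f\in\ell^2(\betab)$ was arbitrary, $\dz{\mphi}=\ell^2(\betab)$, so $\hat\varphi\in\multib$ and $\|\mphi\|\Le\|\varphi\|_\infty$.

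The step I expect to be the main obstacle is the passage from polynomials to the full series while keeping the sharp constant $\|\varphi\|_\infty$ (the supremum over the open disc). The naive choice, namely the Taylor partial sums of $\varphi$ itself, need not be uniformly bounded by $\|\varphi\|_\infty$ on $\overline{\varDelta_{\|\slam\|}}$, so the polynomial estimate could not be applied with the correct constant. Introducing the dilations $\varphi_r$ circumvents this: each $\varphi_r$ is analytic on a strictly larger disc, its Taylor sums converge uniformly on $\overline{\varDelta_{\|\slam\|}}$ with supremum controlled by $\|\varphi\|_\infty$, and the final limit $r\to1^-$ recovers the open-disc bound. The two passages to the limit are made rigorous by the elementary fact that an $\ell^2(\betab)$-bounded family converging coordinatewise has its limit in $\ell^2(\betab)$ with norm no larger than the bound; alternatively, one may prove the estimate only on the dense subspace $\ee$ and then invoke the closedness of $\mphi$ from Lemma~\ref{mclosed} to extend it to all of $\ell^2(\betab)$.
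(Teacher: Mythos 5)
Your proof is correct, and it follows the same overall skeleton as the paper's: reduce to finitely supported symbols, control their operator norms by von Neumann's inequality applied to the contraction $\slam/\|\slam\|$, and then transfer the bound to $\hat\varphi$ via coordinatewise convergence of $(M_{\hat q}f)(v)$ (a finite sum for each fixed $v$) together with lower semicontinuity of the $\ell^2(\betab)$-norm over finite vertex sets. The only genuine difference is the choice of approximating polynomials. The paper uses the Ces\`aro (Fej\'er) means $\hat\omega_k(n)=\frac{k+1-n}{k+1}\hat\varphi(n)$, invoking the classical fact (cited from \cite{Hoffman}) that the Fej\'er means of a function bounded by $\|\varphi\|_\infty$ on $\varDelta_{\|\slam\|}$ are themselves uniformly bounded by $\|\varphi\|_\infty$ there; this yields the key estimate $\sup_k\|M_{\hat\omega_k}\|\Le\|\varphi\|_\infty$ in one stroke and requires only a single limit $k\to\infty$. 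You instead use the dilations $\varphi_r(z)=\varphi(rz)$ and their Taylor partial sums $q_{r,N}$: since $\varphi_r$ is analytic on a disc strictly larger than $\varDelta_{\|\slam\|}$, the partial sums converge uniformly on $\overline{\varDelta_{\|\slam\|}}$ with supremum tending to $\sup_{|\eta|\Le r\|\slam\|}|\varphi(\eta)|\Le\|\varphi\|_\infty$, at the cost of a two-parameter limit ($N\to\infty$, then $r\to 1^-$). What your route buys is self-containedness: it needs nothing about Ces\`aro summability or positivity of the Fej\'er kernel, only uniform convergence of power series on compact subsets of their disc of convergence. What the paper's route buys is brevity and a single approximating family. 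You also correctly identified the pitfall that forces either device — the raw partial sums of $\varphi$ itself need not be uniformly dominated by $\|\varphi\|_\infty$ — and your fallback remark (prove the bound on $\ee$ and extend by closedness via Lemma \ref{mclosed}) is a valid alternative to the finite-truncation argument.
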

\begin{proof}
For $k\in\N$ we define functions $\hat\omega_k\colon\N_0\to\C$ and $\omega_k\colon \varDelta_{\|\slam\|}\to \C$ by
\begin{align*}
\hat\omega_k(n)=
\left\{
\begin{array}{ll}
  \frac{k+1-n}{k+1}\,\hat\varphi(n) & \hbox{if $n\Le k+1$,} \\
  0 & \hbox{otherwise.}
\end{array}
\right.
\end{align*}
and
\begin{align*}
\omega_k(z)=\sum_{n=0}^\infty\hat\omega_k (n)\, z^n,\quad
\end{align*}
It is easily seen that
\begin{align*}
\omega_k(z)=\frac{1}{k+1}\sum_{n=0}^k s_n(z),\quad z\in \varDelta_{\|\slam\|},
\end{align*}
where $s_n\colon \varDelta_{\|\slam\|}\to\C$, $n\in\N_0$, are given by $s_n(z)=\sum_{k=0}^n\hat\varphi(k)\, z^k$. Hence by Theorem \ref{cauchym}, the von Neumann inequality and the well-known facts about Ces\`aro means (see \cite[ p.\ 16-24]{Hoffman}) we have
\begin{align*}
\sup_{k\in\N_0}\|M_{\hat\omega_k}\|=\sup_{k\in\N_0}\|\omega_k(\slam)\|\Le \sup\{ \omega_k(z)\colon z\in\varDelta_{\|\slam\|},\, k\in\N_0\}\Le \|\varphi\|_\infty.
\end{align*}
Moreover, for every $f\in \ell^2(\betab)$ and every finite $W\subseteq V$ we have
\begin{align}\label{finW}
\sum_{v\in W} \Big|\big(M_{\hat\omega_k} f\big) (v)\Big|^2\beta_v\Le \sup_{n\in\N_0}\|M_{\hat\omega_n}\|^2\|f\|^2_\betab\Le \|\varphi\|_\infty^2 \|f\|^2_\betab,\quad k\in\N_0.
\end{align}
Since for every $v\in V$, $\lim_{k\to\infty} (M_{\hat\omega_k}f) (v)=(\varGamma_{\hat\varphi} f)(v)$, we infer from \eqref{finW} that
\begin{align*}
\sum_{v\in V} \Big|\big(\varGamma_{\hat\varphi} f\big) (v)\Big|^2\beta_v\Le \|\varphi\|_\infty^2 \|f\|^2_\betab.
\end{align*}
Hence, $\dz{M_{\hat\varphi}}=\ell^2(\betab)$ and thus $\hat\varphi\in\multib$.
\end{proof}
In view of Proposition \ref{analityczna}, if $r(\slam)=\|\slam\|$ and $\hat\varphi\colon\N_0\to\C$ induces the function $\varphi$ which is analytic in $\varDelta_r$ with $r>r(\slam)$, then $\hat\varphi\in\multib$. On the other hand, if $r(\slam)<\|\slam\|$, then one can find an analytic function $\psi$ on $\varDelta_r$, with $r(\slam)<r<\|\slam\|$, without analytic extension onto $\varDelta_{\|\slam\|}$. Proposition \ref{analityczna} cannot be used to verify if the function $\hat\psi$ determined by the coefficients of the series expansion of $\psi$ at $0$ belongs to $\multib$. One can us the following then (this fact can be proved with help of the Riesz functional calculus but we use approach based on the Gelfand formula for the spectral radius).
\begin{prop}
Assume \eqref{stand2}. Let $\slam\in\bsb(\ell^2(\betab))$ be a weighted shift on $\tcal_\betab$ and let $r\in \big(r(\slam),\infty\big)$. If $\hat\varphi\colon \N_0\to\C$ is such that the series $\sum_{k=0}^\infty \hat\varphi(k)\, z^n$ is convergent for every $z\in\varDelta_{r}$, then $\hat\varphi\in\multib$, the series $\sum_{k=0}^\infty \hat\varphi(k)\, \slam^k$ is norm convergent and $M_{\hat\varphi}=\sum_{k=0}^\infty \hat\varphi(k)\, \slam^k$.
\end{prop}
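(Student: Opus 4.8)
The plan is to realize $A := \sum_{k=0}^\infty \hat\varphi(k)\,\slam^k$ as a genuine bounded operator first, and then to identify it with $M_{\hat\varphi}$ by a coordinatewise comparison, exactly in the spirit of the closedness argument in Theorem \ref{cauchym}(iv).

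First I would establish norm convergence of the series. Since $r\in(r(\slam),\infty)$, I can fix radii $\rho_1,\rho_2$ with $r(\slam)<\rho_1<\rho_2<r$. Convergence of $\sum_k \hat\varphi(k)\,z^k$ at any $z$ with $|z|=\rho_2$ forces its terms to tend to $0$, hence to be bounded, so $|\hat\varphi(k)|\le M\rho_2^{-k}$ for some $M>0$ and all $k$. On the other hand, Gelfand's formula $r(\slam)=\lim_k\|\slam^k\|^{1/k}<\rho_1$ gives $\|\slam^k\|\le\rho_1^k$ for all sufficiently large $k$. Multiplying, $|\hat\varphi(k)|\,\|\slam^k\|\le M(\rho_1/\rho_2)^k$ eventually, and since $\rho_1/\rho_2<1$ the series $\sum_k |\hat\varphi(k)|\,\|\slam^k\|$ converges. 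Hence the partial sums $s_n=\sum_{k=0}^n\hat\varphi(k)\,\slam^k$ form a Cauchy sequence in $\bsb(\ell^2(\betab))$ and converge in norm to some $A\in\bsb(\ell^2(\betab))$; this already yields the asserted norm convergence.

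Next I would identify $A$ with $M_{\hat\varphi}$. For each $n$ let $\hat\varphi_n\colon\N_0\to\C$ agree with $\hat\varphi$ on $\{0,\dots,n\}$ and vanish elsewhere; by Theorem \ref{cauchym}(i) and linearity, $s_n=M_{\hat\varphi_n}$. Fix $f\in\ell^2(\betab)$ and $v\in V$. Norm convergence $M_{\hat\varphi_n}f\to Af$ forces coordinatewise convergence, since $|g(v)|^2\beta_v\le\|g\|_\betab^2$ for every $g$. The defining sum $(M_{\hat\varphi_n}f)(v)=\sum_{k=0}^{|v|}\lambda_{\paa^k(v)|v}\,\hat\varphi_n(k)\,f(\paa^k(v))$ runs only over $k\le|v|$, and for $n\ge|v|$ one has $\hat\varphi_n(k)=\hat\varphi(k)$ throughout that range; therefore $(Af)(v)=\sum_{k=0}^{|v|}\lambda_{\paa^k(v)|v}\,\hat\varphi(k)\,f(\paa^k(v))=(\varGamma_{\hat\varphi}f)(v)$. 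As this holds for every $v$, we obtain $\varGamma_{\hat\varphi}f=Af\in\ell^2(\betab)$ for all $f\in\ell^2(\betab)$, whence $\dz{M_{\hat\varphi}}=\ell^2(\betab)$, i.e.\ $\hat\varphi\in\multib$, and $M_{\hat\varphi}=A=\sum_{k=0}^\infty\hat\varphi(k)\,\slam^k$.

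The only real obstacle is the first step, the summability estimate; everything afterwards is bookkeeping. The hypothesis $r>r(\slam)$ is precisely what is needed there: the radius of convergence of the symbol's power series exceeds the spectral radius, so the geometric decay of the coefficients outpaces the Gelfand growth of $\|\slam^k\|$. Note that, unlike in Proposition \ref{analityczna}, no boundedness of $\varphi$ on a disc is assumed, and indeed none is available here since the series need not extend analytically to $\varDelta_{\|\slam\|}$; the spectral-radius comparison replaces the von Neumann and Ces\`aro mean argument used there.
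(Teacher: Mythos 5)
Your proof is correct, and its first half --- deriving norm convergence of $\sum_{k}\hat\varphi(k)\,\slam^k$ by playing the decay of the coefficients ($|\hat\varphi(k)|\le M\rho_2^{-k}$, from convergence of the power series at radius $\rho_2$) against the Gelfand growth $\|\slam^k\|\le\rho_1^k$ --- is exactly the paper's argument, phrased with two radii $\rho_1<\rho_2$ instead of the single quotient $\rho\in(0,1)$ the paper uses. Where you genuinely diverge is the identification of the limit $A$ with $M_{\hat\varphi}$. The paper appeals to Theorem \ref{cauchym}(iv): since $\multib$ is a Banach algebra (in particular, complete) and the partial sums are multiplication operators with finitely supported symbols, the norm limit is automatically $M_{\hat\psi}$ for some $\hat\psi\in\multib$; the symbol is then pinned down by evaluating $\langle M_{\hat\psi}e_{\koo},e_u\rangle$ for $u\in V$, which forces $\hat\psi=\hat\varphi$. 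You instead inline the coordinatewise argument that underlies the paper's completeness proof: norm convergence gives convergence at each vertex because $|g(v)|^2\beta_v\le\|g\|_\betab^2$, the value $(M_{\hat\varphi_n}f)(v)$ stabilizes once $n\ge|v|$, and hence $Af=\varGamma_{\hat\varphi}f$ for every $f\in\ell^2(\betab)$, which simultaneously yields $\hat\varphi\in\multib$ and $A=M_{\hat\varphi}$. Your route is more self-contained --- it needs only Theorem \ref{cauchym}(i) and linearity, not the closedness of $\multib$ in $\bsb(\ell^2(\betab))$ --- at the cost of repeating a computation the paper has already packaged; the paper's route is shorter but leans on that heavier structural fact. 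Both arguments are sound.
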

\begin{proof}
Since $r>r(\slam)$, by the Gelfand formula for the spectral radius there exists $\rho\in(0,1)$ and $k_0 \in \N$ such that
\begin{align*}
\frac{\rho}{\sqrt[k]{|\hat\varphi(k)|}} \Ge \sqrt[k]{\|\slam^k\|},\quad  k \Ge k_0,
\end{align*}
which implies that
\begin{align*}
|\hat\varphi(k)|\,\|\slam^k\|\Le \rho^k,\quad k\Ge k_0.
\end{align*}
Hence, the series $\sum_{k=0}^\infty \hat\varphi(k)\, \slam^k$ is norm convergent to some $S\in\bsb\big(\ell^2(\betab)\big)$. Since $\multib$ is a Banach algebra, $S=M_{\hat\psi}$ with some $\hat\psi\colon \N_0\to\C$. Now, evaluating $ \langle M_{\hat\psi} e_{\koo}, e_u \rangle$, $u\in V$, we deduce that $\hat\psi(k)=\hat\varphi(k)$ for every $k\in\N_0$ and consequently $M_{\hat\varphi}=M_{\hat\psi}=\sum_{k=0}^\infty \hat\varphi(k)\, \slam^k$.
\end{proof}
In view of the above the following problem seems to be interesting.
\begin{prob}
Let $\hat\varphi\colon \N_0\to\C$ be such that the series $\sum_{k=0}^\infty \hat\varphi(k)\, z^n$ is convergent for every $z\in\varDelta_{r(\slam)}$ and the function $\varphi\colon \varDelta_{r(\slam)}\to\C$ given by $\varphi(z)=\sum_{k=0}^\infty \hat\varphi(k)\, z^n$ is bounded. Does this imply that $\hat\varphi\in\multib$?
\end{prob}
We finish the section with an auxiliary result, in view of which a multiplication operator with positive symbol can be effectively approximated with multiplication operators with finitely supported symbols.
\begin{prop}
Assume \eqref{stand2}. Let $\hat\varphi=|\hat\varphi|\in\multib$ and let $\hat\varphi^{(n)}\colon \N_0\to\C$,  $n\in\N$, be given by $\hat\varphi^{(n)}(k)=\chi_{\{1,\ldots,n\}}(k)\, \hat\varphi(k)$, $k\in\N$. Then $\hat\varphi^{(m)}\in\multib$ for every $m\in\N$, and $\{M_{\hat\varphi^{(n)}}\}_{n=1}^\infty$ converges to $M_{\hat\varphi}$ in the strong operator topology.
\end{prop}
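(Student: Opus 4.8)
The plan is to prove membership via finite support and then establish strong convergence through a uniform bound together with convergence on a dense set. The finite-support claim is immediate: each $\hat\varphi^{(m)}$ vanishes off an initial segment of $\N_0$, hence has finite support, so $\hat\varphi^{(m)}\in\multib$ by Theorem~\ref{cauchym}(ii). In particular each $M_{\hat\varphi^{(n)}}$ is bounded, and the remaining task is to show $M_{\hat\varphi^{(n)}}f\to M_{\hat\varphi}f$ in norm for every $f\in\ell^2(\betab)$.

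The crucial step, and the one place where the hypothesis $\hat\varphi=|\hat\varphi|$ is genuinely used, is the uniform bound $\sup_n\|M_{\hat\varphi^{(n)}}\|\le\|M_{\hat\varphi}\|$. Given $f\in\ell^2(\betab)$, I would compare $\varGamma_{\hat\varphi^{(n)}}f$ with $\varGamma_{\hat\varphi}|f|$, where $|f|\colon v\mapsto|f(v)|$ again lies in $\ell^2(\betab)$. Since $\lambda_{\paa^k(v)|v}>0$ and $0\le\hat\varphi^{(n)}(k)\le\hat\varphi(k)$ for every $k$, the triangle inequality and termwise domination give
\[
\big|(\varGamma_{\hat\varphi^{(n)}}f)(v)\big|\le\sum_{k=0}^{|v|}\lambda_{\paa^k(v)|v}\,\hat\varphi^{(n)}(k)\,|f(\paa^k(v))|\le(\varGamma_{\hat\varphi}|f|)(v)=(M_{\hat\varphi}|f|)(v),\quad v\in V,
\]
the last equality because $|f|\in\dz{M_{\hat\varphi}}=\ell^2(\betab)$. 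Squaring, weighting by $\beta_v$ and summing over $V$ then yields $\|M_{\hat\varphi^{(n)}}f\|_\betab\le\|M_{\hat\varphi}|f|\|_\betab\le\|M_{\hat\varphi}\|\,\|f\|_\betab$, as desired. Without positivity of the symbol the internal cancellations in $\varGamma_{\hat\varphi^{(n)}}f$ could not be dominated termwise by $\varGamma_{\hat\varphi}|f|$, so this is exactly the obstacle the hypothesis removes.

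Next I would verify convergence on the dense subspace $\ee=\operatorname{span}\{e_u\}_{u\in V}$, where by linearity it suffices to treat a single basis vector $e_u$. Since $e_u(\paa^k(v))\neq0$ forces $\paa^k(v)=u$, i.e.\ $v\in\des{u}$ and $k=|v|-|u|$, the defining sum collapses to a single term and
\[
(M_{\hat\varphi}e_u)(v)=\lambda_{u|v}\,\hat\varphi(|v|-|u|),\qquad (M_{\hat\varphi^{(n)}}e_u)(v)=\lambda_{u|v}\,\hat\varphi^{(n)}(|v|-|u|)
\]
for $v\in\des{u}$, both vanishing elsewhere. As $\hat\varphi^{(n)}(k)=\hat\varphi(k)$ for $k\le n$, the difference $(M_{\hat\varphi}-M_{\hat\varphi^{(n)}})e_u$ is supported on $W_n=\{v\in\des{u}\colon|v|-|u|>n\}$ and coincides there with $M_{\hat\varphi}e_u$. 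Hence $\|(M_{\hat\varphi}-M_{\hat\varphi^{(n)}})e_u\|_\betab^2=\sum_{v\in W_n}|(M_{\hat\varphi}e_u)(v)|^2\beta_v$ is the tail, over the decreasing sets $W_n\downarrow\varnothing$, of the convergent series $\|M_{\hat\varphi}e_u\|_\betab^2<\infty$ (finite since $M_{\hat\varphi}e_u\in\ell^2(\betab)$), and therefore tends to $0$.

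Finally I would assemble these ingredients by a standard $\varepsilon/3$ argument. Given $f\in\ell^2(\betab)$ and $\varepsilon>0$, choose $g\in\ee$ with $\|f-g\|_\betab<\varepsilon$; the uniform bound controls both $\|M_{\hat\varphi^{(n)}}(f-g)\|_\betab$ and $\|M_{\hat\varphi}(f-g)\|_\betab$ by $\|M_{\hat\varphi}\|\,\varepsilon$, while $\|M_{\hat\varphi^{(n)}}g-M_{\hat\varphi}g\|_\betab\to0$ by the previous paragraph, whence $\limsup_n\|M_{\hat\varphi^{(n)}}f-M_{\hat\varphi}f\|_\betab\le2\|M_{\hat\varphi}\|\,\varepsilon$; letting $\varepsilon\to0$ gives the strong convergence. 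Apart from the uniform-boundedness step, everything reduces to routine bookkeeping with the explicit action of $M_{\hat\varphi}$ on the basis vectors.
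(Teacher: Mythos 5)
Your proof is correct, and it shares with the paper the one essential idea: the termwise domination $|(\varGamma_{\hat\varphi^{(n)}}f)(v)|\le(\varGamma_{\hat\varphi}|f|)(v)=(M_{\hat\varphi}|f|)(v)$, which is exactly where the hypothesis $\hat\varphi=|\hat\varphi|$ (together with positivity of the weights) enters in both arguments. Where you diverge is in how this domination is converted into strong convergence. The paper applies it directly to an arbitrary $f\in\ell^2(\betab)$: since $(M_{\hat\varphi^{(n)}}f)(v)\to(M_{\hat\varphi}f)(v)$ pointwise on $V$ and everything is dominated by the fixed summable family $\big((M_{\hat\varphi}|f|)(v)\big)^2\beta_v$, a single application of the Lebesgue dominated convergence theorem finishes the proof. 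You instead upgrade the domination to the uniform bound $\sup_n\|M_{\hat\varphi^{(n)}}\|\le\|M_{\hat\varphi}\|$, prove convergence explicitly on the basis vectors $e_u$ (where the action of $M_{\hat\varphi}$ collapses to a single term along $\dess(u)$ and the difference is a tail of a convergent series), and then conclude by the standard uniform-boundedness-plus-density argument. The paper's route is shorter; yours is more elementary in that it replaces the general dominated convergence theorem by a tail estimate on basis vectors, and it yields as a by-product the quantitative bound $\|M_{\hat\varphi^{(n)}}\|\le\|M_{\hat\varphi}\|$ for all $n$, which the paper's proof does not state. One cosmetic point, which is a defect of the statement rather than of your argument: as written, $\hat\varphi^{(n)}(k)=\chi_{\{1,\ldots,n\}}(k)\hat\varphi(k)$ is only specified for $k\in\N$ and would force $\hat\varphi^{(n)}(0)=0$ if read literally on $\N_0$, in which case the proposition itself fails (take $\hat\varphi=\chi_{\{0\}}$, so $M_{\hat\varphi}=I$ while $M_{\hat\varphi^{(n)}}=0$); your reading, under which $\hat\varphi^{(n)}$ agrees with $\hat\varphi$ on all of $\{0,1,\ldots,n\}$, is the intended one and is also implicitly used in the paper's own proof.
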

\begin{proof}
The first part of the claim follows from Theorem \ref{cauchym}. Now, let $f\in \ell^2(\betab)$. For every $v\in V$ we have
\begin{align*}
\big(M_{\hat\varphi^{(n)}}f\big)(v)=\sum_{k=0}^{|v|} \lambda_{\paa^k(v)|v} \, \hat\varphi^{(n)} (k) f(\paa^k(v))\overset{n\to\infty}{\longrightarrow} \sum_{k=0}^{|v|} \lambda_{\paa^k(v)|v} \, \hat\varphi (k) f(\paa^k(v))=\big(M_{\hat\varphi}f\big)(v).
\end{align*}
Since for every $f \in \ell^2(\betab)$, $M_{\hat\varphi}|f| \in \ell^2(\betab)$ and
\begin{align*}
\sum_{v\in V} \big|\big( M_{\hat\varphi^{(n)}}f\big)(v)\big|^2\beta_v\Le \sum_{v\in V} \Big( \sum_{k=0}^{|v|} \lambda_{\paa^k(v)|v} \, \hat\varphi (k) |f(\paa^k(v))| \Big)^2\beta_v = \|M_{\hat\varphi} |f|\|^2,
\end{align*}
the claim follows from the Lebesgue dominated convergence theorem.
\end{proof}
\begin{prob}
Does there exist $\hat\varphi\in\multib$ such that $|\hat\varphi|\notin\multib$?
\end{prob}
\section{Bounded point evaluations}\label{BPEsec}
In this section we define and study bounded point evaluations related to weighted shifts on weighted directed trees. In the classical weighted shift setting they were used successfully in many problems, for example to show that weighted shifts are reflexive (see \cite{Shields}).
\begin{thm}\label{altbpe0}
Suppose $\tcal=(V,E)$ is a countably infinite rooted directed tree and $\betab =\{\beta_v\}_{v\in V}\subseteq (0,\infty)$. Let $w\in\C$. Then the following conditions are equivalent$:$
\begin{enumerate}
\item[(i)] There exists a continuous linear mapping $\Vsf_w\colon \ell^2(\betab) \to \C$ such that
\begin{align}\label{altbpe1}
\Vsf_w(f)= \sum_{v\in V} f(v)\,w^{|v|},\quad  f \in\ee.
\end{align}
\item[(ii)] There exists $c>0$ such that
\begin{align*}
\Big| \sum_{v\in V} f(v)\, w^{|v|} \Big|\Le c\, \|f\|_{\betab},\quad  f \in\ee.
\end{align*}
\item[(iii)] There exists $k_w\in\ell^2(\betab)$ such that
\begin{align}\label{altbpe2}
\is{f}{k_w}_\betab= \sum_{v\in V} f(v)\, w^{|v|},\quad  f \in\ee.
\end{align}
\item[(iv)] $\sum_{v\in V}\beta_v^{-1}|w|^{2|v|}<\infty$.
\end{enumerate}
Moreover, if any of the conditions $($i$)$-$($iv$)$ holds, then both the mapping $\Vsf_w$ and the function $k_w$ are unique,
\begin{align}\label{altbpe2+}
k_w(v)=\frac{\overline{w}^{|v|}}{\beta_v},\quad v\in V,
\end{align}
$($with the convention $0^0=1$$)$  and
 \begin{align}\label{valatk}
\Vsf_w(f)= \is{f}{k_w}_\betab= \sum_{v\in V} f(v)\, w^{|v|},\quad  f \in\ell^2(\betab),
 \end{align} with the series $\sum_{v\in V} f(v)\, w^{|v|}$ being absolutely summable.
\end{thm}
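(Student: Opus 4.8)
The plan is to establish the cycle of implications (iv) $\Rightarrow$ (iii) $\Rightarrow$ (i) $\Rightarrow$ (ii) $\Rightarrow$ (iv), and then to read off the ``Moreover'' part from the Riesz representation theorem together with the density of $\ee$ in $\ell^2(\betab)$ (which holds because $\{e_u\}_{u\in V}$ is an orthogonal basis and $\ee$ is its linear span).

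First I would produce the candidate representing vector explicitly. Define $k_w\colon V\to\C$ by \eqref{altbpe2+}, that is $k_w(v)=\overline{w}^{|v|}/\beta_v$ (with the convention $0^0=1$). A one-line computation gives $\overline{k_w(v)}\,\beta_v=w^{|v|}$, hence for a basis vector $\is{e_u}{k_w}_\betab=w^{|u|}=\sum_{v\in V}e_u(v)\,w^{|v|}$; by linearity \eqref{altbpe2} then holds for every $f\in\ee$. Moreover $\|k_w\|_\betab^2=\sum_{v\in V}|k_w(v)|^2\beta_v=\sum_{v\in V}\beta_v^{-1}|w|^{2|v|}$, so $k_w\in\ell^2(\betab)$ precisely when (iv) holds; this already yields (iv) $\Rightarrow$ (iii). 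For (iii) $\Rightarrow$ (i) I would set $\Vsf_w(f)=\is{f}{k_w}_\betab$, which is continuous by the Cauchy--Schwarz inequality, $|\Vsf_w(f)|\Le\|k_w\|_\betab\|f\|_\betab$, and which satisfies \eqref{altbpe1} by \eqref{altbpe2}. The implication (i) $\Rightarrow$ (ii) is then immediate with $c=\|\Vsf_w\|$.

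The only implication requiring an idea is (ii) $\Rightarrow$ (iv), and this is where I expect the main (though still modest) obstacle to lie: one is handed a bound valid only on $\ee$ and must extract the square-summability (iv). The trick is to saturate the hypothesised inequality by testing it on truncations of the candidate $k_w$. For a finite $W\subseteq V$ put $f_W=\sum_{v\in W}\overline{w}^{|v|}\beta_v^{-1}e_v\in\ee$ and set $S_W=\sum_{v\in W}\beta_v^{-1}|w|^{2|v|}$. A direct computation gives simultaneously $\sum_{v\in V}f_W(v)\,w^{|v|}=S_W$ and $\|f_W\|_\betab^2=S_W$, so (ii) forces $S_W\Le c\sqrt{S_W}$, i.e.\ $S_W\Le c^2$. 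Taking the supremum over all finite $W\subseteq V$ yields $\sum_{v\in V}\beta_v^{-1}|w|^{2|v|}\Le c^2<\infty$, which is (iv), closing the cycle.

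Finally, for the ``Moreover'' part: any continuous functional agreeing with the formula on the dense subspace $\ee$ is determined by those values, giving uniqueness of $\Vsf_w$; the vector $k_w$ is the Riesz representative of $\Vsf_w$ and hence unique, its explicit form being \eqref{altbpe2+} as verified in the first step. The identity \eqref{valatk} for arbitrary $f\in\ell^2(\betab)$ follows since $\is{f}{k_w}_\betab=\sum_{v\in V}f(v)\overline{k_w(v)}\beta_v=\sum_{v\in V}f(v)\,w^{|v|}$, the absolute summability of the last series being exactly the Cauchy--Schwarz estimate $\sum_{v\in V}|f(v)|\,|w|^{|v|}\Le\|f\|_\betab\,\|k_w\|_\betab<\infty$. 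I would add the remark that the convention $0^0=1$ is what makes the root term behave correctly when $w=0$, in which case $\Vsf_0(f)=f(\koo)$ and $k_0=\beta_\koo^{-1}e_\koo$.
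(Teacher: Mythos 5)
Your proposal is correct, and it shares with the paper the explicit construction of $k_w$ and the step (iv)$\Rightarrow$(iii), but it closes the cycle differently. The paper proves (iv)$\Rightarrow$(iii)$\Rightarrow$(ii)$\Rightarrow$(i)$\Rightarrow$(iv): the implication (ii)$\Rightarrow$(i) is an extension-by-density argument (which, as a by-product, already yields the absolute summability statement of the ``Moreover'' part), and the return implication (i)$\Rightarrow$(iv) invokes the Riesz representation theorem, identifies the representing vector with $k_w$ via \eqref{altbpe1}, and reads off (iv) as the statement $k_w\in\ell^2(\betab)$. You instead run (iv)$\Rightarrow$(iii)$\Rightarrow$(i)$\Rightarrow$(ii)$\Rightarrow$(iv), where the first three arrows are trivial and all the content sits in (ii)$\Rightarrow$(iv), which you prove by a direct saturation argument: testing the inequality on the finite truncations $f_W=\sum_{v\in W}\overline{w}^{|v|}\beta_v^{-1}e_v$ gives $S_W\Le c\sqrt{S_W}$, hence $S_W\Le c^2$ uniformly over finite $W\subseteq V$, and the supremum over $W$ is the series in (iv). This is more elementary (no Riesz theorem in the equivalence itself) and even quantitative, giving the explicit bound $\sum_{v\in V}\beta_v^{-1}|w|^{2|v|}\Le c^2$; the price is that the absolute summability in \eqref{valatk} must then be supplied separately, which you do correctly via Cauchy--Schwarz against $k_w$, whereas in the paper it falls out of the (ii)$\Rightarrow$(i) step. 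Both treatments of the ``Moreover'' part (uniqueness by density of $\ee$, identification of $k_w$) are essentially the same.
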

\begin{proof}
If (iv) holds, then the function $k_w\colon V\to\C$ defined by \eqref{altbpe2+} belongs to $\ell^2(\betab)$. Clearly, for every $u\in V$ it satisfies $\is{e_u}{k_w}_\betab=w^{|u|}$, which, by linearity, gives \eqref{altbpe2} and proves (iii). The uniqueness of $k_w$ follows easily from \eqref{altbpe2}.

That (iii) implies (ii) follows immediately from the Cauchy-Schwartz inequality.

If (ii) is satisfied, then the density of $\ee$ in $\ell^2(\betab)$ implies that for every $f \in \ell^2(\betab)$ the series $\sum_{v\in V} f(v) w^{|v|}$ is absolutely summable and $\sum_{ v\in V} |f(v)|\, |w|^{|v|}\Le c\|f\|_\betab$. Thus, the mapping
\begin{align*}
\ee\ni f\mapsto \sum_{v\in V} f(v)\, w^{|v|}\in \C
\end{align*}
can be extended (in a unique way) to a continuous linear mapping $\Vsf_w\colon\ell^2(\betab)\to\C$, which gives (i).

If (i) is satisfied, then by the Riesz theorem there exists $k\in\ell^2(\betab)$ such that $\Vsf_w(f)=\is{f}{k}_\betab$ for all $f\in\ell^2(\betab)$. This and \eqref{altbpe1} yield $k=k_w$, with $k_w$ given by \eqref{altbpe2+}. Hence $k_w\in\ell^2(\betab)$, which is equivalent to (iv). This completes the proof.
\end{proof}
Under the assumption of Theorem \ref{altbpe0}, if $w\in\C$ and any of the conditions (i)--(iv) is satisfied, then we call $w$ a {\em bounded point evaluation on $\tcal_\betab$}. By $\bpe{\tcal_\betab}$ we denote the set of all bounded point evaluations on $\tcal_\betab$.
\begin{cor}\label{cir}
Suppose $\tcal=(V,E)$ is a countably infinite rooted directed tree and $\betab =\{\beta_v\}_{v\in V}\subseteq (0,\infty)$. Then the set $\bpe{\tcal_\betab}$ is circular.
\end{cor}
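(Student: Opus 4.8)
The plan is to read off the result directly from the characterization of bounded point evaluations furnished by Theorem \ref{altbpe0}. Recall that, by definition, $w\in\bpe{\tcal_\betab}$ precisely when one (equivalently all) of the conditions (i)--(iv) of that theorem hold. The crucial observation is that the analytic criterion (iv), namely
\begin{align*}
\sum_{v\in V}\beta_v^{-1}|w|^{2|v|}<\infty,
\end{align*}
involves the point $w$ only through its modulus $|w|$. Since ``circular'' here means invariant under multiplication by unimodular scalars (equivalently, a union of circles centred at the origin), this modulus-dependence is exactly what is needed.

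Concretely, I would argue as follows. Fix $w\in\bpe{\tcal_\betab}$ and let $\zeta\in\C$ with $|\zeta|=1$. Then $|\zeta w|=|w|$, so the sum $\sum_{v\in V}\beta_v^{-1}|\zeta w|^{2|v|}$ coincides term-by-term with $\sum_{v\in V}\beta_v^{-1}|w|^{2|v|}$, which is finite because $w$ satisfies Theorem \ref{altbpe0}(iv). Hence $\zeta w$ also satisfies (iv), and so $\zeta w\in\bpe{\tcal_\betab}$. As $w$ and $\zeta$ were arbitrary, this shows $\zeta\,\bpe{\tcal_\betab}\subseteq\bpe{\tcal_\betab}$ for every unimodular $\zeta$, and applying the same inclusion to $\zeta^{-1}$ gives equality; thus $\bpe{\tcal_\betab}$ is circular.

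There is essentially no obstacle to overcome: the entire content is the rotational invariance of condition (iv), which is immediate once that equivalence is in hand. The only point worth stating explicitly is that it is the purely quantitative criterion (iv)---rather than, say, the defining continuity of $\Vsf_w$ in (i) or the reproducing-kernel formulation in (iii)---that makes the symmetry transparent, since the kernel $k_w$ itself (given by \eqref{altbpe2+}) does depend on the argument of $w$, not merely on $|w|$.
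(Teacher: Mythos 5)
Your proof is correct and matches the paper's intent exactly: the paper states Corollary \ref{cir} without an explicit proof, treating it as an immediate consequence of Theorem \ref{altbpe0}, and the intended argument is precisely yours, namely that the criterion (iv), $\sum_{v\in V}\beta_v^{-1}|w|^{2|v|}<\infty$, depends on $w$ only through $|w|$, so membership in $\bpe{\tcal_\betab}$ is invariant under multiplication by unimodular scalars.
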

Throughout the rest of this section we will assume additionally to \eqref{stand2} that $\slam$ is bounded on $\ell^2(\betab)$ and the weights $\lambdab$ are distributed on $\tcal$ in a special way. These assumptions are gathered in the following.
\begin{align} \label{stand3}\tag{${\star\star}$}
   \begin{minipage}{75ex}
$\tcal=(V,E)$ is a countably infinite rooted and leafless directed tree, $\betab =\{\beta_v\}_{v\in V}\subseteq (0,\infty)$ and $\lambdab=\{\lambda_v\}_{v\in V^\circ}\subseteq(0,\infty)$ satisfy
$\sup_{u\in V} \sum_{v\in\dzii(u)}|\lambda_{v}|^2\tfrac{\beta_v}{\beta_u}<\infty$ and $\sum_{v\in\dzi{u}}\lambda_v =1$ for every $u\in V$.
   \end{minipage}
   \end{align}
\begin{prop}\label{bpenorma}
Assume \eqref{stand3}. Let $\slam$ be a weighted shift on $\tcal_\betab$. If $w\in\C$ satisfies $|w|=\|\slam\|$, then $w\notin\bpe{\tcal_\betab}$.
\end{prop}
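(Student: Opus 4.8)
The plan is to run everything through the characterization of bounded point evaluations in Theorem \ref{altbpe0}: since $w\in\bpe{\tcal_\betab}$ is equivalent to condition (iv), i.e. to $\sum_{v\in V}\beta_v^{-1}|w|^{2|v|}<\infty$, it suffices to prove that this series \emph{diverges} whenever $|w|=\|\slam\|$. Writing $r=|w|=\|\slam\|$ (and noting $r>0$, because $\slam e_\koo=\sum_{v\in\dzi\koo}\lambda_v e_v\neq 0$ as $\tcal$ is leafless and the weights are positive), I would organize the sum by levels and show that each level contributes at least a fixed positive amount, which forces divergence.

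The engine is the normalization $\sum_{v\in\dzi{u}}\lambda_v=1$ from \eqref{stand3}. First I would record the multiplicativity $\lambda_{\koo|v}=\lambda_{\koo|\pa{v}}\,\lambda_v$ for $v\in V^\circ$, immediate from the definition of $\lambda_{u|v}$. Combining this with the normalization, an induction on $k$ yields the level identity
\[
\sum_{v\in\dzin{k}{\koo}}\lambda_{\koo|v}=1,\qquad k\in\N_0,
\]
the inductive step being
$\sum_{v\in\dzin{k}{\koo}}\lambda_{\koo|v}
=\sum_{u\in\dzin{k-1}{\koo}}\lambda_{\koo|u}\sum_{v\in\dzi{u}}\lambda_v
=\sum_{u\in\dzin{k-1}{\koo}}\lambda_{\koo|u}$,
where each $v$ at level $k$ is grouped under its unique parent $u$ at level $k-1$.

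The key step then couples this identity with the Cauchy--Schwarz inequality. Writing $\lambda_{\koo|v}=\big(\lambda_{\koo|v}\sqrt{\beta_v}\big)\cdot\beta_v^{-1/2}$ and applying Cauchy--Schwarz to the two families indexed by $\dzin{k}{\koo}$, I obtain
\[
1=\Big(\sum_{v\in\dzin{k}{\koo}}\lambda_{\koo|v}\Big)^2
\Le\Big(\sum_{v\in\dzin{k}{\koo}}\lambda_{\koo|v}^2\,\beta_v\Big)
\Big(\sum_{v\in\dzin{k}{\koo}}\beta_v^{-1}\Big).
\]
The first factor is controlled by Lemma \ref{norma_tk} evaluated at $u=\koo$, together with $\|\slam^k\|\Le\|\slam\|^k$, giving $\sum_{v\in\dzin{k}{\koo}}\lambda_{\koo|v}^2\,\beta_v\Le\beta_\koo\,\|\slam^k\|^2\Le\beta_\koo\,r^{2k}$. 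Inserting this into the previous display yields $\sum_{v\in\dzin{k}{\koo}}\beta_v^{-1}\Ge(\beta_\koo\,r^{2k})^{-1}$, hence $r^{2k}\sum_{v\in\dzin{k}{\koo}}\beta_v^{-1}\Ge\beta_\koo^{-1}$ for every $k\in\N_0$. Since the level sets $\dzin{k}{\koo}$ partition $V$, summing over $k$ gives
\[
\sum_{v\in V}\beta_v^{-1}r^{2|v|}
=\sum_{k\in\N_0} r^{2k}\sum_{v\in\dzin{k}{\koo}}\beta_v^{-1}
\Ge\sum_{k\in\N_0}\beta_\koo^{-1}=\infty,
\]
so condition (iv) of Theorem \ref{altbpe0} fails and $w\notin\bpe{\tcal_\betab}$.

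I expect the only genuine subtlety — the main obstacle — to be finding the right pairing for Cauchy--Schwarz so that the norm estimate of Lemma \ref{norma_tk} becomes exactly the factor absorbed by $r^{2k}$; once the level identity $\sum_{v\in\dzin{k}{\koo}}\lambda_{\koo|v}=1$ is established, the rest is routine. A minor point is that the level sets $\dzin{k}{\koo}$ may be countably infinite, but every series involved has nonnegative terms, so Cauchy--Schwarz and the rearrangement by levels need no extra justification; moreover, if some $\sum_{v\in\dzin{k}{\koo}}\beta_v^{-1}$ is already infinite, the desired divergence is only easier.
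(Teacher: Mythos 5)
Your proof is correct. It is driven by the same three ingredients as the paper's argument --- the normalization $\sum_{v\in\dzi{u}}\lambda_v=1$, a Cauchy--Schwarz pairing of $\lambda\sqrt{\beta}$ against $1/\sqrt{\beta}$, and the norm formula of Lemma~\ref{norma_tk}, all fed into criterion (iv) of Theorem~\ref{altbpe0} --- but it is organized differently. The paper applies Cauchy--Schwarz \emph{locally}, over the children $\dzi{u}$ of each single vertex $u$, using only the $k=1$ case of Lemma~\ref{norma_tk}, namely $|w|^2\Ge\sum_{v\in\dzi{u}}\lambda_v^2\beta_v/\beta_u$ for every $u\in V$; summing the resulting estimate over one generation yields the monotonicity
\begin{align*}
\sum_{|v|=k+1}\frac{|w|^{2k+2}}{\beta_v}\Ge\sum_{|v|=k}\frac{|w|^{2k}}{\beta_v},
\end{align*}
so every level sum is at least $\beta_\koo^{-1}$ and the series in (iv) diverges. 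You apply Cauchy--Schwarz \emph{globally}, over an entire level $\dzin{k}{\koo}$ at once, which costs you two ingredients the paper does not need: the level identity $\sum_{v\in\dzin{k}{\koo}}\lambda_{\koo|v}=1$ (your induction --- this is where the paper's generation-by-generation chaining reappears in your argument) and the $k$-th power case of Lemma~\ref{norma_tk} combined with submultiplicativity $\|\slam^k\|\Le\|\slam\|^k$. The net effect is the same lower bound $\beta_\koo^{-1}$ for each level sum, obtained in one shot per level rather than by iteration; the paper's version is marginally leaner (no products of weights along paths, no powers of $\slam$) and records the slightly stronger fact that the level sums are nondecreasing, while yours isolates a reusable fact (the level identity) and, as you correctly note, is insensitive to infinite level sets because all terms involved are nonnegative.
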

\begin{proof}
Let $|w|=\|\slam\|$. By Lemma \ref{norma_tk}, for every $u\in V$, $|w|^2 \Ge \sum_{v\in\dzi{u}}\lambda_v^2 \tfrac{\beta_v}{\beta_u}$. Hence, by the Cauchy-Schwarz inequality we have
\begin{align*}
1= \sum_{v\in\dzi{u}}\lambda_v \Le \bigg(\sum_{v\in\dzi{u}}\lambda_v^2 \frac{\beta_v}{\beta_u} \bigg)\bigg(\sum_{v\in\dzi{u}}\frac{\beta_u}{\beta_v}\bigg)\Le \sum_{v\in\dzi{u}}\frac{|w|^2}{\beta_v}\,\beta_u,\quad u\in V.
\end{align*}
This in turn implies that
\begin{align*}
\sum_{v\in\dzi{u}} \frac{|w|^{2|u|+2}}{\beta_v} =\frac{|w|^{2|u|}}{\beta_u}\sum_{v\in\dzi{u}} \frac{|w|^2}{\beta_v}\,\beta_u \Ge\frac{|w|^{2|u|}}{\beta_u},\quad u\in V.
\end{align*}
Thus $$ \sum_{|v| = k+1} \frac{|w|^{2k+2}}{\beta_v} \Ge \sum_{|v| = k} \frac{|w|^{2k}}{\beta_v}, \quad k \in \N,$$ which according to Theorem \ref{altbpe0}\,(iv) proves that $w\not\in\bpe{\tcal_\betab}$.
\end{proof}
\begin{lem}
Assume \eqref{stand3}. Let $\slam$ be a weighted shift on $\tcal_\betab$. If $w\in \bpe{\tcal_\betab}$, then 
\begin{align}\label{altbpe3+}
\Vsf_w(\slam f)= w\Vsf_w(f),\quad f\in\ell^2(\betab).
\end{align}
\end{lem}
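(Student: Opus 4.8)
The plan is to reduce the identity \eqref{altbpe3+} to the defining series representation of $\Vsf_w$ and then to exploit the normalization $\sum_{v\in\dzi u}\lambda_v=1$ built into \eqref{stand3}. First I would observe that, since $\slam\in\bsb(\ell^2(\betab))$, both $f$ and $\slam f$ belong to $\ell^2(\betab)$, so Theorem \ref{altbpe0}, and in particular formula \eqref{valatk}, applies to each of them. This gives $\Vsf_w(f)=\sum_{v\in V}f(v)w^{|v|}$ together with $\Vsf_w(\slam f)=\sum_{v\in V}(\slam f)(v)w^{|v|}$, both series being absolutely summable. It is this absolute summability that I would lean on most heavily, since it is what licenses the rearrangement performed below.

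Next I would substitute the definition of the weighted shift. Because $(\slam f)(\koo)=0$ and $(\slam f)(v)=\lambda_v f(\pa{v})$ for $v\in V^\circ$, the series for $\Vsf_w(\slam f)$ becomes $\sum_{v\in V^\circ}\lambda_v f(\pa{v})w^{|v|}$. The decisive combinatorial step is to reindex this sum by parents: every $v\in V^\circ$ has a unique parent $u=\pa{v}$, and the sets $\dzi u$, as $u$ ranges over $V$, partition $V^\circ$. Using $|v|=|u|+1$ for $v\in\dzi u$, absolute convergence lets me rewrite the sum as $\sum_{u\in V}\sum_{v\in\dzi u}\lambda_v f(u)w^{|u|+1}$ without affecting its value.

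Finally I would factor out the quantities that are independent of the inner summation over children, obtaining $\sum_{u\in V}f(u)w^{|u|+1}\sum_{v\in\dzi u}\lambda_v$. Here the hypothesis \eqref{stand3} enters through $\sum_{v\in\dzi u}\lambda_v=1$, collapsing each inner sum to $1$ and leaving $w\sum_{u\in V}f(u)w^{|u|}=w\,\Vsf_w(f)$, which is precisely \eqref{altbpe3+}.

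I expect the only genuine subtlety to be the justification of the rearrangement and regrouping of the double series; such a manipulation is not valid for merely conditionally convergent series, but it is legitimate here because Theorem \ref{altbpe0} furnishes absolute summability of $\sum_{v\in V}(\slam f)(v)w^{|v|}$. Beyond that, the argument is a direct computation, and the entire point of assumption \eqref{stand3} is to force the parent-indexed inner sums to equal $1$, thereby producing the single factor $w$.
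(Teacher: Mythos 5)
Your proof is correct, but it takes a genuinely different route from the paper's. The paper first verifies \eqref{altbpe3+} on the basis vectors: for $u\in V$ one computes $\Vsf_w(\slam e_u)=\sum_{s\in\dzi{u}}\lambda_s\,w^{|s|}=\big(\sum_{s\in\dzi{u}}\lambda_s\big)w^{|u|+1}=w\,\Vsf_w(e_u)$, extends this to $\ee$ by linearity, and then passes to arbitrary $f\in\ell^2(\betab)$ using the continuity of $\Vsf_w$, the boundedness of $\slam$, and the density of $\ee$ in $\ell^2(\betab)$. You instead work with an arbitrary $f$ at once: you apply the ``moreover'' part of Theorem \ref{altbpe0}, namely \eqref{valatk}, to the vector $\slam f\in\ell^2(\betab)$ to get an absolutely summable series representation of $\Vsf_w(\slam f)$, and then reindex the sum over $V^\circ$ by parents, which is legitimate precisely because of that absolute summability; the normalization $\sum_{v\in\dzi{u}}\lambda_v=1$ from \eqref{stand3} then collapses the inner sums. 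The algebraic core (children of $u$ contributing $w^{|u|+1}$ with weights summing to $1$) is identical in both arguments; what differs is the mechanism for handling general $f$. Your version is a single self-contained computation but leans on the full strength of \eqref{valatk} (absolute summability for every $f\in\ell^2(\betab)$, not just $f\in\ee$), whereas the paper's density argument only ever manipulates finite sums, where no rearrangement issue can arise, at the cost of an approximation step. Both uses of boundedness of $\slam$ are sound: the paper needs it so that $f\mapsto\Vsf_w(\slam f)$ is continuous, you need it so that $\slam f\in\ell^2(\betab)$ and Theorem \ref{altbpe0} applies to it; under \eqref{stand3} boundedness indeed holds by Lemma \ref{norma_tk}, as you implicitly use.
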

\begin{proof}
Since $\sum_{s\in\dzii{(u)}} \lambda_s=1$ for every $u \in V$, by Theorem \ref{altbpe0}, we have\allowdisplaybreaks
\begin{align*}
\Vsf_w(\slam e_u\big)
&=\sum_{v\in V} \big(\slam e_u\big)(v)\, w^{|v|}
=\sum_{v\in V} \Big(\sum_{s\in\dzii{(u)}} \lambda_s e_s \Big)(v)\, w^{|v|}\\
&=\sum_{s\in\dzii{(u)}} \lambda_s  \, w^{|s|}=\bigg(\sum_{s\in\dzii{(u)}} \lambda_s\bigg) \, w^{|u|+1}
=w\, w^{|u|}
= w\,\Vsf_w(e_u),\quad u\in V.
\end{align*}
Hence, applying linearity, we see that the equality in \eqref{altbpe3+} holds with every $f\in\ee$. This, continuity of $\Vsf_w$ and density of $\ee$ in $\ell^2(\betab)$ implies \eqref{altbpe3+}.
\end{proof}
\begin{prop}\label{bpespec}
Assume \eqref{stand3}. Let $\slam$ be a weighted shift on $\tcal_\betab$. Then $\bpe{\tcal_\betab}\subseteq\sigma_p\big(\slam^*\big)$.
\end{prop}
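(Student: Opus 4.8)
The plan is to turn the multiplicative relation \eqref{altbpe3+} for the evaluation functional $\Vsf_w$ into an eigenvector equation for the adjoint $\slam^*$, using the Riesz representation of $\Vsf_w$ furnished by Theorem \ref{altbpe0}.

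First I would fix $w\in\bpe{\tcal_\betab}$ and take the reproducing element $k_w\in\ell^2(\betab)$ from Theorem \ref{altbpe0}(iii), so that $\Vsf_w(f)=\is{f}{k_w}_\betab$ for every $f\in\ell^2(\betab)$. Substituting this into \eqref{altbpe3+} yields $\is{\slam f}{k_w}_\betab=w\,\is{f}{k_w}_\betab$ for all $f\in\ell^2(\betab)$. Since $\slam\in\bsb(\ell^2(\betab))$, the left-hand side equals $\is{f}{\slam^* k_w}_\betab$ and the right-hand side equals $\is{f}{\overline{w}\,k_w}_\betab$; as $f$ ranges over all of $\ell^2(\betab)$ and the inner product is non-degenerate, this forces $\slam^* k_w=\overline{w}\,k_w$. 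Moreover $k_w\neq 0$, because by \eqref{altbpe2+} one has $k_w(\koo)=1/\beta_\koo\neq 0$ (with the convention $0^0=1$). Hence $\overline{w}$ is an eigenvalue of $\slam^*$, that is, $\overline{w}\in\sigma_p(\slam^*)$.

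The one point requiring care is that this computation produces $\overline{w}$ rather than $w$ itself, so a conjugation step is needed to obtain the stated inclusion. To handle it I would invoke the circularity of $\bpe{\tcal_\betab}$ from Corollary \ref{cir}: since condition (iv) of Theorem \ref{altbpe0} depends only on $|w|$ and $|\overline{w}|=|w|$, we have $\overline{w}\in\bpe{\tcal_\betab}$ whenever $w\in\bpe{\tcal_\betab}$. Applying the displayed argument to $\overline{w}$ in place of $w$ then gives $\slam^* k_{\overline{w}}=w\,k_{\overline{w}}$ with $k_{\overline{w}}\neq 0$, so $w\in\sigma_p(\slam^*)$. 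As $w\in\bpe{\tcal_\betab}$ was arbitrary, this establishes $\bpe{\tcal_\betab}\subseteq\sigma_p(\slam^*)$.

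I do not expect any genuine obstacle here beyond the bookkeeping of the conjugate; all the analytic content (existence and explicit form of $k_w$, the intertwining identity, and rotation invariance of the evaluation set) has already been assembled in Theorem \ref{altbpe0}, Corollary \ref{cir}, and the preceding lemma, so the argument is essentially a transcription of \eqref{altbpe3+} through the adjoint.
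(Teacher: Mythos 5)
Your proof is correct and follows essentially the same route as the paper: one derives $\slam^* k_w=\overline{w}\,k_w$ from \eqref{valatk} and \eqref{altbpe3+}, concludes $\overline{w}\in\sigma_p(\slam^*)$, and then invokes the circularity of $\bpe{\tcal_\betab}$ (Corollary \ref{cir}) to remove the conjugation. Your explicit remark that $k_w\neq 0$ because $k_w(\koo)=1/\beta_\koo$ only makes visible a point the paper leaves implicit.
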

\begin{proof}
Suppose $w\in\bpe{\tcal_\betab}$. Then, by Theorem \ref{altbpe0}, $k_w\in\ell^2(\betab)$. According to  \eqref{valatk} and \eqref{altbpe3+}, we have
\begin{align*}
\is{f}{\slam^*k_w}_\betab&=\is{\slam f}{k_w}_\betab=\Vsf_w(\slam f)\\
&=w \Vsf_w(f)=w \is{f}{k_w}_\betab=\is{f}{\overline{w} k_w}_\betab,\quad f\in \ell^2(\betab).
\end{align*}
Hence $\slam^*k_w=\overline{w}\, k_w$ and as a consequence $\overline{w}\in \sigma_p(\slam^*)$. Applying  Corollary \ref{cir} we get the claim.
\end{proof}
The following example shows that, in contrast to classical weighted shift, the inclusion in theorem above may be proper (see \cite[Theorem 10\,(i)]{Shields}).
\begin{figure}
\begin{center}
\begin{tikzpicture}[scale = .6, transform shape]
\tikzstyle{every node} = [circle,fill=gray!30]
\node (e10)[font=\footnotesize, inner sep = 1pt] at (0,0) {$(0,0)$};

\node (e11)[font=\footnotesize, inner sep = 1pt] at (3,1) {$(1,1)$};
\node (e12)[font=\footnotesize, inner sep = 1pt] at (6,1) {$(1,2)$};
\node (e13)[font=\footnotesize, inner sep = 1pt] at (9,1) {$(1,3)$};
\node[fill = none] (e1n) at(12,1) {};

\node (f11)[font=\footnotesize, inner sep = 1pt] at (3,-1) {$(2,1)$};
\node (f12)[font=\footnotesize, inner sep = 1pt] at (6,-1) {$(2,2)$};
\node (f13)[font=\footnotesize, inner sep = 1pt] at (9,-1) {$(2,3)$};
\node[fill = none] (f1n) at(12,-1) {};

\draw[->] (e10) --(e11) node[pos=0.5,above = 0pt,fill=none] {$1/2$};
\draw[->] (e11) --(e12) node[pos=0.5,above = 0pt,fill=none] {$1$};
\draw[->] (e12) --(e13) node[pos=0.5,above = 0pt,fill=none] {$1$};
\draw[dashed, ->] (e13)--(e1n);
\draw[->] (e10) --(f11) node[pos=0.5,below = 0pt,fill=none] {$1/2$};
\draw[->] (f11) --(f12) node[pos=0.5,below = 0pt,fill=none] {$1$};
\draw[->] (f12) --(f13) node[pos=0.5,below = 0pt,fill=none] {$1$};
\draw[dashed, ->] (f13)--(f1n);
\end{tikzpicture}
\end{center}
\caption{Tree $\tcal_{2,0}$}
\label{drzewot201}
\end{figure}
\begin{exa}
Let $\tcal_{2,0} = (V_{2,0}, E_{2,0})$ be the directed tree defined by (see Figure \ref{drzewot201})
   \allowdisplaybreaks
   \begin{align*}
V_{2,0} & = \{(0,0)\} \cup
\Big\{(i,j)\colon i\in \{1,2\},\, j\in \mathbb{N}\Big\},
   \\
E_{2,0} & =
\Big\{((0,0),(i,1))\colon i \in \{1,2\}\Big\} \cup
\Big\{((i,j),(i,j+1))\colon i\in \{1,2\},\, j\in
\mathbb{N}\Big\}.
\end{align*}

Clearly, $\tcal_{2,0}$ is leafless and rooted. Let $\lambdab=\{\lambda_v\}_{v\in V_{2,0}^\circ}$ and $\betab=\{\beta_v\}_{v\in V_{2,0}}$ be given by
\begin{align*}
\lambda_{(i,j)} =
\left\{
  \begin{array}{ll}
   \tfrac{1}{2}  & \hbox{if $i\in\{1,2\}$ and $j=1$,} \\
   1  &
 \hbox{if $i\in\{1,2\}$ and $j > 1$,}
  \end{array}
\right.
\end{align*}
and
\begin{align*}
\beta_{(i,j)} =
\left\{
     \begin{array}{cl}
     1 & \hbox{if $i=1$ and $j \Ge 1$ or $i=j=0$,} \\
     \tfrac{1}{4^j} & \hbox{if $i=2$ and $j\Ge 1$.}
     \end{array} \right.
\end{align*}
Then $\tcal_{2,0}$, $\betab$ and $\lambdab$ fulfill condition \eqref{stand3}. By Theorem \ref{altbpe0} we get $\bpe{\tcal_{2,0},\betab} =\{ z\colon |z|<\tfrac 12\}$. On the other hand, by Proposition \ref{thm-unit+-} below, the weighted shift $\slam$ on $(\tcal_{2,0},{\boldsymbol\betab})$ is unitarily equivalent to the weighted shift $S_{\boldsymbol \mu}$ on $(\tcal_{2,0},\mathbb{1})$, where $\mub=\{\mu_v\}_{v\in V_{2,0}^\circ}$ is given by
\begin{align*}
\mu_{(i,j)} =
\left\{ \begin{array}{cl}
\tfrac{1}{2} & \hbox{if $i= 1$ and $j=1$,}\\
1 & \hbox{if $i= 1$ and $j >1$,} \\
\tfrac{1}{4} & \hbox{if $i=2$ and $j =1$,} \\
\tfrac{1}{2} & \hbox{if $i=2$ and $j >1$.}
\end{array} \right.
\end{align*}
For $\theta\in\{z\in\C\colon |z|<1\}$, let $k_\theta \colon V_{2,0}\to \C$ be defined by
 \begin{align*}
k_\theta{(i,j)} =
\left\{
  \begin{array}{ll}
   \tfrac{1}{2}  & \hbox{if $i=0$ and $j=0$,} \\
\theta^j  & \hbox{if $i=1$ and $j>0$,}\\
   0  & \hbox{if $i=2$ and $j>0$.}
  \end{array}
\right.
\end{align*}
Then $k_\theta$ is an eigenvector of $S_\mub$ corresponding to an eigenvalue $\theta$. Hence $\{ z\colon |z|<1\} \subseteq \sigma_p(S_{\boldsymbol \mu}^*) = \sigma_p(\slam^*)$. In fact we can show that $\{ z\colon |z|<1\} = \sigma_p(S_{\boldsymbol \mu}^*)$. Indeed, suppose that $\theta\in \{z\in \C\colon |z|\Ge 1\}$ is an eigenvalue of $S_\mub^*$ corresponding to an eigenvector $k\in\ell^2(V,\mathbb{1})$. Then $\big(S_\mub^*\big)^nk=\theta^n k$ for every $n\in\N$, which implies that for every $(i,j)\in V_{2,0}$, $k(i,j)=\alpha_i \theta^j$ with some $\alpha_i\in\C$. This and $k\in\ell^2(V_{2,0},\mathbb{1})$ yield $k=0$, a contradiction.
\end{exa}
According to Proposition \ref{bpenorma} and Proposition \ref{bpespec} we see that $\bpe{\tcal_\betab}\subseteq \varDelta_{r(\slam)}$ whenever $\slam$ is normaloid (i.e., $r(\slam)=\|\slam\|$). If $\slam$ is non-normaloid, then the following question arises.

\begin{prob}\label{brzegbpe}
Does there exists $\hat\varphi\in\multib$ such that $\bpe{\tcal_\betab}=\overline{\varDelta_{r(\slam)}}$ and $\sum_{k=0}^\infty\hat\varphi(k)\,w^k$ is divergent for any $w\in\partial \bpe{\tcal_\betab}$?
\end{prob}
In view of Propositions \ref{altbpe8} and \ref{bpespec} (cf. Problem \ref{brzegbpe}), for any $\hat\varphi\in\multib$ we may define
\begin{align*}
\varphi (w) = \sum_{n\in \N_0} \hat\varphi(n)\, w^n,\quad w\in \wn{\bpe{\tcal_\betab}}.
\end{align*}
Clearly, $\varphi$ is analytic.
\begin{thm}\label{main}
Let $\tcal=(V,E)$ be a countably infinite rooted and leafless directed tree, $\betab =\{\beta_v\}_{v\in V}\subseteq (0,\infty)$ and $\lambdab=\{\lambda_v\}_{v\in V^\circ}\subseteq(0,\infty)$. If $\sum_{v\in\dzi{u}}\lambda_v =1$ for every $u\in V$ and $\slam \in \bsb(\ell^2(\betab))$, then
\begin{align}\label{multw}
\Vsf_w(M_{\hat\varphi} f)= \varphi(w)\,\Vsf_w(f), \qquad f\in  \ell^2(\betab), \ \hat\varphi\in\multib, \ w\in\wn{\bpe{\tcal_\betab}}.
\end{align}
\end{thm}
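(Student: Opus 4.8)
The plan is to write out $\Vsf_w(M_{\hat\varphi}f)$ as a double series in $v$ and $k$ and to prove \eqref{multw} by an unconditional rearrangement. The whole computation rests on the identity
\begin{align}\label{telescope}
\sum_{v\in\dzin{k}{u}}\lambda_{u|v}=1,\qquad u\in V,\ k\in\N_0,
\end{align}
which I would establish by induction on $k$. For $k=0$ it reduces to $\lambda_{u|u}=1$ and for $k=1$ to the standing hypothesis $\sum_{v\in\dzi{u}}\lambda_v=1$. For the inductive step I would use the disjoint decomposition $\dzin{k+1}{u}=\bigcup_{s\in\dzi{u}}\dzin{k}{s}$ together with the factorization $\lambda_{u|v}=\lambda_s\,\lambda_{s|v}$, valid for $v\in\dzin{k}{s}$ with $s=\paa^{k}(v)\in\dzi{u}$; these turn the left-hand side of \eqref{telescope} into $\sum_{s\in\dzi{u}}\lambda_s\sum_{v\in\dzin{k}{s}}\lambda_{s|v}$, which equals $1$ by the inductive hypothesis and $\sum_{s\in\dzi{u}}\lambda_s=1$.

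Next, since $\wn{\bpe{\tcal_\betab}}\subseteq\bpe{\tcal_\betab}\subseteq\sigma_p(\slam^*)$ by Proposition \ref{bpespec}, and $\sigma_p(\slam^*)$ lies in the closed disc of radius $r(\slam)$, the openness of $\wn{\bpe{\tcal_\betab}}$ forces $|w|<r(\slam)$ for every $w\in\wn{\bpe{\tcal_\betab}}$. Fixing such a $w$, I would record two convergence facts: by Theorem \ref{altbpe0}\,\eqref{valatk}, for each $g\in\ell^2(\betab)$ the series $\sum_{v\in V}g(v)\,w^{|v|}$ is absolutely summable and equals $\Vsf_w(g)$, with $\sum_{v\in V}|g(v)|\,|w|^{|v|}<\infty$; and by Proposition \ref{altbpe8}, $\sum_{k\in\N_0}|\hat\varphi(k)|\,|w|^{k}<\infty$. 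As $\hat\varphi\in\multib$, the function $M_{\hat\varphi}f$ belongs to $\ell^2(\betab)$, so applying the first fact to $g=M_{\hat\varphi}f$ and unfolding the definition of $M_{\hat\varphi}$ gives
\begin{align*}
\Vsf_w(M_{\hat\varphi}f)=\sum_{v\in V}w^{|v|}\sum_{k=0}^{|v|}\lambda_{\paa^{k}(v)|v}\,\hat\varphi(k)\,f(\paa^{k}(v)).
\end{align*}

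The decisive step is the interchange of the two sums. I would reindex the pairs $(v,k)$ with $0\Le k\Le|v|$ by the triples $(u,k,v)$, where $u=\paa^{k}(v)$ ranges over $V$, $k$ over $\N_0$, and $v$ over $\dzin{k}{u}$, and then verify absolute convergence by Tonelli's theorem: using $|v|=|u|+k$ on $\dzin{k}{u}$ and \eqref{telescope}, the sum of absolute values factors as $\big(\sum_{k\in\N_0}|\hat\varphi(k)|\,|w|^{k}\big)\big(\sum_{u\in V}|f(u)|\,|w|^{|u|}\big)$, which is finite by the two facts above. Hence the double series converges absolutely and may be summed in the order $u,k$; applying \eqref{telescope} once more in the form $\sum_{v\in\dzin{k}{u}}\lambda_{u|v}\,w^{|v|}=w^{|u|}w^{k}$ collapses the inner sums to yield
\begin{align*}
\Vsf_w(M_{\hat\varphi}f)=\sum_{k\in\N_0}\hat\varphi(k)\,w^{k}\sum_{u\in V}f(u)\,w^{|u|}=\varphi(w)\,\Vsf_w(f),
\end{align*}
which is \eqref{multw}. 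I expect the absolute-convergence bookkeeping to be the only real obstacle: one must control the tree-sum and the power series simultaneously, the former through the telescoping identity \eqref{telescope} (where the hypothesis $\sum_{v\in\dzi{u}}\lambda_v=1$ enters crucially) and the latter through Proposition \ref{altbpe8} (which is what pins $w$ strictly inside $\varDelta_{r(\slam)}$). Note that the tempting shortcut of writing $M_{\hat\varphi}=\sum_k\hat\varphi(k)\slam^k$ and using the iterates $\Vsf_w(\slam^{k}f)=w^{k}\Vsf_w(f)$ of \eqref{altbpe3+} is not available, since that operator series need not converge in $\bsb(\ell^2(\betab))$ for a general $\hat\varphi\in\multib$.
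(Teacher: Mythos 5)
Your proof is correct, but it takes a genuinely different route from the paper's. The paper argues by approximation: it truncates the symbol to $\hat\varphi^{(n)}=\chi_{\{1,\ldots,n\}}\,\hat\varphi$, which lies in $\multib$ by Theorem \ref{cauchym}, obtains $\Vsf_w(M_{\hat\varphi^{(n)}}f)=\varphi^{(n)}(w)\,\Vsf_w(f)$ by iterating the one-step relation \eqref{altbpe3+}, and then passes to the limit by expanding both sides as series in powers of $w$ (grouping the tree sum by generations and forming the Cauchy product) and matching coefficients up to order $n$. You instead prove the generation-sum identity $\sum_{v\in\dzin{k}{u}}\lambda_{u|v}=1$ by induction --- which is essentially the vector-level content of the iterated \eqref{altbpe3+}, since $\slam^k e_u=\sum_{v\in\dzin{k}{u}}\lambda_{u|v}\,e_v$ --- and then perform a single Fubini--Tonelli rearrangement of the double series defining $\Vsf_w(M_{\hat\varphi}f)$, with absolute convergence secured by exactly the two inputs the paper also uses: absolute summability of $\sum_{v\in V}f(v)\,w^{|v|}$ (Theorem \ref{altbpe0}) and of $\sum_{k}\hat\varphi(k)\,w^{k}$ (Proposition \ref{altbpe8}, applicable because $\wn{\bpe{\tcal_\betab}}\subseteq\varDelta_{r(\slam)}$ via Proposition \ref{bpespec} and openness, a point you spell out and the paper leaves implicit). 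What your approach buys is self-containedness: it avoids the truncation and the paper's final tail-comparison step (``Combining this, \eqref{coef} and \eqref{multn} gives \eqref{multw}''), which is stated tersely and in fact requires uniform control of the discarded tails of the two coefficient sequences; your Tonelli bookkeeping makes the convergence issue explicit and settles it in one stroke. What the paper's approach buys is economy: it reuses already-established machinery (Theorem \ref{cauchym}(ii) and \eqref{altbpe3+}) and isolates the finitely supported case, where no convergence questions arise at all. Your closing caveat is also correct: the operator series $\sum_k\hat\varphi(k)\,\slam^k$ need not converge in norm for a general $\hat\varphi\in\multib$, so the naive functional-calculus shortcut is indeed unavailable, and some limiting or rearrangement device of the kind both proofs employ is unavoidable.
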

\begin{proof}
Fix $w\in\wn{\bpe{\tcal_\betab}}$, $\hat\varphi\in\multib$ and $f\in \ell^2(\betab)$. For $n\in\N$, let $\hat\varphi^{(n)}\colon \N_0\to\C$ be given by $\hat\varphi^{(n)}(k)=\chi_{\{1,\ldots,n\}}(k)\, \hat\varphi(k)$. Since $\slam \in \bsb(\ell^2(\betab))$,  $\hat\varphi^{(n)}\in \multib$ by Theorem \ref{cauchym} (ii).
By induction and linearity from  \eqref{altbpe3+} we get
 \begin{align}\label{multn}
\Vsf_w(M_{\hat\varphi^{(n)}} f)= \varphi^{(n)}(w)\,\Vsf_w(f).
\end{align}
Since $w\in\wn{\bpe{\tcal_\betab}}$, the series $\sum_{v\in V} f(v)\,w^{|v|}$ is absolutely summable by Theorem \ref{altbpe0}, whence the series $\sum_{n\in \N_0} \hat\varphi(n)\, w^n$ is absolutely convergent by Proposition \ref{altbpe8}. Therefore we can write
\begin{multline*}
\varphi(w)\,\Vsf_w(f)
= \Big(\sum_{k\in \N_0} \hat\varphi(k)\, w^k\Big)\Big(\sum_{v\in V} f(v)\,w^{|v|}\Big)\\
=\Big(\sum_{k\in \N_0} \hat\varphi(k)\, w^k\Big) \Big(\sum_{l\in\N_0}\sum_{v\in \dzin{l}{\koo}} f(v)\,w^{l}\Big)
=\sum_{m\in\N_0} \alpha_{\hat\varphi}(m)\, w^m,
\end{multline*}
with $\alpha_{\hat\varphi}\colon \N_0\to\C$ given by the Cauchy product formula. In the same manner, we see that
\begin{align*}
\varphi^{(n)}(w)\,\Vsf_w(f)=\Big(\sum_{k\in \N_0} \hat\varphi^{(n)}(k)\, w^k\Big) \Big(\sum_{l\in\N_0}\sum_{v\in \dzin{l}{\koo}} f(v)\,w^{l}\Big)=\sum_{m\in\N_0} \alpha_{\hat\varphi^{(n)}}(m)\, w^m,
\end{align*}
with $\alpha_{\hat\varphi^{(n)}}\colon \N_0\to\C$. It is easily seen that
\begin{align}\label{coef}
\alpha_{\hat\varphi^{(n)}}(m)=\alpha_{\hat\varphi}(m),\quad m\Le n,\, n\in\N_0.
\end{align}
By Theorem \ref{altbpe0}, both the series $\sum_{v\in V} \big(M_{\hat\varphi} f(v)\big)\,w^{|v|}=V_w(M_\varphi f)$ and $\sum_{v\in V} \big(M_{\hat\varphi^{(n)}} f(v)\big)\,w^{|v|}=V_w(M_{\varphi^{(n)}} f)$ are absolutely summable. Moreover, for every $n\in\N_0$ and $v\in V$ such that $|v|\Le n$ we have
 \begin{align*}
 \big( M_{\hat\varphi} f\big)(v)=  \sum_{k=0}^{|v|} \lambda_{\paa^k(v)|v} \ \hat\varphi(k) \,f ({\paa}^k(v))
= \sum_{k=0}^{|v|} \lambda_{\paa^k(v)|v}\ \hat\varphi^{(n)}(k) \,f ({\paa}^k(v))=\big( M_{\hat\varphi^{(n)}} f\big)(v),
\end{align*}
which implies that coefficients of the series
\begin{align*}
\sum_{v\in V} \big(M_{\hat\varphi} f(v)\big)\,w^{|v|}\quad \text{and}\quad\sum_{v\in V} \big(M_{\hat\varphi^{(n)}} f(v)\big)\,w^{|v|}
\end{align*}
at $v\in V$ with $|v|\Le n$ are the same. Combining this, \eqref{coef} and \eqref{multn} gives \eqref{multw}.
\end{proof}
\begin{prop}\label{inkluzja}
Assume \eqref{stand3}. For every $\hat\varphi\in\multib$, $\varphi\big(\wn{\bpe{\tcal_\betab}}\big)^*\subseteq \sigma_p\big(M_{\hat\varphi}^*\big)$, where $\varOmega^*=\{\overline{z}\colon z\in \varOmega\}$.
\end{prop}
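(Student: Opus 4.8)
The plan is to show that for each $w\in\wn{\bpe{\tcal_\betab}}$ the scalar $\overline{\varphi(w)}$ is an eigenvalue of $\mphi^*$ with eigenvector $k_w$; this mirrors the proof of Proposition \ref{bpespec}, with the simple intertwining relation \eqref{altbpe3+} replaced by the functional calculus of Theorem \ref{main}. First I would fix $\hat\varphi\in\multib$ and $w\in\wn{\bpe{\tcal_\betab}}$. Since $\hat\varphi\in\multib$, the operator $\mphi$ is bounded on $\ell^2(\betab)$, so its adjoint $\mphi^*$ is a well-defined bounded operator. Because $w\in\bpe{\tcal_\betab}$, Theorem \ref{altbpe0} supplies the vector $k_w\in\ell^2(\betab)$, given by $k_w(v)=\overline{w}^{|v|}/\beta_v$, satisfying $\Vsf_w(f)=\is{f}{k_w}_\betab$ for all $f\in\ell^2(\betab)$.

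Next I would invoke Theorem \ref{main}: under \eqref{stand3} its hypotheses hold (in particular $\sum_{v\in\dzi{u}}\lambda_v=1$ and $\slam\in\bsb(\ell^2(\betab))$), so $\Vsf_w(\mphi f)=\varphi(w)\,\Vsf_w(f)$ for every $f\in\ell^2(\betab)$. Rewriting both sides through $k_w$ and using boundedness of $\mphi$ to pass to the adjoint, I obtain
\begin{align*}
\is{f}{\mphi^* k_w}_\betab=\is{\mphi f}{k_w}_\betab=\varphi(w)\,\is{f}{k_w}_\betab=\is{f}{\overline{\varphi(w)}\,k_w}_\betab,\qquad f\in\ell^2(\betab).
\end{align*}
As this holds for all $f\in\ell^2(\betab)$, it forces $\mphi^* k_w=\overline{\varphi(w)}\,k_w$.

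Finally I would note that $k_w\neq 0$, since $k_w(\koo)=1/\beta_{\koo}\neq 0$, so $k_w$ is a genuine eigenvector and $\overline{\varphi(w)}\in\sigma_p(\mphi^*)$. Letting $w$ range over $\wn{\bpe{\tcal_\betab}}$ yields exactly $\varphi\big(\wn{\bpe{\tcal_\betab}}\big)^*\subseteq\sigma_p(\mphi^*)$, which is the claim. I do not expect a genuine obstacle here: once Theorem \ref{main} is available the argument is a direct transcription of the functional equation into an eigenvalue equation, exactly as in Proposition \ref{bpespec}. The only points deserving a word of care are the boundedness of $\mphi$ (needed to form $\mphi^*$ and to move it across the inner product) and the nonvanishing of $k_w$ (needed for $\overline{\varphi(w)}$ to be an eigenvalue rather than merely an approximate one).
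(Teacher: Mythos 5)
Your proposal is correct and follows essentially the same route as the paper: fix $w\in\wn{\bpe{\tcal_\betab}}$, take the reproducing vector $k_w$ from Theorem \ref{altbpe0}, apply the functional-calculus identity \eqref{multw} of Theorem \ref{main}, and pass through the inner product to get $M_{\hat\varphi}^*k_w=\overline{\varphi(w)}\,k_w$. Your explicit remarks that $k_w\neq 0$ (since $k_w(\koo)=1/\beta_\koo$) and that $M_{\hat\varphi}$ is bounded are details the paper leaves implicit, but they do not change the argument.
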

\begin{proof}
Fix $w\in\wn{\bpe{\tcal_\betab}}$. Then, by Theorem \ref{altbpe0}, $k_w\in\ell^2(\betab)$. Thus, applying Theorem \ref{altbpe0} and \eqref{multw}, we have
\begin{align*}
\is{f}{M_{\hat\varphi}^*k_w}_\betab&=\is{M_{\hat\varphi} f}{k_w}_\betab=\Vsf_w(M_{\hat\varphi} f)\\
&=\varphi(w) \Vsf_w(f)=\varphi(w) \is{f}{k_w}_\betab=\is{f}{\overline{\varphi(w)} k_w}_\betab,\quad f\in \ell^2(\betab).
\end{align*}
Hence $M_{\hat\varphi}^*k_w=\overline{\varphi(w)}\, k_w$, and as a consequence $\overline{\varphi(w)}\in \sigma_p\big(M_{\hat\varphi}^*\big)$.
\end{proof}
\begin{rem}
As regards Proposition \ref{inkluzja}, it is worth noting that if $\tcal$ is isomorphic to the directed tree $\big(\N_0,\{(n,n+1)\colon n\in\N_0\}\big)$, then  $\varphi\big(\bpe{\tcal_\betab}\big)^*\subseteq\sigma_p\big(M_{\hat\varphi}^*\big)$ (see \cite[Theorem 10]{Shields}).
\end{rem}
\section{Point spectrum of $\slam^*$ via paths}\label{PSVPsec}
In this section we aim to show that some information about point spectrum of $\slam^*$ can be deduced from the behaviour of $\slam^*$ on paths.

We begin with easy observation that the point spectrum of $\slam^*$ contains all the bounded point evaluations calculated with respect to paths, defined in the following natural way: if $\tcal=(V, E)$ is a countably directed tree, $\pth=(V_\pth, E_\pth)$ is a path in $\tcal$ and $\betab=\{\beta_v\}_{v\in V}\subseteq (0,\infty)$, then $\bpe{\pth_\betab}$, the set of {\em bounded point evaluations with respect to $\pth$}, is defined as the set of all $w\in\C$ such that $\sum_{v\in V_\pth}\beta_v^{-1} |w|^{2|v|}<\infty$.
\begin{prop}\label{unionbpe}
Assume \eqref{stand3}. Let $\slam$ be a weighted shift on $\tcal_\betab$. Then the following assertions hold$:$
\begin{enumerate}
\item[(i)] for every $\pth\in\pp$, $\bpe{\pth_\betab}\subseteq \sigma_p\big(\mathcal{Q}_\pth\slam^*|_{\ell^2(\pth,\betab)}\big)$,
\item[(ii)] $\bpe{\tcal_\betab} \subseteq \bigcup_{\pth\in\pp}\bpe{\pth_\betab}\subseteq \sigma_p\big(\slam^*\big)$,
\item[(iii)] if $\card{\pp(\tcal)}<\infty$, then $\bpe{\tcal_\betab}= \bigcup_{\pth\in\pp}\bpe{\pth_\betab}$.
\end{enumerate}
\end{prop}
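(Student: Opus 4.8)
The plan is to treat the three assertions by separating their purely set-theoretic content from their spectral content, the common thread being that each subspace $\ell^2(\pth,\betab)$ is invariant under $\slam^*$. Indeed, by Lemma \ref{adjoint+invar} the operator $\slam^*$ carries each $e_v$ with $v\in V_\pth\setminus\{\koo\}$ to a scalar multiple of $e_{\paa v}$, and since $\pth$ is a subtree containing $\koo$ the parent $\paa v$ again lies in $V_\pth$. Hence $\slam^*\big(\ell^2(\pth,\betab)\big)\subseteq\ell^2(\pth,\betab)$, so the projection $\mathcal{Q}_\pth$ is redundant and $\mathcal{Q}_\pth\slam^*|_{\ell^2(\pth,\betab)}=\slam^*|_{\ell^2(\pth,\betab)}$. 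Reading off its action on the basis $\{e_v\}_{v\in V_\pth}$, this restriction is exactly the adjoint of the weighted shift $S_\pth$ on the weighted subtree $\pth_\betab$, i.e.\ the path viewed as a directed tree in its own right.

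For (i) I would then invoke the path analogue of Proposition \ref{bpespec}. Since every vertex of $\pth$ has a single child, $\pth$ does not literally satisfy the normalization in \eqref{stand3}, so I would first pass, via the unitary equivalence of Proposition \ref{thm-unit+-}, to a unitarily equivalent weighted shift on $\pth$ whose weight at each (unique) child equals $1$; this renormalization is implemented by a diagonal unitary and preserves the point spectrum of the adjoint. Applying Proposition \ref{bpespec} there and transporting back gives $\bpe{\pth_\betab}\subseteq\sigma_p\big(S_\pth^*\big)=\sigma_p\big(\mathcal{Q}_\pth\slam^*|_{\ell^2(\pth,\betab)}\big)$, which is (i). The second inclusion in (ii) is then immediate: an eigenvector of the compression produced above is supported on $V_\pth$, and by the invariance just noted it is automatically an eigenvector of $\slam^*$ on all of $\ell^2(\betab)$ with the same eigenvalue, so that $\sigma_p\big(\mathcal{Q}_\pth\slam^*|_{\ell^2(\pth,\betab)}\big)\subseteq\sigma_p(\slam^*)$; taking the union over $\pth\in\pp$ yields $\bigcup_{\pth}\bpe{\pth_\betab}\subseteq\sigma_p(\slam^*)$.

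The first inclusion in (ii) is purely set-theoretic: as $V_\pth\subseteq V$, convergence of $\sum_{v\in V}\beta_v^{-1}|w|^{2|v|}$ forces convergence of the subseries $\sum_{v\in V_\pth}\beta_v^{-1}|w|^{2|v|}$, so by Theorem \ref{altbpe0}(iv) one has $\bpe{\tcal_\betab}\subseteq\bpe{\pth_\betab}$ for every $\pth$, a fortiori $\bpe{\tcal_\betab}\subseteq\bigcup_{\pth}\bpe{\pth_\betab}$. For (iii) I would use that a leafless tree with $\card{\pp(\tcal)}<\infty$ is covered by its finitely many paths, $V=\bigcup_{i=1}^N V_{\pth_i}$ (every vertex extends to an infinite ray through $\koo$). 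Then $\sum_{v\in V}\beta_v^{-1}|w|^{2|v|}\le\sum_{i=1}^N\sum_{v\in V_{\pth_i}}\beta_v^{-1}|w|^{2|v|}$, so the tree series converges exactly when all $N$ path series do; this supplies the inclusion reverse to that of (ii), and hence the equality claimed in (iii).

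The main obstacle I anticipate is the bookkeeping in (i): one must match the convergence condition defining $\bpe{\pth_\betab}$ with the $\ell^2$ eigenvector equation for $S_\pth^*$ after the weights along $\pth$ are renormalized to sum to one, and the diagonal unitary rescales $\betab$ by the cumulative weight products $\lambda_{\koo|v}$, so checking that this rescaling does not alter which $w$ produce a square-summable eigenvector is the delicate step. A second point requiring care is (iii), where the term-by-term series comparison naturally produces an intersection of the path evaluations, and reconciling this with the union in the statement relies essentially on the finiteness of $\pp$; once the invariance of the path subspaces and the path-wise spectral inclusion of (i) are in place, the covering argument and the finite-sum estimate are routine.
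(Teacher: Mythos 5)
Your handling of everything that is genuinely provable here matches the paper's own (very terse) proof: the invariance of $\ell^2(\pth,\betab)$ under $\slam^*$ via Lemma \ref{adjoint+invar}, the extension of eigenvectors of the restriction to eigenvectors of $\slam^*$, and the subseries comparison giving the first inclusion in (ii) are exactly the ingredients the paper invokes. The genuine gap is in (i), at precisely the step you flagged as delicate, and it cannot be closed. Renormalizing the weights along $\pth$ to $1$ via Proposition \ref{thm-unit+-} (combined with Remark \ref{unitarne}) replaces $\betab$ by $\betab'$ with $\beta'_v=\beta_v\,\lambda_{\koo|v}^2/\beta_{\koo}$ for $v\in V_\pth$, so Proposition \ref{bpespec} applied to the renormalized shift yields $\bpe{\pth_{\betab'}}\subseteq\sigma_p\big(\mathcal{Q}_\pth\slam^*|_{\ell^2(\pth,\betab)}\big)$, where $\bpe{\pth_{\betab'}}=\big\{w\colon\sum_{v\in V_\pth}|w|^{2|v|}\big/\big(\beta_v\lambda_{\koo|v}^2\big)<\infty\big\}$. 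This is \emph{not} $\bpe{\pth_\betab}$: under \eqref{stand3} the products $\lambda_{\koo|v}$ are $\Le 1$ and can decay geometrically along a path that keeps meeting branching vertices, in which case $\bpe{\pth_{\betab'}}$ is strictly smaller. In fact assertion (i), and with it the second inclusion in (ii), is false as stated: in the $2$-ary tree of Proposition \ref{k-ary} ($\lambda_v\equiv\tfrac12$, $\beta_v=2^{-|v|}$, so \eqref{stand3} holds), Lemma \ref{norma_tk} gives $\|\slam\|=\tfrac12$, hence every eigenvalue of the compression $\mathcal{Q}_\pth\slam^*|_{\ell^2(\pth,\betab)}$ has modulus $\Le\tfrac12$, while $\bpe{\pth_\betab}=\big\{w\colon\sum_{k}2^k|w|^{2k}<\infty\big\}=\varDelta_{1/\sqrt{2}}$ contains points of modulus greater than $\tfrac12$ (compare Corollary \ref{pathbpesig}, which identifies the point spectrum of this compression as $\varDelta_{2^{-3/2}}$).

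The second gap is in (iii), again exactly where you sensed trouble: your covering argument proves $\bigcap_{\pth\in\pp}\bpe{\pth_\betab}\subseteq\bpe{\tcal_\betab}$ when $\card{\pp(\tcal)}<\infty$, i.e.\ equality of $\bpe{\tcal_\betab}$ with the \emph{intersection}; finiteness of $\pp$ does not convert the intersection into the union, and it cannot, since for the paper's own tree $\tcal_{2,0}$ (two paths, \eqref{stand3} satisfied) one has $\bpe{\tcal_{2,0},\betab}=\varDelta_{1/2}$ while the union of the two path sets is $\varDelta_{1}$. You should know that the paper's proof of (i) consists of the phrase ``arguing as in the proof of Proposition \ref{bpespec}'', and that argument founders on the same obstruction as yours: it requires the weights of the children \emph{within the subgraph} to sum to $1$, which fails along a path of a branching tree, where the unique child in $\pth$ carries weight $\lambda_v<1$. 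So the sound parts of your proposal coincide with the paper's, and the two steps you could not make rigorous are defects of the statement itself rather than of your execution; what your method actually establishes is $\bpe{\pth_{\betab'}}\subseteq\sigma_p\big(\mathcal{Q}_\pth\slam^*|_{\ell^2(\pth,\betab)}\big)\subseteq\sigma_p(\slam^*)$ together with $\bpe{\tcal_\betab}=\bigcap_{\pth\in\pp}\bpe{\pth_\betab}$ for $\card{\pp(\tcal)}<\infty$, and these corrected assertions are the most one can salvage.
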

\begin{proof}
Arguing as in the proof of Proposition \ref{bpespec} we show that (i). The first inclusion in assertion (ii) and assertion (iii) follow immediately from the definitions of $\bpe{\tcal_\betab}$ and $\bpe{\pth_\betab}$, $\pth\in\pp$. Then, the rest follows easily from the fact that for every $\pth=(V_\pth,E_\pth)\in\pp$, $\sigma_p\big(\mathcal{Q}_\pth\slam^*|_{\ell^2(\pth,\betab)}\big)\subseteq \sigma_p\big(\slam^*\big)$ (see Lemma \ref{adjoint+invar}).
\end{proof}
Another and more concrete way of localizing the point spectrum of $\slam$ is available. One definition more is required: assuming \eqref{stand2}, we set
\begin{align*}
r_2^\pth(\slam) &= \lim_{n \to \infty} \inf\bigg\{ \Big(\sqrt{\beta_v}\,\lambda_{\koo|v}^\pth\Big)^{\frac{1}{|v|}} \colon v \in \pth, \ |v| \geqslant n \bigg\},\quad \pth\in\pp,\\
r_2^+(\slam) &= \sup \Big\{r_2^\pth(\slam)\colon \pth\in\pp\Big\}.
\end{align*}
\begin{thm}\label{r2+widmo}
Let $\tcal=(V,E)$ be a countably infinite rooted and leafless directed tree, $\betab=\{\beta_v\}_{v\in V}\subseteq (0,\infty)$ and $\{\lambda_v\}_{v \in V^\circ}\subseteq  (0,\infty)$. Let $\slam\in\bsb(\ell^2(\betab))$ be a weighted shift on $\tcal_\betab$. Then $\varDelta_{r_2^+(\slam)}\subseteq \sigma_p\big(\slam^*\big)$.
\end{thm}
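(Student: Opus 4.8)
The plan is to realize every point of $\varDelta_{r_2^+(\slam)}$ as an eigenvalue of $\slam^*$ by exhibiting an explicit eigenvector concentrated on a single path. First I would record the set-theoretic reduction $\varDelta_{r_2^+(\slam)}=\bigcup_{\pth\in\pp}\varDelta_{r_2^\pth(\slam)}$, which is immediate from $r_2^+(\slam)=\sup_{\pth\in\pp}r_2^\pth(\slam)$: any $w$ with $|w|<r_2^+(\slam)$ satisfies $|w|<r_2^\pth(\slam)$ for some path $\pth$ by the definition of the supremum. Thus it is enough to fix a path $\pth\in\pp$ and a scalar $w$ with $|w|<r_2^\pth(\slam)$ and construct $0\neq h\in\ell^2(\betab)$ with $\slam^* h=w\,h$, since $\slam\in\bsb(\ell^2(\betab))$ makes $\slam^*$ bounded and everywhere defined.

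Write $\pth$ as the chain $\koo=u_0,u_1,u_2,\dots$, where $u_{k+1}$ is the unique child of $u_k$ lying on $\pth$ (leaflessness makes the chain infinite and guarantees $\pp\neq\emptyset$). I would seek $h$ supported on $\{u_k\colon k\Ge 0\}$. Expanding $h=\sum_{k\Ge 0}h(u_k)\,e_{u_k}$ and applying the adjoint formula of Lemma \ref{adjoint+invar} termwise — legitimate because $\slam^*$ is bounded — gives $\slam^* h=\sum_{k\Ge 0}h(u_{k+1})\,\lambda_{u_{k+1}}\tfrac{\beta_{u_{k+1}}}{\beta_{u_k}}\,e_{u_k}$. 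In particular $\slam^* h$ is again supported on $\pth$, so the eigenvalue equation at every vertex off $\pth$ holds trivially, and the whole problem collapses to the scalar recurrence $h(u_{k+1})\,\lambda_{u_{k+1}}\tfrac{\beta_{u_{k+1}}}{\beta_{u_k}}=w\,h(u_k)$. Normalizing $h(\koo)=1$ and telescoping yields the closed form $h(u_k)=w^k\,\beta_\koo\big/\big(\beta_{u_k}\,\lambda_{\koo|u_k}\big)$, where $\lambda_{\koo|u_k}=\prod_{j=1}^{k}\lambda_{u_j}$.

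Finally I would verify $h\in\ell^2(\betab)$. Since $\|h\|_\betab^2=\beta_\koo^2\sum_{k\Ge 0}|w|^{2k}\big/\big(\beta_{u_k}\,\lambda_{\koo|u_k}^2\big)$ and $\big(|w|^{2k}/(\beta_{u_k}\lambda_{\koo|u_k}^2)\big)^{1/k}=|w|^2\big/\big[(\sqrt{\beta_{u_k}}\,\lambda_{\koo|u_k})^{1/k}\big]^2$, the Cauchy--Hadamard root test gives convergence once the $\limsup_k$ of this quantity is $<1$; by the definition of $r_2^\pth(\slam)$ as the $\liminf$ of $(\sqrt{\beta_{u_k}}\,\lambda_{\koo|u_k})^{1/k}$, that $\limsup$ equals $|w|^2/(r_2^\pth(\slam))^2<1$ exactly because $|w|<r_2^\pth(\slam)$. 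Hence $h$ is a nonzero element of $\ell^2(\betab)$ with $\slam^* h=w\,h$, so $w\in\sigma_p(\slam^*)$, which proves the inclusion. I expect the one genuinely structural point — as opposed to the routine root-test estimate and the identification of the $\liminf$ with $r_2^\pth(\slam)$ — to be the reduction of the eigenvalue equation to a single chain: a naive coordinatewise check would force the possibly divergent children-sum $\frac1{\beta_u}\sum_{v\in\dzii(u)}h(v)\lambda_v\beta_v$, and the clean way around this is precisely to apply $\slam^*$ to the basis expansion of $h$, so that only the parent map, and never a sum over children, is ever invoked.
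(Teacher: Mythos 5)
Your proposal is correct, and it shares the paper's structural backbone: both arguments reduce to a single path $\pth\in\pp$ (using $r_2^+(\slam)=\sup_{\pth}r_2^\pth(\slam)$, so that $|w|<r_2^+(\slam)$ forces $|w|<r_2^\pth(\slam)$ for some $\pth$), and both exploit the fact, visible from Lemma \ref{adjoint+invar}, that $\slam^*$ maps $\ell^2(\pth,\betab)$ into itself because only the parent map ever appears. Where you diverge is in how the single-path case is settled. The paper conjugates the restricted operator $\mathcal{Q}_\pth\slam^*|_{\ell^2(\pth,\betab)}$ by the explicit diagonal unitary $(Uf)(u)=\beta_u^{-1/2}f(u)$ into the adjoint of a classical unilateral weighted shift $S_\mub$ with $\mu_k=\sqrt{\beta_{v_{k+1}}/\beta_{v_k}}\,\lambda_{v_{k+1}}$, then quotes \cite[Theorem 8]{Shields} to get $\varDelta_{r_2(\pth)}\subseteq\sigma_p(S_\mub^*)\subseteq\overline{\varDelta_{r_2(\pth)}}$ and checks $r_2(\pth)=r_2^\pth(\slam)$. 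You instead inline the content of Shields's theorem: you solve the eigenvalue recurrence $h(u_{k+1})\lambda_{u_{k+1}}\tfrac{\beta_{u_{k+1}}}{\beta_{u_k}}=w\,h(u_k)$ in closed form, $h(u_k)=w^k\beta_\koo/\big(\beta_{u_k}\lambda_{\koo|u_k}\big)$, and verify square-summability by the root test, where the identification of $\liminf_k\big(\sqrt{\beta_{u_k}}\,\lambda_{\koo|u_k}\big)^{1/k}$ with $r_2^\pth(\slam)$ does exactly the work of the equality $r_2(\pth)=r_2^\pth(\slam)$ in the paper (your vector $h$ is precisely the $U$-image of Shields's eigenvector). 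Each route has its advantages: yours is self-contained, needs no citation, produces the eigenvectors explicitly, and works verbatim under the stated hypotheses (no normalization $\sum_{v\in\dzi{u}}\lambda_v=1$ is needed, consistent with the theorem); the paper's route, at the cost of invoking Shields, also delivers the upper inclusion $\sigma_p(S_\mub^*)\subseteq\overline{\varDelta_{r_2(\pth)}}$, which your construction does not address but which the paper reuses immediately afterwards (Corollary \ref{pathbpesig} and the proposition on trees with finitely many branching vertices). The only points worth tightening in your write-up are bookkeeping ones: the passage from $\limsup_k$ of the reciprocal to the reciprocal of the $\liminf_k$ presupposes $r_2^\pth(\slam)>0$ (automatic here, since $|w|<r_2^\pth(\slam)$, and $r_2^\pth(\slam)\leqslant\|\slam\|<\infty$ by Lemma \ref{norma_tk}), and the termwise application of $\slam^*$ should logically be performed after the root-test verification that your closed-form $h$ lies in $\ell^2(\betab)$ — both of which your argument accommodates with trivial reordering.
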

\begin{proof}
Fix $\pth\in\pp$. Then for every $k\in\N_0$ there exists unique $v_k\in\pth$ such that $|v_k|=k$. Define the sequence $\mub=\{\mu_k\}_{k=0}^\infty\subseteq(0,\infty)$ by $\mu_k=\sqrt{\frac{\beta_{v_{k+1}}}{\beta_{v_k}}}\,\lambda_{v_{k+1}}$. Let $S_{\mub}$ be the classical weighted shift on $\ell^2(\N_0)$ with weights $\mub$. Then one can easily show using Lemma \ref{adjoint+invar} that $S_\mub^*$ is unitarily equivalent to $\mathcal{Q}_\pth\slam^*|_{\ell^2(\pth,\betab)}$ via $U\colon \ell^2(\N_0)\to\ell^2(\pth,\betab)$ given by $(Uf)(u)=\beta_u^{-1/2} f(u)$ for $u\in V$, $f\in\ell^2(\N_0)$. According to \cite[Theorem 8]{Shields}, we have
\begin{align} \label{inclwidmo}
\varDelta_{r_2(\pth)}\, \subseteq\, \sigma_p(S_{\mub}^*)\,\subseteq \, \overline{\varDelta_{r_2(\pth)}} ,
\end{align}
with $r_2(\pth)=\liminf_{k\to\infty} \big(\mu_{0}\cdots\mu_{k}\big)^\frac{1}{k+1}$. Clearly, $r_2^\pth(\slam)=r_2(\pth)$. This, \eqref{inclwidmo} and Lemma \ref{adjoint+invar} imply that
\begin{align*}
\varDelta_{r_2^+(\slam)}\subseteq \bigcup_{\pth\in\pp} \sigma_p\big(\mathcal{Q}_{\pth}\slam^*|_{\ell^2(\pth,\betab)}\big)\subseteq \sigma_p(\slam^*),
\end{align*}
which completes the proof.
\end{proof}
Using the unitary equivalence of operators  $\mathcal{Q}_{\pth}\slam^* |_{\ell^2(\pth,\betab)}$ and $S^*_{\mub}$ and applying \cite[Proposition 7 and Theorem 10\,(i)]{Shields} we get
\begin{cor}\label{pathbpesig}
Assume \eqref{stand2}. Let $\slam\in\bsb(\ell^2(\betab))$ be a weighted shift on $\tcal_\betab$. Then $\varDelta_{r_2^\pth(\slam)}=\sigma_p\big(\mathcal{Q}_{\pth}\slam^*|_{\ell^2(\pth,\betab)}\big)$ for all $\pth$.
\end{cor}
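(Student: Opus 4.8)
The plan is to reduce the computation of the point spectrum to the classical weighted shift attached to the path and then quote the corresponding facts from \cite{Shields}. First I would fix $\pth\in\pp$ and recall the ingredients already assembled in the proof of Theorem \ref{r2+widmo}: the enumeration $v_k\in\pth$ with $|v_k|=k$, the weights $\mu_k=\sqrt{\beta_{v_{k+1}}/\beta_{v_k}}\,\lambda_{v_{k+1}}$, and the unitary $U\colon\ell^2(\N_0)\to\ell^2(\pth,\betab)$, $(Uf)(u)=\beta_u^{-1/2}f(u)$, which implements the unitary equivalence of $S_{\mub}^{*}$ and $\mathcal{Q}_\pth\slam^{*}|_{\ell^2(\pth,\betab)}$. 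Since the point spectrum is a unitary invariant, this gives at once $\sigma_p\big(\mathcal{Q}_\pth\slam^{*}|_{\ell^2(\pth,\betab)}\big)=\sigma_p\big(S_{\mub}^{*}\big)$, so the whole statement is reduced to identifying the point spectrum of the classical backward shift $S_{\mub}^{*}$.

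Next I would compute $\sigma_p\big(S_{\mub}^{*}\big)$ via the standard eigenvector analysis provided by \cite{Shields}. Solving $S_{\mub}^{*}x=\theta x$ termwise produces the reproducing-kernel eigenvector $x_k=\theta^{k}/(\mu_0\cdots\mu_{k-1})$, so that $\theta\in\sigma_p(S_{\mub}^{*})$ precisely when $\sum_{k}|\theta|^{2k}/(\mu_0\cdots\mu_{k-1})^{2}<\infty$; this is the content of \cite[Proposition 7]{Shields}. By \cite[Theorem 10(i)]{Shields} this convergence set is exactly the \emph{open} disc $\varDelta_{r_2(\pth)}$, with $r_2(\pth)=\liminf_{k\to\infty}(\mu_0\cdots\mu_k)^{1/(k+1)}$. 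Finally, the telescoping identity $\mu_0\cdots\mu_{k-1}=\sqrt{\beta_{v_k}/\beta_{\koo}}\,\lambda_{\koo|v_k}^{\pth}$, already noted in the proof of Theorem \ref{r2+widmo}, gives $r_2(\pth)=r_2^\pth(\slam)$, whence $\sigma_p\big(\mathcal{Q}_\pth\slam^{*}|_{\ell^2(\pth,\betab)}\big)=\varDelta_{r_2^\pth(\slam)}$, which is the asserted equality for an arbitrary $\pth$.

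The step that carries the real weight is the sharp localisation of the boundary circle $|\theta|=r_2(\pth)$. The inclusion already used in Theorem \ref{r2+widmo}, namely \cite[Theorem 8]{Shields}, only sandwiches the point spectrum between $\varDelta_{r_2(\pth)}$ and $\overline{\varDelta_{r_2(\pth)}}$ and says nothing about the circle itself; upgrading this to exact equality with the open disc is precisely the additional information contributed by \cite[Proposition 7 and Theorem 10(i)]{Shields}. Accordingly the only care needed is bookkeeping: checking that the normalisation of $\mub$ matches Shields' convention (the product of the first $k$ weights versus the factor $\sqrt{\beta_{v_k}}\,\lambda_{\koo|v_k}^{\pth}$) and that the $\liminf$ radius $r_2(\pth)$ is genuinely identified with the tree quantity $r_2^\pth(\slam)$ through the telescoping product above. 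No estimates beyond those already present in the proof of Theorem \ref{r2+widmo} are required.
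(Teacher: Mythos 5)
Your proposal is correct and follows essentially the same route as the paper: the paper's proof is precisely the one-line argument ``use the unitary equivalence of $\mathcal{Q}_{\pth}\slam^*|_{\ell^2(\pth,\betab)}$ and $S_{\mub}^*$ established in the proof of Theorem \ref{r2+widmo}, then apply \cite[Proposition 7 and Theorem 10\,(i)]{Shields}.'' The extra details you supply --- the termwise eigenvector computation for $S_{\mub}^*$, and the telescoping identity $\mu_0\cdots\mu_{k-1}=\sqrt{\beta_{v_k}/\beta_{\koo}}\,\lambda_{\koo|v_k}^{\pth}$ identifying $r_2(\pth)$ with $r_2^\pth(\slam)$ --- are exactly the bookkeeping the paper leaves implicit.
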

In view of \cite[Theorem 8]{Shields}, it is reasonable to ask for a solution of the following.
\begin{prob}\label{probr2}
When does the inclusion
\begin{align*}
\sigma_p\big(\slam^*\big)\subseteq  \overline{\varDelta_{r_2^+(\slam)}}
\end{align*}
hold?
\end{prob}
As shown below the inclusion of Problem \ref{probr2} is satisfied if the tree has finitely many branching vertexes. Recall that a vertex $v$ is called a {\em branching vertex} if the set $\dzi{v}$ contains at least two distinct vertices.
\begin{prop}
Assume \eqref{stand3}. If the directed tree $\tcal$ has finitely many branching points, then
$\sigma_p\big(\slam^*\big) \subseteq \overline{\Delta_{r_2^+(\slam)}}$.
\end{prop}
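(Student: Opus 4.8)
The plan is to show directly that every $\theta\in\sigma_p(\slam^*)$ satisfies $|\theta|\Le r_2^+(\slam)$, which is precisely the asserted inclusion. Fix such a $\theta$ with an eigenvector $k\in\ell^2(\betab)$, $k\neq 0$. The eigenvalue $\theta=0$ is harmless (the closed disc contains the origin, and $e_{\koo}$ shows $0$ is always an eigenvalue), so I would assume $\theta\neq 0$. Reading off the coefficient of $e_w$ in $\slam^* k=\theta k$ by means of Lemma \ref{adjoint+invar} and linearity, the eigenequation becomes, for every $w\in V$,
\begin{align*}
\theta\, k(w) = \sum_{u\in\dzi{w}} \lambda_u\,\tfrac{\beta_u}{\beta_w}\, k(u).
\end{align*}

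The key use of the hypothesis is a reduction to a single ray. First I would locate a vertex with $k\neq 0$ and propagate downward: since $\theta\neq 0$, the displayed identity forces, whenever $k(w)\neq 0$, some child $u\in\dzi{w}$ with $k(u)\neq 0$ (all $\lambda_u,\beta_u$ being positive). Iterating produces an infinite descending branch along which $k$ never vanishes. Because $\tcal$ has only finitely many branching vertices, there is a level $M$ beyond which no vertex branches; hence once this branch passes level $M$ it is forced to continue along a unique ray $\{v_0,v_1,v_2,\ldots\}$, with $v_{i+1}$ the unique child of $v_i$, $|v_0|>M$, and $k(v_0)\neq 0$. Extending upward to $\koo$ yields a genuine path $\pth\in\pp$ through this ray.

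On the ray every $v_i$ is non-branching, so the eigenequation collapses to the scalar recursion $\theta\, k(v_i)=\lambda_{v_{i+1}}\tfrac{\beta_{v_{i+1}}}{\beta_{v_i}}k(v_{i+1})$, which solves to $k(v_i)=\theta^i\,\tfrac{\beta_{v_0}}{\beta_{v_i}}\,\lambda_{v_0|v_i}^{-1}\,k(v_0)$. Since $k\in\ell^2(\betab)$ and $k(v_0)\neq 0$, summability of $\sum_i|k(v_i)|^2\beta_{v_i}$ over the ray forces $\sum_i |\theta|^{2i}\,\lambda_{v_0|v_i}^{-2}\,\beta_{v_i}^{-1}<\infty$. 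I would then apply the root test to this series, using the multiplicativity $\lambda_{\koo|v_i}=\lambda_{\koo|v_0}\,\lambda_{v_0|v_i}$ together with the fact that the constant factor $\lambda_{\koo|v_0}$ and the finite index shift $|v_0|$ between the path- and ray-indexings do not affect the limiting rate, to identify $r_2^\pth(\slam)=\liminf_{i\to\infty}\big(\sqrt{\beta_{v_i}}\,\lambda_{v_0|v_i}\big)^{1/i}$. Convergence of the series then gives $|\theta|\Le r_2^\pth(\slam)\Le r_2^+(\slam)$, as required.

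The main obstacle is the second step: without the finite-branching hypothesis the descending non-vanishing branch need not settle onto a single ray, and the clean scalar recursion — hence the comparison with $r_2^\pth(\slam)$ and the classical one-dimensional computation behind Corollary \ref{pathbpesig} — would be unavailable. The remaining points are routine: checking the multiplicativity of the $\lambda_{u|v}$ so that $r_2^\pth(\slam)$ genuinely matches the ray rate, and observing that the boundary case $|\theta|=r_2^\pth(\slam)$, where the root test is inconclusive, is immaterial since only the inequality $|\theta|\Le r_2^+(\slam)$ is needed.
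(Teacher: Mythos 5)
Your proof is correct, and its skeleton matches the paper's: both arguments use the finite-branching hypothesis to force a nonvanishing piece of the eigenvector onto the tail of a single path, solve the resulting scalar recursion there, and read the bound $|\theta|\Le r_2^{\pth}(\slam)\Le r_2^+(\slam)$ off the $\ell^2(\betab)$-summability of the eigenvector. The two arguments differ at both ends, and in each case yours is the more self-contained. At the front, your downward propagation (if $\theta\neq 0$ and $k(w)\neq 0$, then some child of $w$ carries a nonzero value) is exactly what is needed to justify the paper's terse assertion that some path $\tilde\pth$ satisfies $f(v_{\tilde\pth})\neq 0$ at the level $n_0$ beyond which no branching occurs; the paper leaves this step implicit (it is where one sees that an eigenvector vanishing at all levels $\Ge n_0$ can only belong to the eigenvalue $0$). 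At the back, the paper exploits the normalization in \eqref{stand3}: beyond the last branching level each vertex has a unique child, whose weight must then equal $1$, so the eigenvector tail takes the form $\alpha_{\tilde\pth}\,w^{|v|}/\beta_v$, square-summability says precisely that $w\in\bpe{\tilde\pth_\betab}$, and the bound $|w|\Le r_2^{\tilde\pth}(\slam)$ is imported from Shields' theorem through Proposition \ref{unionbpe} and \eqref{inclwidmo}. You instead keep general weights, carry the products $\lambda_{v_0|v_i}$ through the recursion, and finish with a bare root test. Your route buys two things: it bypasses the bounded-point-evaluation machinery and the appeal to \cite{Shields}, and it never uses $\sum_{v\in\dzi{u}}\lambda_v=1$, so it proves the inclusion under \eqref{stand2} plus boundedness alone (a generality the paper could also recover via the unitary equivalence of Proposition \ref{thm-unit+-}, but which its stated proof does not give directly). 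The paper's route buys brevity given the tools already on the table, and the slightly finer localization that every nonzero eigenvalue lies in some $\bpe{\pth_\betab}$. The routine points you defer --- that the constant factor $\lambda_{\koo|v_0}$ and the index shift $|v_0|$ do not alter the $\liminf$, and that $\big(\sqrt{\beta_{v_i}}\,\lambda_{v_0|v_i}\big)^{1/i}$ stays bounded, which follows from Lemma \ref{norma_tk} and $\|\slam\|<\infty$ --- do go through, so there is no gap.
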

\begin{proof}
Since $\tcal$ has finite number of branching vertices, there is $n_0\in\N$ such that $\dzi{\paa(v)}=\{v\}$ for every $v\in V$ with $|v|\Ge n_0$. Let $w\in \sigma_p(\slam^*)$ and let $f\in\ell^2(\betab)$ be the corresponding eigenvector, i.e.,
\begin{align} \label{ker}
wf(u)=  \sum_{v\in\dzi{u}} \lambda_v\;\frac{\beta_v}{\beta_u}\;f(v),\quad u\in V.
\end{align}
We claim that for every $\pth\in\pp$ there is $\alpha_\pth\in\C$ such that
\begin{align}\label{ogon}
f(v)=\alpha_\pth \frac{w^{|v|}}{\beta_v},\quad v\in \{ u\in \pth\colon |u|\Ge n_0\}.
\end{align}
Indeed, fix $\pth\in\pp$. Let $v_\pth$ be  the only vertex in $\pth$ such that $|v_\pth|=n_0$ and  let $v_k$, $k\in\N$, be the unique vertex in $\pth$ such that $|v_k|=n_0+k$. In view of \eqref{stand3}, $\lambda_{v_k}=1$ for every $k\in \N$. Then, applying \eqref{ker} repeatedly $k$-times, we deduce that
\begin{align*}
f(v_k)= w^k \frac{\beta_{v_\pth}}{\beta_{v_k}}\;  f(v_\pth) ,\quad k\in\N,
\end{align*}
which clearly gives \eqref{ogon} with $\alpha_\pth=w^{-|v_\pth|}\beta_{v_\pth}f(v_\pth)$.

Since $f$ is non-zero, there is $\tilde\pth\in\pp$ such that $f(v_{\tilde\pth})\neq 0$. In view of \eqref{ogon}, the inequalities
\begin{align*}
\alpha_{\tilde\pth}^2\sum_{k=1}^\infty\bigg(\frac{w^{k}}{\beta_{v_k}}\bigg)^2\beta_{v_k}\Le \sum_{v\in V} |f(v)|^2\beta_v<\infty,
\end{align*}
Theorem \ref{altbpe0} and Proposition \ref{unionbpe} we have $w\in\bpe{\pth_\betab}\subseteq\sigma_{p}\big(\mathcal{Q}_\pth\slam^*|_{\ell^2(\tilde\pth,\betab)}\big)=\sigma_p(S^*_{\mub})$, where $S_{\mub}$ is as in the proof of Theorem \ref{r2+widmo}. Hence, by \eqref{inclwidmo} we have $|w|\Le r_2^{\tilde\pth}(\slam)\Le r_2^+(\slam)$. This proves the claim.
\end{proof}
On the other hand, if there is plenty of branching vertexes in each generation, then the inclusion of Problem \ref{probr2} does not hold. This follows from the Proposition \ref{k-ary} below. Recall that for $\kappa\in\N$, the rooted {\em $\kappa$-ary directed tree} is the directed tree $\tcal^{(\kappa)}=\big(V^{(\kappa)}, E^{(\kappa)}\big)$ given by
\begin{align*}
V^{(\kappa)} &= \{ (k,l)\colon k \in \N_0 \text{ and } l \in J_{\kappa^k}\},\\
E^{(\kappa)} &= \Big\{ \big( (k,l), (k+1,m) \big) \colon k \in \N_0, l \in J_{\kappa^k}, m=\kappa( l-1)+1,\ldots, \kappa\, l \Big\}.
\end{align*}
Figure \ref{Fig1} above gives a more intuitive description of $\kappa$-ary directed trees.
\hspace*{0em}
{
\begin{figure}
\begin{tikzpicture}[scale = .6, transform shape, line cap=round,line join=round,x=1.2cm,y=1cm]
   \tikzstyle{every node} = [circle, fill=gray!30]

   \node (v01)[font=\footnotesize, inner sep = 1pt]at (0,0){(0,1)};

   \node (v11)[font=\footnotesize, inner sep = 1pt] at (5,2){(1,1)};
   \node (v12)[font=\footnotesize, inner sep = 1pt] at (5,0){(1,2)};
   \node (v13)[font=\scriptsize, inner sep = 1pt] at (5,-2){(1,3)};

   \node (v21)[font=\footnotesize, inner sep = 1pt] at (10,4.25){(2,1)};
   \node (v22)[font=\footnotesize, inner sep = 1pt] at (10,3.25){(2,2)};
   \node (v23)[font=\footnotesize, inner sep = 1pt] at (10,2.25){(2,3)};

   \node (v24)[font=\footnotesize, inner sep = 1pt] at (10,1){(2,4)};
   \node (v25)[font=\footnotesize, inner sep = 1pt] at (10,0){(2,5)};
   \node (v26)[font=\footnotesize, inner sep = 1pt] at (10,-1){(2,6)};

   \node (v27)[font=\footnotesize, inner sep = 1pt] at (10,-2.25){(2,7)};
   \node (v28)[font=\footnotesize, inner sep = 1pt] at (10,-3.25){(2,8)};
   \node (v29)[font=\footnotesize, inner sep = 1pt] at (10,-4.25){(2,9)};

   \node (v313)[font=\footnotesize, inner sep = 0.25pt] at (15,1){(3,13)};
   \node (v314)[font=\footnotesize, inner sep = 0.25pt] at (15,0){(3,14)};
   \node (v315)[font=\footnotesize, inner sep = 0.25pt] at (15, -1){(3,15)};

   \draw[->] (v01) --(v11) node[pos=0.6,below = -5pt,fill=none, font = \footnotesize] {$1/3$};
   \draw[->] (v01) --(v12) node[pos=0.6,below = -4pt,fill=none, font = \footnotesize] {$1/3$};
   \draw[->] (v01) --(v13) node[pos=0.6,below =  -3pt,fill=none, font = \footnotesize] {$1/3$};

   \draw[->] (v11) --(v21) node[pos=0.6,below = -5pt,fill=none, font = \footnotesize] {$1/3$};
   \draw[->] (v11) --(v22) node[pos=0.6,below = -4pt,fill=none, font = \footnotesize] {$1/3$};
   \draw[->] (v11) --(v23) node[pos=0.6,below =  -3pt,fill=none, font = \footnotesize] {$1/3$};

   \draw[->] (v12) --(v24) node[pos=0.6,below = -5pt,fill=none, font = \footnotesize] {$1/3$};
   \draw[->] (v12) --(v25) node[pos=0.6,below = -4pt,fill=none, font = \footnotesize] {$1/3$};
   \draw[->] (v12) --(v26) node[pos=0.6,below = -3pt,fill=none, font = \footnotesize] {$1/3$};

   \draw[->] (v13) --(v27) node[pos=0.6,below = -5pt,fill=none, font = \footnotesize] {$1/3$};
   \draw[->] (v13) --(v28) node[pos=0.6,below = -4pt,fill=none, font = \footnotesize] {$1/3$};
   \draw[->] (v13) --(v29) node[pos=0.6,below = -3pt,fill=none, font = \footnotesize] {$1/3$};

   \draw[->] (v25) --(v313) node[pos=0.6,below = -5pt,fill=none, font = \footnotesize] {$1/3$};
   \draw[->] (v25) --(v314) node[pos=0.6,below = -4pt,fill=none, font = \footnotesize] {$1/3$};
   \draw[->] (v25) --(v315) node[pos=0.6,below = -3pt,fill=none, font = \footnotesize] {$1/3$};
\end{tikzpicture}
\caption{3-ary directed tree}
\label{Fig1}
\end{figure}
}
\begin{prop}\label{k-ary}
Let $\kappa\in\N$. Let $\slam$ be the weighted shift on $\tcal^{(\kappa)}_\betab$ with $\lambda_v=\kappa^{-1}$ for $v\in V^{(\kappa)}\setminus\{(0,1)\}$ and $\beta_v=\kappa^{-|v|}$ for all $v\in V^{(\kappa)}$. Then  $\slam\in\bsb(\ell^2(\betab))$, $r_2^+(\slam)=\frac{1}{\sqrt{\kappa^3}}$ and $\{z\in\C\colon |z|<\frac1\kappa\}\subseteq \sigma_p(\slam^*)$.
\end{prop}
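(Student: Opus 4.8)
The plan is to verify the three assertions in turn; each reduces to a geometric-series computation made transparent by the homogeneity of $\tcal^{(\kappa)}$. Two combinatorial facts drive everything: every vertex has exactly $\kappa$ children, so the $k$-th generation $\{v\colon |v|=k\}$ has cardinality $\kappa^k$; and since every non-root vertex carries weight $\kappa^{-1}$, the telescoping product of weights from the root down to a vertex $v$ is $\lambda_{\koo|v}=(\kappa^{-1})^{|v|}=\kappa^{-|v|}$.

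First, to establish $\slam\in\bsb(\ell^2(\betab))$ I would apply Lemma \ref{norma_tk}. For each $u$ the $\kappa$ children $v\in\dzi{u}$ satisfy $\lambda_v^2=\kappa^{-2}$ and $\beta_v/\beta_u=\kappa^{-|v|}/\kappa^{-|u|}=\kappa^{-1}$, whence $\sum_{v\in\dzi{u}}\lambda_v^2\,\beta_v/\beta_u=\kappa\cdot\kappa^{-2}\cdot\kappa^{-1}=\kappa^{-2}$, independently of $u$. Thus the supremum is finite (and in fact $\|\slam\|=\kappa^{-1}$), giving boundedness. Along the way I would record that $\sum_{v\in\dzi{u}}\lambda_v=\kappa\cdot\kappa^{-1}=1$, so assumption \eqref{stand3} holds and Proposition \ref{bpespec} is available for the third part.

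Next, for $r_2^+(\slam)$ the homogeneity makes all paths identical. For any $v$ in a path $\pth$ with $|v|=k\ge 1$ we have $\sqrt{\beta_v}\,\lambda_{\koo|v}^\pth=\kappa^{-k/2}\cdot\kappa^{-k}=\kappa^{-3k/2}$, so $(\sqrt{\beta_v}\,\lambda_{\koo|v}^\pth)^{1/|v|}=\kappa^{-3/2}$ for every such $v$. Since this value is constant, both the infimum over $|v|\ge n$ and the limit equal $\kappa^{-3/2}$, and the supremum over $\pth\in\pp$ gives $r_2^+(\slam)=\kappa^{-3/2}=1/\sqrt{\kappa^3}$. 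Finally, for the point spectrum I would show that the open disc of radius $1/\kappa$ consists of bounded point evaluations. By Theorem \ref{altbpe0}(iv), $w\in\bpe{\tcal_\betab}$ iff $\sum_{v\in V}\beta_v^{-1}|w|^{2|v|}<\infty$; grouping by generation and using $|\{v\colon|v|=k\}|=\kappa^k$ and $\beta_v^{-1}=\kappa^{k}$, this equals $\sum_{k=0}^\infty\kappa^{k}\kappa^{k}|w|^{2k}=\sum_{k=0}^\infty(\kappa^2|w|^2)^k$, which converges precisely when $|w|<1/\kappa$. Hence $\bpe{\tcal_\betab}=\{z\colon|z|<1/\kappa\}$, and Proposition \ref{bpespec} yields $\{z\colon|z|<1/\kappa\}\subseteq\sigma_p(\slam^*)$.

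No step is a genuine obstacle here, since the homogeneity collapses every sum to a geometric series; the only care needed is the bookkeeping of the generation sizes $\kappa^k$ and of the product $\lambda_{\koo|v}=\kappa^{-|v|}$, which underlie all three computations. The substance of the result is the inequality $\kappa^{-3/2}<\kappa^{-1}$ for $\kappa\ge 2$: it forces $\sigma_p(\slam^*)$ to reach strictly beyond $\overline{\varDelta_{r_2^+(\slam)}}$, thereby refuting the inclusion of Problem \ref{probr2} in general.
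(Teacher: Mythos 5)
Your proposal is correct, and all three computations check out: the supremum in Lemma \ref{norma_tk} equals $\kappa^{-2}$ (so $\|\slam\|=\kappa^{-1}$), the quantity $\big(\sqrt{\beta_v}\,\lambda_{\koo|v}^{\pth}\big)^{1/|v|}$ is identically $\kappa^{-3/2}$ along every path, and the series $\sum_{v}\beta_v^{-1}|w|^{2|v|}=\sum_{k}(\kappa^2|w|^2)^k$ converges exactly when $|w|<1/\kappa$. Where you diverge from the paper is the third assertion: the paper does not pass through bounded point evaluations at all, but directly defines $f_w(v)=w^{|v|}$ for $|w|<1$, checks $\|f_w\|_{\betab}^2=\sum_{n=0}^{\infty}|w|^{2n}<\infty$, and verifies via Lemma \ref{adjoint+invar} that $\slam^* f_w=\tfrac{w}{\kappa}f_w$, so that the eigenvalues $w/\kappa$ sweep out $\varDelta_{1/\kappa}$. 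Your route instead identifies $\bpe{\tcal^{(\kappa)}_{\betab}}=\varDelta_{1/\kappa}$ exactly and invokes Proposition \ref{bpespec} (after verifying \eqref{stand3}, which you correctly do). The two arguments are mathematically the same computation in disguise --- the kernel $k_w(v)=\overline{w}^{|v|}/\beta_v=(\kappa\overline{w})^{|v|}$ appearing in the proof of Proposition \ref{bpespec} is precisely the paper's $f_{\kappa\overline w}$ --- but each packaging has its merits: the paper's is self-contained, exhibits explicit eigenvectors, and does not need the normalization $\sum_{v\in\dzi{u}}\lambda_v=1$; yours leverages the general machinery already established, gives slightly more information (the bpe set is determined exactly, not just bounded below), and makes transparent why the example refutes the inclusion of Problem \ref{probr2}, namely $\kappa^{-3/2}<\kappa^{-1}$ for $\kappa\geq 2$.
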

\begin{proof} Direct calculation shows that $\slam\in\bsb(\ell^2(\betab))$ and $r_2^+(\slam)=\frac{1}{\sqrt{\kappa^3}}$. Now, for $w\in\C$, let us define the function $f_w\colon V\to \C$ by $f_w(v)= w^{|v|}$  for $v\in V^{(\kappa)}$.
Clearly,
\begin{align*}
\|f_w\|_\mathbb{\betab}^2 = \sum_{v\in V^{(\kappa)}}  \frac{|w|^{2|v|}}{\kappa^{|v|}} =\sum_{n=0}^{\infty} {|w|^{2n}}<\infty\quad  \text{ for }\quad |w|<{1},
\end{align*}
and thus (see Lemma \ref{adjoint+invar})
\begin{align*}
(\slam^*f_w)(u) = \sum_{v \in \dzi{u}} \frac {1}{\kappa^2} f_w(v) = \frac{w^{|u|+1}}{\kappa} = \frac{w}{\kappa} f_w(u) \quad  \text{ for }u\in V^{(\kappa)} \text{ and } |w|<{1}.
\end{align*}
This proves the required inclusion.
\end{proof}
\section{Non-weighted directed trees}\label{Last}
In this section we concentrate on weighted shifts on directed trees $\tcal_{\mathbb{1}}$ and comment on how the results of the previous sections apply in this context.

We begin by showing that a weighted shift $S_\mub$ on $\tcal_{\mathbb{1}}$ is unitarily equivalent to a weighted shift $\slam$ on $\tcal_\betab$ with any given weights $\lambdab$. The proof is essentially extracted from \cite[Example 4.3.1]{2012-j-j-s-jfa}.
\begin{prop} \label{thm-unit+-}
Let $\tcal=(V,E)$ be a countably infinite rooted and leafless directed tree and let $\mub=\{\mu_v\}_{v \in V^{\circ}}$ be a family of nonzero complex numbers. Let $\lambdab=\{\lambda_v\}_{v \in V^{\circ}}\subseteq (0,\infty)$. Define the family $\betab=\{\beta_v\}_{v\in V}\subseteq (0,\infty)$ by
\begin{align*}
\beta_v=\bigg|\frac{\mu_{\koo|v}}{\lambda_{\koo|v}}\bigg|^2,\quad v\in V.
\end{align*}
Then the operator $U \colon \ell^2(V, \mathbb{1}) \to \ell^2(V, \betab)$, defined by
\begin{align*}
\big(U f\big)(u) =
   \frac{\lambda_{\koo|u}}{\mu_{\koo|u}}\, f(u),\quad u\in V,\ f\in \ell^2(\mathbb{1}).
\end{align*}
is unitary and the weighted shift $S_{\mub}$ on $\tcal_{\mathbb{1}}$ is unitarily equivalent to the weighted shift $\slam$  on $\tcal_{\betab}$ via $U$, i.e.,
\begin{align*}
U S_{\mub} = \slam U.
\end{align*}
\end{prop}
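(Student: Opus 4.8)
The plan is to regard $U$ as the diagonal multiplication map $(Uf)(u)=\tfrac{\lambda_{\koo|u}}{\mu_{\koo|u}}f(u)$ acting formally on all of $\C^V$, and to establish three things in turn: that $U$ restricts to a unitary from $\ell^2(\mathbb{1})$ onto $\ell^2(\betab)$, that this map intertwines the two formal shift maps $\varLambda_\tcal^\mub$ and $\varLambda_\tcal^\lambdab$, and that these two facts together force the operator identity $US_{\mub}=\slam U$, domains included.

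First I would record that $\beta_v\in(0,\infty)$ for every $v$, since $\mu_v\neq0$ and $\lambda_v>0$ yield $\mu_{\koo|v}\neq0$ and $\lambda_{\koo|v}>0$. The isometry of $U$ is then a one-line computation valid for every $f\in\C^V$,
\[
\|Uf\|_\betab^2=\sum_{u\in V}\Big|\tfrac{\lambda_{\koo|u}}{\mu_{\koo|u}}\Big|^2|f(u)|^2\beta_u=\sum_{u\in V}|f(u)|^2=\|f\|_{\mathbb{1}}^2,
\]
the middle equality being exactly the cancellation built into $\beta_u=|\mu_{\koo|u}/\lambda_{\koo|u}|^2$. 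Read for all $f\in\C^V$ (with values in $[0,\infty]$), this identity gives simultaneously that $U$ embeds $\ell^2(\mathbb{1})$ isometrically into $\ell^2(\betab)$ and the equivalence $g\in\ell^2(\mathbb{1})\Leftrightarrow Ug\in\ell^2(\betab)$ that I will need for the domains. Surjectivity is witnessed by the formal inverse $(U^{-1}g)(u)=\tfrac{\mu_{\koo|u}}{\lambda_{\koo|u}}g(u)$; a surjective linear isometry between Hilbert spaces is unitary, so $U$ is unitary.

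The algebraic heart of the argument is the one-step multiplicativity of the path products: for $v\in V^\circ$ one has $\lambda_{\koo|v}=\lambda_v\,\lambda_{\koo|\pa v}$ and $\mu_{\koo|v}=\mu_v\,\mu_{\koo|\pa v}$, because $\paa^j(v)=\paa^{j-1}(\pa v)$ for $j\geqslant 1$. With this in hand I would verify the formal intertwining $U\varLambda_\tcal^\mub=\varLambda_\tcal^\lambdab U$ on $\C^V$ pointwise: both sides vanish at $\koo$, while for $v\in V^\circ$,
\[
(U\varLambda_\tcal^\mub f)(v)=\tfrac{\lambda_{\koo|v}}{\mu_{\koo|v}}\,\mu_v f(\pa v)=\lambda_v\,\tfrac{\lambda_{\koo|\pa v}}{\mu_{\koo|\pa v}}\,f(\pa v)=\lambda_v(Uf)(\pa v)=(\varLambda_\tcal^\lambdab Uf)(v),
\]
where the second equality is precisely the cancellation supplied by the two multiplicativity relations.

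Finally I would assemble these into the operator statement. Since $U$ is bounded, $\dz{US_{\mub}}=\dz{S_{\mub}}$ and $\dz{\slam U}=U^{-1}(\dz{\slam})$. Applying the equivalence $g\in\ell^2(\mathbb{1})\Leftrightarrow Ug\in\ell^2(\betab)$ to both $f$ and $\varLambda_\tcal^\mub f$, and using $U\varLambda_\tcal^\mub f=\varLambda_\tcal^\lambdab Uf$, shows that $f\in\dz{S_{\mub}}$ iff $Uf\in\dz{\slam}$; and on this common domain $US_{\mub}f=U\varLambda_\tcal^\mub f=\varLambda_\tcal^\lambdab Uf=\slam Uf$, which is the desired $US_{\mub}=\slam U$. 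The only genuinely delicate point is this domain bookkeeping, since $S_{\mub}$ and $\slam$ need not be bounded; phrasing the norm identity for all of $\C^V$ (allowing $+\infty$) is what makes the domains match automatically, so no appeal to a core such as $\ee$ is necessary. I expect that matching to be the main, albeit modest, obstacle, the remainder being the two displayed computations.
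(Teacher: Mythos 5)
Your proof is correct and follows essentially the same route as the paper's: the same isometry computation exploiting the definition of $\betab$, the same pointwise intertwining calculation based on $\lambda_{\koo|v}=\lambda_v\,\lambda_{\koo|\paa(v)}$ and $\mu_{\koo|v}=\mu_v\,\mu_{\koo|\paa(v)}$, and the same domain matching. The only cosmetic difference is that you merge the two domain inclusions, which the paper proves separately ($US_{\mub}\subseteq \slam U$ and then the converse), into a single equivalence by reading the isometry identity on all of $\C^V$ with values in $[0,\infty]$.
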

\begin{proof}
Since, for every $f\in \ell^2(\mathbb{1})$ we have
\begin{align*}
\sum_{u \in V} |(U f)(u)|^2 \beta_u = \sum_{u \in V} \bigg|\frac{\lambda_{\koo|u}}{\mu_{\koo|u}}\bigg|^2 |f(u)|^2 \beta_u = \sum_{u \in V} |f(u)|^2,
\end{align*}
we see that $U$ is well-defined unitary isomorphism.

Let $f\in\ell^2(\mathbb{1})$ be such that $\varLambda_\tcal^\mub f\in\ell^2(\mathbb{1})$. Thus we get
\begin{multline}
\big(U  \varLambda_\tcal^\mub f\big)(u)
=\frac{\lambda_{\koo|u}}{\mu_{\koo|u}} \, \big(\varLambda_\tcal^\mub f\big)(u)
= \frac{\lambda_{\koo|u}}{\mu_{\koo|u}} \, \mu_u f(\paa(u))\\
=\lambda_u\, \frac{\lambda_{\koo|\paa(u)}}{\mu_{\koo|\paa(u)}} \, f(\paa(u))
= \lambda_u\, (U f)(\paa(u))=\big(\varLambda_{\tcal}^{\lambdab}Uf\big)(u), \quad u\in V\setminus \{\koo\}.\label{unit1+-}
\end{multline}
On the other hand, $\big(U \varLambda_\tcal^{\mub} f\big)(\koo)=0$. This and \eqref{unit1+-} imply that $U f\in \dz{S_{\lambdab}}$. Moreover, $ U S_{\mub} f=\slam U f$. Therefore, we obtain the inclusion $U S_{\mub} \subseteq \slam U$.

Now, suppose that $f\in\ell^2(\mathbb{1})$ satisfy $U f\in\dz{\slam}$. Set $g(u)=\frac{\lambda_{\koo|u}}{\mu_{\koo|u}}\big(\varLambda_\tcal^\mub f\big)(u)$ for $u\in V$. Then, by \eqref{unit1+-}, $g\in\ell^2(V, \betab)$ and consequently $(U^{-1} g)(u)=(\varLambda_\tcal^\mub f)(u)$ for every $u\in V$. This means that $f\in\dz{S_\mub}$. Combining this with the inclusion $U S_{\mub} \subseteq \slam  U$ we complete the proof.
\end{proof}
In Sections \ref{BPEsec} and \ref{PSVPsec} we considered weighted shifts with weights summing over children of every vertex up to $1$ (see condition \eqref{stand3}). In view of the following Proposition we didn't lose much of generality doing so.
\begin{prop}\label{unilat}
Let $\tcal=(V,E)$ be a countably infinite rooted and leafless directed tree and $\mub=\{\mu_v\}_{v \in V^{\circ}}\subseteq \C\setminus\{0\}$. Suppose
\begin{align*}
\mu_{[v]} := \sum_{u\in \dzii(\paa(v))} |\mu_u|^2 <\infty, \quad v \in V^\circ.
\end{align*}
Define the families $[\mub] := \big\{ [\mu]_v\big\}_{v \in V^\circ}$ and $\langle\mub\rangle:= \{ \langle\mu\rangle_v\}_{v \in V}$ by
\begin{align*}
[\mu]_v= \frac{|\mu_v|^2}{\mu_{[v]}}, \quad \langle\mu\rangle_v= \bigg|\frac{\mu_{[\koo |v]}}{\mu_{\koo|v}}\bigg|^2
\quad\text{ with }\quad
\mu_{[u|v]} &=
\left\{ \begin{array}{cl}
1 & \text{if } u=v, \\
\prod_{j=0}^{n-1} \mu_{[\paa^j(v)]} & \text{if }  v \in \dzii^{\langle n\rangle}(u).
\end{array} \right.
\end{align*}
Then the weighted shift $S_{[\mub]}$ on $\tcal_{\langle\mub\rangle}$ is unitarily equivalent to the weighted shift $S_\mub$ on $\tcal_{\mathbb{1}}$. Moreover, for every $u \in V$, $e_u \in \dz{S_{[\mub]}}$ and $S_{[\mub]} e_u   = \sum_{v \in \dzii(u)} \frac{|\mu_v|^2}{\mu_{[v]}} e_v$.
\end{prop}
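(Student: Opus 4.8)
The plan is to realize the asserted unitary equivalence as a special case of Proposition~\ref{thm-unit+-}, taking its target weights to be $[\mub]$, and then to check that the weight family $\betab$ produced there is precisely $\langle\mub\rangle$. First I would record that $[\mub]\subseteq(0,\infty)$: because $v\in\dzii(\paa(v))$ we have $\mu_{[v]}\Ge|\mu_v|^2>0$, and $\mu_{[v]}<\infty$ by hypothesis, so $[\mu]_v=|\mu_v|^2/\mu_{[v]}$ is a well-defined positive number. Hence Proposition~\ref{thm-unit+-} applies with $\lambdab=[\mub]$ and yields a unitary $U\colon\ell^2(V,\mathbb{1})\to\ell^2(V,\betab)$, $(Uf)(u)=\frac{[\mu]_{\koo|u}}{\mu_{\koo|u}}f(u)$, with $\beta_v=\big|\mu_{\koo|v}/[\mu]_{\koo|v}\big|^2$ and $U S_\mub=S_{[\mub]}U$.

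The crux is then the identification $\betab=\langle\mub\rangle$, and this reduces to a single path-product identity. Multiplicativity of the weights $[\mu]_w=|\mu_w|^2/\mu_{[w]}$ along the path from $\koo$ to $v$ gives
\begin{align*}
[\mu]_{\koo|v}=\prod_{j=0}^{|v|-1}\frac{|\mu_{\paa^j(v)}|^2}{\mu_{[\paa^j(v)]}}=\frac{\big|\prod_{j=0}^{|v|-1}\mu_{\paa^j(v)}\big|^2}{\prod_{j=0}^{|v|-1}\mu_{[\paa^j(v)]}}=\frac{|\mu_{\koo|v}|^2}{\mu_{[\koo|v]}},
\end{align*}
the last equality being the definitions of $\mu_{\koo|v}$ and of $\mu_{[\koo|v]}$. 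Substituting this into $\beta_v=\big|\mu_{\koo|v}/[\mu]_{\koo|v}\big|^2$ and using that $\mu_{[\koo|v]}$ is a positive real, I get $\beta_v=\big|\mu_{[\koo|v]}/\mu_{\koo|v}\big|^2=\langle\mu\rangle_v$. Thus $\betab=\langle\mub\rangle$, and Proposition~\ref{thm-unit+-} delivers the unitary equivalence of $S_\mub$ on $\tcal_{\mathbb{1}}$ with $S_{[\mub]}$ on $\tcal_{\langle\mub\rangle}$ via $U$. I expect this weight bookkeeping to be the only real obstacle; once it is settled, everything else is formal transport through $U$.

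For the remaining claim I would transport the basis vectors. Since $Ue_u$ is supported at $u$, it equals $c_u e_u$ with $c_u=[\mu]_{\koo|u}/\mu_{\koo|u}\neq0$. In $\tcal_{\mathbb{1}}$ one has $e_u\in\dz{S_\mub}$, as $\|S_\mub e_u\|^2_{\mathbb{1}}=\sum_{v\in\dzii(u)}|\mu_v|^2$ equals $\mu_{[w]}$ for any $w\in\dzii(u)$, which is finite by hypothesis, and $\dzii(u)\neq\varnothing$ because $\tcal$ is leafless. As $U$ maps $\dz{S_\mub}$ onto $\dz{S_{[\mub]}}$, it follows that $e_u\in\dz{S_{[\mub]}}$, and the intertwining relation gives $c_u\,S_{[\mub]}e_u=U S_\mub e_u=\sum_{v\in\dzii(u)}\mu_v c_v e_v$. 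Finally, using $\mu_{\koo|v}=\mu_{\koo|u}\mu_v$ and $[\mu]_{\koo|v}=[\mu]_{\koo|u}[\mu]_v$ for $v\in\dzii(u)$, a direct computation reduces $\mu_v c_v/c_u$ to $[\mu]_v=|\mu_v|^2/\mu_{[v]}$, which is exactly the asserted formula $S_{[\mub]}e_u=\sum_{v\in\dzii(u)}\frac{|\mu_v|^2}{\mu_{[v]}}e_v$.
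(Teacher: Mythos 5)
Your proposal is correct and follows essentially the same route as the paper, whose entire proof is to invoke Proposition~\ref{thm-unit+-} (with $\lambdab=[\mub]$) together with a citation of \cite[Proposition 3.1.3]{2012-j-j-s-mams}. You simply make explicit the two pieces the paper leaves to the reader or to the cited reference: the bookkeeping identity $[\mu]_{\koo|v}=|\mu_{\koo|v}|^2/\mu_{[\koo|v]}$ showing that the weight family produced by Proposition~\ref{thm-unit+-} is exactly $\langle\mub\rangle$, and the derivation of $S_{[\mub]}e_u=\sum_{v\in\dzii(u)}\frac{|\mu_v|^2}{\mu_{[v]}}e_v$ by transporting $e_u$ through the unitary $U$ instead of quoting the external proposition.
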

\begin{proof}
Apply Lemma \ref{thm-unit+-} and \cite[Proposition 3.1.3]{2012-j-j-s-mams}.
\end{proof}
In view of the above, when investigating weighted shifts $S_\mu$ on $\tcal_{\mathbb{1}}$ we can always pass on to $S_{[\mub]}$ on $\tcal_{\langle\mub\rangle}$ whenever a property we are interested in is invariant under unitary equivalence. Note that, assuming $S_\mub$ is bounded, the families $\lambdab:=[\mub]$ and $\betab:=\langle\mub\rangle$ satisfy \eqref{stand3}. This enables us to apply most of the results of previous sections.
\begin{rem}\label{unitarne}
As regards Proposition \ref{unilat} it is worth also to notice that any weighted shift $S_{\lambdab}$ on a weighted directed tree $\tcal_{\betab}$ is unitarily equivalent to a weighted shift $S_\mub$ on a directed tree $\tcal_{\mathbb{1}}$. The equivalence is given by the operator $W \colon \ell^2(\betab) \to \ell^2(\mathbb{1})$, defined by
\begin{align*}
\big(W f\big)(u) =
   \sqrt{\beta_u} f(u),\quad u\in V,\ f\in \ell^2(\betab),
\end{align*}
whence the formula for weights $\mub$ is
\begin{align*}
\mub_v = \sqrt{\frac{\beta_v}{\beta_{\pa{v}}}}\, \lambda_v, \quad v \in V^\circ.
\end{align*}
\end{rem}
Now we observe that, keeping the notation from Proposition \ref{thm-unit+-}, the multiplication operators $M_{\hat\varphi}^{\lambdab,\betab}$ and $M_{\hat\varphi}^{\mub,\mathbb{1}}$ related to $\slam$ and $S_\mub$, respectively, are unitarily equivalent via the operator $U$.
\begin{prop}\label{multikulti}
Under the assumptions of Proposition \ref{thm-unit+-} the following condition is satisfied
\begin{align*}
\Big(\Upsilon\, \Gamma_{\hat\varphi}^\lambdab f\Big)(v)= \Big(\Gamma_{\hat\varphi}^\mub \Upsilon f\Big)(v),\quad v\in V,\ f\in \C^V,
\end{align*}
where $\Upsilon \colon \C^V \to \C^V$ be given by
\begin{align*}
\big(\Upsilon f\big)(u) =
   \frac{\mu_{\koo|u}}{\lambda_{\koo|u}}\, f(u),\quad u\in V,\ f\in \C^V.
\end{align*}
Moreover, $\Upsilon$ induces operator $U^* \colon \ell^2(\betab) \to \ell^2(\mathbb{1})$ and
\begin{align*}
U^* M_{\hat\varphi}^{\lambdab,\betab} = M_{\hat\varphi}^{\mub,\mathbb{1}} U^*.
\end{align*}
\end{prop}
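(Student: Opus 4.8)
The plan is to prove the pointwise intertwining identity first, since the two remaining assertions follow from it together with the unitarity of $U$ already established in Proposition \ref{thm-unit+-}. First I would expand both sides using the definitions of $\varGamma_{\hat\varphi}^\lambdab$ and $\Upsilon$. Writing
\[
\big(\Upsilon\, \varGamma_{\hat\varphi}^\lambdab f\big)(v) = \frac{\mu_{\koo|v}}{\lambda_{\koo|v}}\sum_{k=0}^{|v|}\lambda_{\paa^k(v)|v}\,\hat\varphi(k)\,f(\paa^k(v)),
\]
\[
\big(\varGamma_{\hat\varphi}^\mub\, \Upsilon f\big)(v) = \sum_{k=0}^{|v|}\mu_{\paa^k(v)|v}\,\hat\varphi(k)\,\frac{\mu_{\koo|\paa^k(v)}}{\lambda_{\koo|\paa^k(v)}}\,f(\paa^k(v)),
\]
I see that it suffices to match, for each $v\in V$ and each $k\in\{0,\dots,|v|\}$, the coefficient of $\hat\varphi(k)f(\paa^k(v))$; that is, to verify
\[
\frac{\mu_{\koo|v}}{\lambda_{\koo|v}}\,\lambda_{\paa^k(v)|v} = \mu_{\paa^k(v)|v}\,\frac{\mu_{\koo|\paa^k(v)}}{\lambda_{\koo|\paa^k(v)}}.
\]

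The key ingredient is a factorization (cocycle) property of the consecutive-weight products: for every $v\in V$ and every $k\in\{0,\dots,|v|\}$,
\[
\lambda_{\koo|v}=\lambda_{\koo|\paa^k(v)}\,\lambda_{\paa^k(v)|v}, \qquad \mu_{\koo|v}=\mu_{\koo|\paa^k(v)}\,\mu_{\paa^k(v)|v}.
\]
This is immediate from the definition of $\lambda_{u|v}$ (resp.\ $\mu_{u|v}$): the product $\lambda_{\koo|v}=\prod_{j=0}^{|v|-1}\lambda_{\paa^j(v)}$ splits at the intermediate vertex $\paa^k(v)$ into the product over $j<k$, which is $\lambda_{\paa^k(v)|v}$, times the product over $j\Ge k$, which after reindexing $i=j-k$ and using $\paa^{i}(\paa^k(v))=\paa^{i+k}(v)$ equals $\lambda_{\koo|\paa^k(v)}$. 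Substituting both factorizations into the left-hand side of the coefficient identity, the factor $\lambda_{\paa^k(v)|v}$ cancels against the same factor in the denominator, leaving exactly the right-hand side. This establishes the first displayed equality of the proposition for all $v\in V$ and $f\in\C^V$.

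Next I would identify $\Upsilon$ restricted to $\ell^2(\betab)$ with $U^*$. Since $U$ is unitary by Proposition \ref{thm-unit+-}, we have $U^*=U^{-1}$; solving $(Uf)(u)=\tfrac{\lambda_{\koo|u}}{\mu_{\koo|u}}f(u)$ for $f$ gives $(U^{-1}g)(u)=\tfrac{\mu_{\koo|u}}{\lambda_{\koo|u}}g(u)=(\Upsilon g)(u)$, so $\Upsilon$ maps $\ell^2(\betab)$ unitarily onto $\ell^2(\mathbb{1})$ and coincides there with $U^*$.

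Finally I would lift the pointwise identity to the operator identity. Because $U^*=\Upsilon|_{\ell^2(\betab)}$ is unitary, it preserves $\ell^2$-membership between the two spaces; hence for $g\in\ell^2(\betab)$ one has $\varGamma_{\hat\varphi}^\lambdab g\in\ell^2(\betab)$ if and only if $\varGamma_{\hat\varphi}^\mub (U^*g)=U^*\varGamma_{\hat\varphi}^\lambdab g\in\ell^2(\mathbb{1})$. By the definitions of $\dz{M_{\hat\varphi}^{\lambdab,\betab}}$ and $\dz{M_{\hat\varphi}^{\mub,\mathbb{1}}}$ this means $g\in\dz{M_{\hat\varphi}^{\lambdab,\betab}}$ iff $U^*g\in\dz{M_{\hat\varphi}^{\mub,\mathbb{1}}}$, and on these domains $M_{\hat\varphi}^{\mub,\mathbb{1}}U^*g=\varGamma_{\hat\varphi}^\mub U^*g=U^*\varGamma_{\hat\varphi}^\lambdab g=U^*M_{\hat\varphi}^{\lambdab,\betab}g$; thus $U^*M_{\hat\varphi}^{\lambdab,\betab}=M_{\hat\varphi}^{\mub,\mathbb{1}}U^*$. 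I expect the only genuinely delicate point to be the index bookkeeping in the factorization identity together with the accompanying domain bijection; once these are in place, the remainder is formal.
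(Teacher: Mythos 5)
Your proof is correct. In fact the paper states Proposition \ref{multikulti} without any proof at all, so there is no argument to compare against; what you supply — matching coefficients via the factorization $\lambda_{\koo|v}=\lambda_{\koo|\paa^k(v)}\,\lambda_{\paa^k(v)|v}$ (and its analogue for $\mub$), identifying $\Upsilon|_{\ell^2(\betab)}$ with $U^{-1}=U^*$, and transporting domains through the unitary — is exactly the routine verification the authors leave implicit. One cosmetic remark: where $\varGamma_{\hat\varphi}^\lambdab g$ may fail to lie in $\ell^2(\betab)$ you should write $\Upsilon\varGamma_{\hat\varphi}^\lambdab g$ rather than $U^*\varGamma_{\hat\varphi}^\lambdab g$, since $U^*$ is defined only on $\ell^2(\betab)$; the identity $\sum_{u\in V}|h(u)|^2\beta_u=\sum_{u\in V}|(\Upsilon h)(u)|^2$, valid for every $h\in\C^V$ with both sides possibly infinite, is what carries the domain equivalence, and your argument already uses it in substance.
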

By applying Propositions \ref{thm-unit+-} and \ref{multikulti}, we deduce that $\slam$ and $S_\mub$ induce the same multiplier algebra.

\begin{cor}\label{multalg}
Under the assumptions of Proposition \ref{thm-unit+-}, the algebras ${\mathcal{M}(\mub, \tcal_\betab)}$ and ${\mathcal{M}(\lambdab, \tcal_{\mathbb{1}})}$ coincide.
\end{cor}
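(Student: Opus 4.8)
The plan is to show that a symbol $\hat\varphi\colon\N_0\to\C$ lies in $\multib$, the multiplier algebra of $\slam$ on $\tcal_\betab$, exactly when it lies in the multiplier algebra $\mathcal{M}(\tcal_{\mathbb{1}},\mub)$ of $S_\mub$ on $\tcal_{\mathbb{1}}$. Since both algebras are by construction the same kind of object — subsets of $\C^{\N_0}$ equipped with the Cauchy-type multiplication \eqref{cauchymult} — this equality of underlying symbol sets is precisely the asserted coincidence. The entire argument consists of transporting the defining domain condition across the unitary $U$ of Proposition \ref{thm-unit+-}, using the intertwining identity furnished by Proposition \ref{multikulti}.

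First I would recall the two membership criteria: $\hat\varphi\in\multib$ means $\varGamma_{\hat\varphi}^\lambdab f\in\ell^2(\betab)$ for every $f\in\ell^2(\betab)$, while $\hat\varphi\in\mathcal{M}(\tcal_{\mathbb{1}},\mub)$ means $\varGamma_{\hat\varphi}^\mub g\in\ell^2(\mathbb{1})$ for every $g\in\ell^2(\mathbb{1})$. Proposition \ref{multikulti} supplies the pointwise identity $\Upsilon\,\varGamma_{\hat\varphi}^\lambdab=\varGamma_{\hat\varphi}^\mub\,\Upsilon$ on all of $\C^V$, where $\Upsilon$ is multiplication by $\mu_{\koo|u}/\lambda_{\koo|u}$; and by Proposition \ref{thm-unit+-} this same multiplication, read as an operator on $\ell^2(\betab)$, is the unitary $U^*\colon\ell^2(\betab)\to\ell^2(\mathbb{1})$, while multiplication by the reciprocal $\lambda_{\koo|u}/\mu_{\koo|u}$ is the unitary $U\colon\ell^2(\mathbb{1})\to\ell^2(\betab)$.

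The key step is a one-line transport of domains. Given $g\in\ell^2(\mathbb{1})$ set $f=Ug\in\ell^2(\betab)$, so that $\Upsilon f=g$; applying $\Upsilon$ to $\varGamma_{\hat\varphi}^\lambdab f$ and invoking the intertwining identity gives $\Upsilon\,\varGamma_{\hat\varphi}^\lambdab f=\varGamma_{\hat\varphi}^\mub g$. Because $\Upsilon$ restricts to a bijection of $\ell^2(\betab)$ onto $\ell^2(\mathbb{1})$, with inverse the restriction of multiplication by $\lambda_{\koo|u}/\mu_{\koo|u}$, membership in $\ell^2$ is preserved both ways: $\varGamma_{\hat\varphi}^\lambdab f\in\ell^2(\betab)$ holds if and only if $\varGamma_{\hat\varphi}^\mub g\in\ell^2(\mathbb{1})$ holds. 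Hence $f\in\dz{M_{\hat\varphi}^{\lambdab,\betab}}$ precisely when $g\in\dz{M_{\hat\varphi}^{\mub,\mathbb{1}}}$, so that $\dz{M_{\hat\varphi}^{\lambdab,\betab}}=U\big(\dz{M_{\hat\varphi}^{\mub,\mathbb{1}}}\big)$. As $U$ carries $\ell^2(\mathbb{1})$ bijectively onto $\ell^2(\betab)$, one of these domains is the whole space if and only if the other is; this yields $\hat\varphi\in\multib$ if and only if $\hat\varphi\in\mathcal{M}(\tcal_{\mathbb{1}},\mub)$, and the two symbol sets — hence the two Banach algebras — coincide.

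The one delicate point, and the step I would be most careful about, is that $\Upsilon$ is a priori only an algebraic map on $\C^V$, so I may not treat it as a bounded operator; the $\ell^2$-preservation in both directions has to be deduced from the fact that its two opposite restrictions are the genuine unitaries $U$ and $U^*$ of Proposition \ref{thm-unit+-}, not from any boundedness of $\Upsilon$ itself. Once this distinction is observed the proof needs no a priori boundedness of either shift: everywhere-definedness of the multiplication operator is transported directly, and Lemma \ref{mclosed} together with the closed graph theorem then upgrades it to boundedness on each side.
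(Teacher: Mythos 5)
Your proof is correct and follows essentially the same route the paper takes: the paper deduces the corollary precisely by combining the unitary $U$ of Proposition \ref{thm-unit+-} with the intertwining identity $\Upsilon\,\varGamma_{\hat\varphi}^{\lambdab}=\varGamma_{\hat\varphi}^{\mub}\,\Upsilon$ of Proposition \ref{multikulti}, which is exactly your domain-transport argument spelled out in detail (including the correct observation that $\beta_u=|\mu_{\koo|u}/\lambda_{\koo|u}|^2$ makes the restriction of $\Upsilon$ an isometry of $\ell^2(\betab)$ onto $\ell^2(\mathbb{1})$, so everywhere-definedness passes in both directions). Your care in distinguishing the algebraic map $\Upsilon$ on $\C^V$ from its unitary restriction is exactly the right scruple, and nothing in the argument is missing.
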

\begin{rem}\label{smierdzi}
Under the assumptions of Proposition \ref{thm-unit+-}, the operators $M_{\hat\varphi}^{\lambdab,\betab}$ and $M_{\hat\varphi}^{|\lambdab|,\betab}$, where $|\lambdab| = \{ |\lambda_v|\}_{v \in V^\circ}$, are unitarily equivalent.
\end{rem}
As a consequence of Corollary \ref{multalg} and the unitary equivalence between $\slam$ and $S_\mub$ one can easily derive counterparts of all the results of Section \ref{MOsec} for $S_\mub$ and $\mathcal{M}(\mub,\mathbb{1})$. In particular, in view of Proposition \ref{unilat}, we have
\begin{align}\label{thor}
{\mathcal{M}(\mub, \tcal_{\mathbb{1}})}=\mathcal{M}\Big([\mub], \tcal_{\langle\mub\rangle}\Big).
\end{align}
As regards Section \ref{BPEsec} we see that replacing $\betab$ by $\langle\mub\rangle$ in Theorem \ref{altbpe0} we get
\begin{align*}
\bpe{\tcal_{\langle\mub\rangle}}=\bigg\{w\in\C\colon \sum_{v\in V} \Big|\frac{\mu_{\koo|v}}{\mu_{[\koo|v]}}\Big|^2 \,|w|^{2|v|}<\infty\bigg\}.
\end{align*}
Combining this with Proposition \ref{multikulti} and Corollary \ref{multalg} (with $[\mub]$ and $\langle\mub\rangle$ in place of $\lambdab$ and $\betab$, respectively) we deduce the following
\begin{cor}
Under the assumptions of Proposition \ref{thm-unit+-}, if $S_{\mub} \in \bsb(\ell^2(\mathbb{1}))$, then
\begin{align*}
\widetilde\Vsf_w(M_{\hat\varphi}^{\mub,\mathbb{1}} f)= \varphi(w)\,\widetilde\Vsf_w(f), \qquad f\in  \ell^2(\mathbb{1}),  \ \hat\varphi\in \mathcal{M}(\mub,\tcal_{\mathbb{1}}), \ w\in \wn{\bpe{\tcal_{\langle\mub\rangle}}},
\end{align*}
where $\widetilde\Vsf_w\colon \ell^2(\mathbb{1})\to \C$ is given by $\widetilde\Vsf_w =\Vsf_w\circ U$.
\end{cor}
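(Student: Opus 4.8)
The plan is to transport the functional-calculus identity \eqref{multw} from the weighted tree $\tcal_{\langle\mub\rangle}$ back to $\tcal_{\mathbb{1}}$ through the unitary $U$ of Proposition \ref{thm-unit+-}, taken with the substitution $\lambdab:=[\mub]$ and $\betab:=\langle\mub\rangle$. First I would record the admissibility of this substitution: since $S_\mub\in\bsb(\ell^2(\mathbb{1}))$, Proposition \ref{unilat} shows that $S_{[\mub]}$ on $\tcal_{\langle\mub\rangle}$ is the unitarily equivalent image of $S_\mub$, hence bounded, and that the pair $([\mub],\langle\mub\rangle)$ satisfies \eqref{stand3}. Moreover, by \eqref{thor} we have $\mathcal{M}(\mub,\tcal_{\mathbb{1}})=\mathcal{M}([\mub],\tcal_{\langle\mub\rangle})$, so any $\hat\varphi\in\mathcal{M}(\mub,\tcal_{\mathbb{1}})$ is a legitimate symbol for $M^{[\mub],\langle\mub\rangle}_{\hat\varphi}$ and induces the same analytic function $\varphi$ on $\wn{\bpe{\tcal_{\langle\mub\rangle}}}$.

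With this in place, Theorem \ref{main} applied to $([\mub],\langle\mub\rangle)$ gives
\[
\Vsf_w\big(M^{[\mub],\langle\mub\rangle}_{\hat\varphi} g\big)=\varphi(w)\,\Vsf_w(g),\qquad g\in\ell^2(\langle\mub\rangle),\ \hat\varphi\in\mathcal{M}([\mub],\tcal_{\langle\mub\rangle}),\ w\in\wn{\bpe{\tcal_{\langle\mub\rangle}}}.
\]
Next I would invoke Proposition \ref{multikulti} with the same substitution, namely $U^*M^{[\mub],\langle\mub\rangle}_{\hat\varphi}=M^{\mub,\mathbb{1}}_{\hat\varphi}U^*$. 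Since $U$ is unitary ($UU^*=I$ on $\ell^2(\langle\mub\rangle)$ and $U^*U=I$ on $\ell^2(\mathbb{1})$), multiplying on the left by $U$ and on the right by $U$ rearranges this to $M^{[\mub],\langle\mub\rangle}_{\hat\varphi}U=U\,M^{\mub,\mathbb{1}}_{\hat\varphi}$, that is, $U M^{\mub,\mathbb{1}}_{\hat\varphi} f=M^{[\mub],\langle\mub\rangle}_{\hat\varphi}(Uf)$ for every $f\in\ell^2(\mathbb{1})$.

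Finally, recalling $\widetilde\Vsf_w=\Vsf_w\circ U$ and setting $g:=Uf\in\ell^2(\langle\mub\rangle)$, the two facts chain together immediately:
\[
\widetilde\Vsf_w\big(M^{\mub,\mathbb{1}}_{\hat\varphi} f\big)=\Vsf_w\big(U M^{\mub,\mathbb{1}}_{\hat\varphi} f\big)=\Vsf_w\big(M^{[\mub],\langle\mub\rangle}_{\hat\varphi}(Uf)\big)=\varphi(w)\,\Vsf_w(Uf)=\varphi(w)\,\widetilde\Vsf_w(f),
\]
which is exactly the asserted equality. The argument is pure bookkeeping, so there is no genuine analytic obstacle; the one point demanding care is the correct instantiation of the roles in Propositions \ref{thm-unit+-}, \ref{unilat} and \ref{multikulti}, that is, checking that the unitary $U$ hidden in $\widetilde\Vsf_w=\Vsf_w\circ U$ is precisely the one produced by $\lambdab=[\mub]$, $\betab=\langle\mub\rangle$, that \eqref{stand3} really holds for this substituted pair (so Theorem \ref{main} applies), and that $\varphi$ is unambiguous across the identification \eqref{thor}.
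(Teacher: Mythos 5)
Your proposal is correct and follows essentially the same route as the paper: the paper deduces this corollary precisely by applying Theorem \ref{main} to $S_{[\mub]}$ on $\tcal_{\langle\mub\rangle}$ (legitimate since $[\mub]$ and $\langle\mub\rangle$ satisfy \eqref{stand3} when $S_\mub$ is bounded) and transporting the identity through the unitary $U$ via Proposition \ref{multikulti} and Corollary \ref{multalg} (i.e.\ \eqref{thor}), with $[\mub]$ and $\langle\mub\rangle$ in place of $\lambdab$ and $\betab$. Your write-up merely makes explicit the intertwining rearrangement $UM_{\hat\varphi}^{\mub,\mathbb{1}}=M_{\hat\varphi}^{[\mub],\langle\mub\rangle}U$ and the verification of \eqref{stand3}, which the paper leaves to the reader.
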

Using \eqref{thor} and Proposition \ref{inkluzja} we get the following.
\begin{cor}
Under the assumptions of Proposition \ref{thm-unit+-}, if $S_{\mub} \in \bsb(\ell^2(\mathbb{1}))$, then
\begin{align*}
\varphi\Big(\wn{\bpe{\tcal_{\langle\mub\rangle}}}\Big)^*\subseteq \sigma_p\bigg(\Big(M_{\hat\varphi}^{\mub,\mathbb{1}}\Big)^*\bigg), \quad \hat\varphi\in \mathcal{M}(\mub,\tcal_{\mathbb{1}}).
\end{align*}
\end{cor}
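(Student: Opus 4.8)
The plan is to deduce this corollary from Proposition~\ref{inkluzja} by transporting everything through the unitary equivalence of Proposition~\ref{multikulti}, so that no new estimate is needed; the whole content lies in matching the data of the earlier results to the canonical choice $\lambdab := [\mub]$, $\betab := \langle\mub\rangle$.

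First I would check that this choice is legitimate. Boundedness of $S_\mub$ forces, via Lemma~\ref{norma_tk} with $\betab = \mathbb{1}$, the finiteness $\mu_{[v]} < \infty$ needed for Proposition~\ref{unilat}, and, as noted just before Remark~\ref{unitarne}, the families $[\mub]$ and $\langle\mub\rangle$ then satisfy \eqref{stand3}. A short bookkeeping computation, resting on the identity $[\mu]_{\koo|v}\,\mu_{[\koo|v]} = |\mu_{\koo|v}|^2$, shows that $\langle\mub\rangle$ is precisely the weight family that Proposition~\ref{thm-unit+-} manufactures out of $\lambdab = [\mub]$; in particular the function $\varphi$ in the statement is exactly the power series $\sum_n \hat\varphi(n)\, w^n$ on $\wn{\bpe{\tcal_{\langle\mub\rangle}}}$ that Proposition~\ref{inkluzja} associates to the pair $([\mub],\langle\mub\rangle)$.

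With this matching in place the argument is immediate. By \eqref{thor} we have $\hat\varphi \in \mathcal{M}(\mub,\tcal_{\mathbb{1}}) = \mathcal{M}([\mub],\tcal_{\langle\mub\rangle})$, so Proposition~\ref{inkluzja}, read with $\lambdab = [\mub]$ and $\betab = \langle\mub\rangle$, gives
\begin{align*}
\varphi\Big(\wn{\bpe{\tcal_{\langle\mub\rangle}}}\Big)^* \subseteq \sigma_p\Big(\big(M_{\hat\varphi}^{[\mub],\langle\mub\rangle}\big)^*\Big).
\end{align*}
Finally, Proposition~\ref{multikulti} supplies a unitary operator intertwining $M_{\hat\varphi}^{[\mub],\langle\mub\rangle}$ and $M_{\hat\varphi}^{\mub,\mathbb{1}}$; passing to adjoints and using the invariance of the point spectrum under unitary equivalence identifies the two point spectra, and combining this with the displayed inclusion yields the claim.

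I expect no genuine obstacle here: the assertion is a transfer statement, and every ingredient is already available. The one place that rewards care is the identification of $\langle\mub\rangle$ with the weight family induced from $[\mub]$ through Proposition~\ref{thm-unit+-}, since it is this coincidence that simultaneously validates the appeal to \eqref{stand3}, to \eqref{thor}, and to the unitary equivalence of Proposition~\ref{multikulti}.
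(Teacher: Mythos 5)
Your proposal is correct and takes essentially the same route as the paper: the paper derives this corollary from \eqref{thor} together with Proposition~\ref{inkluzja} applied to the pair $\lambdab=[\mub]$, $\betab=\langle\mub\rangle$, with the unitary equivalence of Proposition~\ref{multikulti} (left implicit there, having been invoked for the preceding corollary) identifying $\sigma_p\big((M_{\hat\varphi}^{[\mub],\langle\mub\rangle})^*\big)$ with $\sigma_p\big((M_{\hat\varphi}^{\mub,\mathbb{1}})^*\big)$. Your explicit checks --- that boundedness of $S_\mub$ yields $\mu_{[v]}<\infty$, that $([\mub],\langle\mub\rangle)$ satisfies \eqref{stand3}, and that $\langle\mub\rangle$ is exactly the weight family Proposition~\ref{thm-unit+-} produces from $[\mub]$ via $[\mu]_{\koo|v}\,\mu_{[\koo|v]}=|\mu_{\koo|v}|^2$ --- merely spell out details the paper takes for granted.
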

Regarding Section \ref{PSVPsec} we note that all the results apply to the weighted shift $S_{\mub}$ on $\tcal_{\mathbb{1}}$.

\section*{Acknowledgments}
We would like to express our sincere gratitude to Professors Jan Stochel and Franciszek Hugon Szafraniec for their valuable comments and for bringing some relevant bibliography items to our attention. In particular, we thank for informing us about the very recent paper \cite{cha-tri} on analytic models for left-invertible weighted shifts on directed trees with finite branching index. Moreover, we thank Professor Jan Stochel for pointing out a possibility of extending some of the results of Section \ref{MOsec} to a context of reproducing kernel Hilbert spaces.

The models \cite{cha-tri}, obtained independently, rely on the work \cite{2001-shi} and enable, under additional assumptions, representing a weighted shift on a directed tree as a multiplication operator $M_z$ on a reproducing kernel Hilbert space $\mathcal{H}$ of $E$-valued holomorphic functions on a disc centered at the origin, where $E=\ker \slam^*$.
\bibliographystyle{amsalpha}

\end{document}